\tikzset{string/.style={ultra thick}}
\tikzset{smallstring/.style={thick,scale=0.75,every node/.style={transform shape}}}
\tikzset{
    triple/.style args={[#1] in [#2] in [#3]}{
        #1,preaction={preaction={draw,#3},draw,#2}
    }
}
\tikzset{
    quadruple/.style args={[#1] in [#2] in [#3] in [#4]}{
        #1,preaction={preaction={preaction={draw,#4},draw,#3}, draw,#2}
    }
} 
\tikzset{
	super thick/.style={line width=3pt},
	more thick/.style={line width=1pt},
}
\newcommand{\iV}{}
\definecolor{dark-red}{rgb}{0.7,0.25,0.25}
\definecolor{dark-blue}{rgb}{0.15,0.15,0.55}
\definecolor{medium-blue}{rgb}{0,0,0.65}
\definecolor{DarkGreen}{RGB}{0,150,0}
\newcommand{\googlebooks}[1]{(preview at \href{https://books.google.com/books?id=#1}{google books})}
\newcommand{\numdam}[1]{}
\theoremstyle{plain}
\newtheorem{prop}{Proposition}[section]
\newtheorem{thm}[prop]{Theorem}
\newtheorem{lem}[prop]{Lemma}
\newtheorem{cor}[prop]{Corollary}
\numberwithin{equation}{section}
\theoremstyle{remark}
\newtheorem{example}[prop]{Example}
\newtheorem{remark}[prop]{Remark}
\theoremstyle{definition}
\newtheorem{defn}[prop]{Definition}         
\newcommand{\sslash}{\mathbin{/\mkern-6mu/}}
\DeclareMathOperator{\Ad}{Ad}
\DeclareMathOperator{\ev}{ev}
\DeclareMathOperator{\coev}{coev}
\DeclareMathOperator{\Hom}{Hom}
\DeclareMathOperator{\Nat}{Nat}
\DeclareMathOperator{\Irr}{Irr}
\DeclareMathOperator{\mate}{mate}
\DeclareMathOperator{\Spin}{Spin}
\DeclareMathOperator{\Tr}{Tr}
\newcommand{\dslash}[1]{_{{\sslash}#1}}
\newcommand{\id}{\mathbf{1}}
\newcommand{\sVec}{{\mathsf {sVec}}}
\renewcommand{\Vec}{{\mathsf {Vec}}}
\newcommand{\Rep}{{\mathsf {Rep}}}
\def\semicolon{;}
\def\applytolist#1{
    \expandafter\def\csname multi#1\endcsname##1{
        \def\multiack{##1}\ifx\multiack\semicolon
            \def\next{\relax}
        \else
            \csname #1\endcsname{##1}
            \def\next{\csname multi#1\endcsname}
        \fi
        \next}
    \csname multi#1\endcsname}
\def\calc#1{\expandafter\def\csname c#1\endcsname{{\mathcal #1}}}
\def\bbc#1{\expandafter\def\csname bb#1\endcsname{{\mathbb #1}}}
\def\bfc#1{\expandafter\def\csname bf#1\endcsname{{\mathbf #1}}}
\newlength{\L@UnitsRaiseDisplaystyle}
\newlength{\L@UnitsRaiseTextstyle}
\newlength{\L@UnitsRaiseScriptstyle}
\DeclareRobustCommand*{\@UnitsNiceFrac}[3][]{%
  \ifthenelse{\boolean{mmode}}{%
    \settoheight{\L@UnitsRaiseDisplaystyle}{%
      \ensuremath{\displaystyle#1{M}}%
    }%
    \settoheight{\L@UnitsRaiseTextstyle}{%
      \ensuremath{\textstyle#1{M}}%
    }%
    \settoheight{\L@UnitsRaiseScriptstyle}{%
      \ensuremath{\scriptstyle#1{M}}%
    }%
    \settoheight{\@tempdima}{%
      \ensuremath{\scriptscriptstyle#1{M}}%
    }%
    \addtolength{\L@UnitsRaiseDisplaystyle}{%
      -\L@UnitsRaiseScriptstyle%
    }%
    \addtolength{\L@UnitsRaiseTextstyle}{%
      -\L@UnitsRaiseScriptstyle%
    }%
    \addtolength{\L@UnitsRaiseScriptstyle}{-\@tempdima}%
    \mathchoice
      {%
        \raisebox{\L@UnitsRaiseDisplaystyle}{%
          \ensuremath{\scriptstyle#1{#2}}%
        }%
      }%
      {%
        \raisebox{\L@UnitsRaiseTextstyle}{%
          \ensuremath{\scriptstyle#1{#2}}%
        }%
      }%
      {%
        \raisebox{\L@UnitsRaiseScriptstyle}{%
          \ensuremath{\scriptscriptstyle#1{#2}}%
        }%
      }%
      {%
        \raisebox{\L@UnitsRaiseScriptstyle}{%
          \ensuremath{\scriptscriptstyle#1{#2}}%
        }%
      }%
    \mkern-2mu{\sslash}\mkern-1mu%
    \bgroup
      \mathchoice
        {\scriptstyle}%
        {\scriptstyle}%
        {\scriptscriptstyle}%
        {\scriptscriptstyle}%
      #1{#3}%
    \egroup
  }%
  {%
    \settoheight{\L@UnitsRaiseTextstyle}{#1{M}}%
    \settoheight{\@tempdima}{%
      \ensuremath{%
        \mbox{\fontsize\sf@size\z@\selectfont#1{M}}%
      }%
    }%
    \addtolength{\L@UnitsRaiseTextstyle}{-\@tempdima}%
    \raisebox{\L@UnitsRaiseTextstyle}{%
      \ensuremath{%
        \mbox{\fontsize\sf@size\z@\selectfont#1{#2}}%
      }%
    }%
    \ensuremath{\mkern-2mu}{\sslash}\ensuremath{\mkern-1mu}%
    \ensuremath{%
      \mbox{\fontsize\sf@size\z@\selectfont#1{#3}}%
    }%
  }%
}
\DeclareRobustCommand*{\nicefrac}{\@UnitsNiceFrac}%
\begin{document}

\title{Monoidal categories enriched in braided monoidal categories}
\author{Scott Morrison and David Penneys }
\date{}
\begin{abstract}
We introduce the notion of a monoidal category enriched in a braided monoidal category $\cV$. 
We set up the basic theory, and prove a classification result in terms of braided oplax monoidal functors to the Drinfeld center of some monoidal category $\cT$.

Even the basic theory is interesting; it shares many characteristics with the
theory of monoidal categories enriched in a symmetric monoidal
category, but lacks some features. 
Of particular note, there is no cartesian product of braided-enriched categories, and the natural transformations do not form a 2-category, but rather satisfy a braided interchange relation.

Strikingly, our classification is slightly more general than what one might have anticipated in terms of strong monoidal functors $\cV\to Z(\cT)$. 
We would like to understand this further; in a future paper we show that the functor is strong
if and only if the enriched category is `complete' in a certain sense.
Nevertheless it remains to understand what non-complete enriched categories
may look like.

One should think of our construction as a generalization of de-equivariantization, which takes a strong monoidal functor $\Rep(G) \to Z(\cT)$ for some finite group $G$ and a monoidal category $\cT$, and produces a new monoidal category $\cT \dslash G$.
In our setting, given any braided oplax monoidal functor $\cV \to Z(\cT)$, for any braided $\cV$, we produce $\cT \dslash \cV$: this is not usually an `honest' monoidal category, but is instead $\cV$-enriched. 
If $\cV$ has a braided lax monoidal functor to $\Vec$, we can use this to reduce the enrichment to $\Vec$, and this recovers de-equivariantization as a special case.

\end{abstract} 
\maketitle

\section{Introduction}

While the symmetries of classical mathematical objects form groups, the symmetries of  `quantum' mathematical objects such as subfactors and quantum groups form more general objects which are best axiomatized as tensor categories.
In turn, tensor categories have connections to many branches of mathematics, including representation theory, topological and conformal field theory, and quantum information.

Early in the study of monoidal categories, Eilenberg and Kelly defined the notion of a category enriched in a given monoidal category $\cV$ \cite{MR0225841} (see also \cite{MR2177301}).
An ordinary category has objects and hom sets, while a $\cV$-enriched category $\cC$ has objects, and for every $a,b\in \cC$,  an associated hom object $\cC(a\to b)\in \cV$.
The $\cV$-enriched category also comes with distinguished identity elements $j_c\in \cV(1_\cV \to \cC(c\to c))$ for every $c\in \cC$, and a composition morphism $-\circ_\cC - : \cC(a\to b)\cC(b\to c) \to \cC(a\to c)$ for every $a,b,c\in \cC$ which must satisfy certain compatibility and associativity axioms.
From this perspective, we may think of an ordinary category as enriched in the monoidal category $\sf Set$, and a linear category as enriched in the monoidal category $\sf Vec$.

Braided monoidal categories were introduced in \cite{MR1250465}. 
They play an essential role as algebraic ingredients in 3-dimensional quantum topology.
This article introduces the notion of a \emph{monoidal category enriched in a 
braided monoidal category}.
Linear monoidal categories are of course the case when the enriching category $\cV={\sf Vec}$.
The special case when the enriching category $\cV = \sVec$ has received some recent attention \cite{1603.05928,1606.03466,1603.09294}.
We will expand more on related work in \S \ref{sec:RelatedWork} below.

We believe the notion of a monoidal category enriched in a symmetric closed
monoidal category is well-known to experts in the field.
However, the fact that the enriching category need only be braided, not necessarily symmetric, has not been pursued.
The main difficulty in defining the monoidal structure of a $\cV$-monoidal category is the exchange relation from an ordinary monoidal category.
If $f_1\in \cC(a\to b)$, $f_2\in \cC(b\to c)$, $g_1\in \cC(d\to e)$, and $g_2\in \cC(e\to f)$, we have
$$
(f_1\otimes g_1)\circ (f_2\otimes g_2) = (f_1\circ f_2)\otimes (g_1\circ g_2).
$$
Throughout this article, we choose to read composition left to right so that we do not need to change the order of objects: that is, composition is a map $\cC(a \to b)\cC(b \to c) \to \cC(a \to c)$.
Indeed, we see the two morphisms $g_1$ and $f_2$ are transposed in the above relation, which tells us that the enriching monoidal category should be braided.
As enriched categories do not have morphisms, but rather hom objects, we replace the ordinary exchange relation with the following \emph{braided interchange relation}, which we express using string diagrams for morphisms in $\cV$:
\begin{equation}
\label{eq:BraidedInterchange}
\begin{tikzpicture}[baseline=50,smallstring]
\node (a-b) at (0,0) {$\cC(a \to b)$};
\node (d-e) at (2,0) {$\cC(d \to e)$};
\node (b-c) at (4,0) {$\cC(b \to c)$};
\node (e-f) at (6,0) {$\cC(e \to f)$};
\node[draw,rectangle] (t1) at (1,2) {$\quad -\otimes-\quad $};
\draw (a-b) to[in=-90,out=90] (t1.-135);
\draw (d-e) to[in=-90,out=90] (t1.-45);
\node[draw,rectangle] (t2) at (5,2) {$\quad -\otimes-\quad $};
\draw (b-c) to[in=-90,out=90] (t2.-135);
\draw (e-f) to[in=-90,out=90] (t2.-45);
\node[draw,rectangle] (c) at (3,4) {$\quad - \circ - \quad $};
\draw (t1) to[in=-90,out=90] node[left=13pt] {$\cC(ad\to be)$} (c.-135);
\draw (t2) to[in=-90,out=90] node[right=13pt] {$\cC(be\to cf)$} (c.-45);
\node (r) at (3,5.5) {$\cC(ad \to cf)$};
\draw (c) -- (r);
\end{tikzpicture}
=
\begin{tikzpicture}[baseline=50,smallstring]
\node (a-b) at (0,0) {$\cC(a \to b)$};
\node (d-e) at (2,0) {$\cC(d \to e)$};
\node (b-c) at (4,0) {$\cC(b \to c)$};
\node (e-f) at (6,0) {$\cC(e \to f)$};
\node[draw,rectangle] (t1) at (1,2) {$\quad -\circ-\quad $};
\node[draw,rectangle] (t2) at (5,2) {$\quad -\circ-\quad $};
\draw (a-b) to[in=-90,out=90] (t1.-135);
\draw[knot] (b-c) to[in=-90,out=90] (t1.-45);
\draw[knot] (d-e) to[in=-90,out=90] (t2.-135);
\draw (e-f) to[in=-90,out=90] (t2.-45);
\node[draw,rectangle] (c) at (3,4) {$\quad - \otimes - \quad $};
\draw (t1) to[in=-90,out=90] node[left=13pt] {$\cC(a\to c)$} (c.-135);
\draw (t2) to[in=-90,out=90] node[right=13pt] {$\cC(d\to f)$} (c.-45);
\node (r) at (3,5.5) {$\cC(ad \to cf)$};
\draw (c) -- (r);
\end{tikzpicture}
.
\end{equation}
We refer the reader to Section \ref{sec:BasicNotions} for the formal definition of a (strict) $\cV$-monoidal category.

In this article, we classify monoidal categories enriched in $\cV$ in terms of braided oplax monoidal functors from $\cV$ to the Drinfeld center $Z(\cT)$ of an ordinary monoidal category $\cT$.
(The Drinfeld center was introduced in \cite{MR1107651}.)
Recall that a functor $\cF$ is \emph{oplax monoidal} if there is a family of morphisms $\mu_{u,v}:\cF(uv) \to \cF(u)\cF
(v)$, which need not be isomorphisms, but must merely satisfy naturality and associativity conditions.
We call $\cF$ braided oplax monoidal if $\mu$ also intertwines the braidings (see Proposition \ref{prop:Braided}).

Our main result is:
\begin{thm}
\label{thm:Main}
Let $\cV$ be a braided monoidal category.
There is a bijective correspondence 
\[
\left\{\, 
\parbox{5cm}{\rm Rigid $\cV$-monoidal categories $\cC$, such that $x\mapsto \cC(1_\cC\to x)$ admits a left adjoint}\,\left\}
\,\,\,\,\cong\,\,
\left\{\,\parbox{8cm}{\rm Pairs $(\cT,\cF^{\scriptscriptstyle Z})$ with $\cT$ a rigid monoidal category and $\cF^{\scriptscriptstyle Z}: \cV\to Z(\cT)$ braided oplax monoidal, such that $\cF:=\cF^{\scriptscriptstyle Z}\circ R$ admits a right adjoint}\,\right\}.
\right.\right.
\]
\end{thm}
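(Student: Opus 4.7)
The plan is to construct two explicit maps $\Phi$ and $\Psi$ between the two sides of the claimed correspondence and to verify they are mutually inverse (up to the appropriate notion of equivalence). Both directions are built using the stated adjunctions.

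For the map $\Phi$ (from enriched categories to pairs), given a rigid $\cV$-monoidal category $\cC$ with $L \dashv H$ where $H := \cC(1_\cC \to -)$, I take $\cT$ to be the underlying ordinary monoidal category of $\cC$, with hom sets $\cT(a \to b) := \cV(1_\cV \to \cC(a \to b))$, inheriting its monoidal structure from $\cC$. I set $\cF := L$; the tensor and composition morphisms of $\cC$ make $H$ canonically lax monoidal, so $\cF = L$ is canonically oplax monoidal with structure maps $\mu_{v,w}: \cF(vw) \to \cF(v)\cF(w)$. To lift $\cF$ to a braided functor $\cF^{\scriptscriptstyle Z}:\cV \to Z(\cT)$, I construct half-braidings $e_{v,t} : \cF(v) \otimes t \to t \otimes \cF(v)$ in $\cT$ from the braiding of $\cV$ together with the unit and counit of $L \dashv H$; the hexagon axioms for this half-braiding reduce to the braided interchange relation inside $\cC$.

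For the map $\Psi$ (from pairs to enriched categories), given $(\cT, \cF^{\scriptscriptstyle Z})$ with $G$ right adjoint to the underlying functor $\cF: \cV \to \cT$, I take $\cC$ to have the same objects as $\cT$ and hom objects characterized, via the rigidity of $\cT$ and the adjunction, by $\cV(v \to \cC(a \to b)) \cong \cT(\cF(v) \otimes a \to b)$, concretely $\cC(a \to b) := G(b \otimes a^*)$. Composition $\cC(a \to b) \otimes \cC(b \to c) \to \cC(a \to c)$ is built from the oplax structure of $\cF^{\scriptscriptstyle Z}$, the counit of the adjunction, and the evaluation $b \otimes b^* \to 1$ in $\cT$. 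The tensor product morphisms $\cC(a \to b) \otimes \cC(d \to e) \to \cC(ad \to be)$ require the half-braiding of $\cF^{\scriptscriptstyle Z}$ to commute $\cF$-objects past $d$ and $a^*$ inside $\cT$.

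The principal technical obstacle is verifying the braided interchange relation \eqref{eq:BraidedInterchange} on the $\cC$ constructed via $\Psi$. After unpacking both sides in terms of the adjunction and the oplax structure, the difference amounts to a single crossing of two $\cF(v)$-strands inside $\cT$; this crossing is realized by the half-braiding of $\cF^{\scriptscriptstyle Z}$, and the \emph{braided} assumption on $\cF^{\scriptscriptstyle Z}$, which says precisely that this half-braiding matches the braiding of $\cV$ (modulo the oplax structure $\mu$), is exactly what forces \eqref{eq:BraidedInterchange} to hold. Conversely, to check that the $\cF^{\scriptscriptstyle Z}$ produced by $\Phi$ is braided oplax monoidal, one applies the braided interchange in $\cC$ to composites involving $L(v)$ and $L(w)$ and uses the universal property of $L$.

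To conclude, I verify that $\Phi$ and $\Psi$ are mutually inverse. For $(\Psi \circ \Phi)(\cC)$, the underlying objects are those of $\cC$, and the hom objects satisfy $G(b \otimes a^*) \cong \cC(a \to b)$ via the enriched duality built from rigidity; composition and tensor are preserved by construction. For $(\Phi \circ \Psi)(\cT, \cF^{\scriptscriptstyle Z})$, the underlying category is $\cT$, the left adjoint $L$ coincides with $\cF$, and the half-braiding built from the braiding of $\cV$ recovers the one specified by $\cF^{\scriptscriptstyle Z}$. The flexibility of $\cF^{\scriptscriptstyle Z}$ being merely oplax mirrors the corresponding flexibility of $L$; both features leave room for the ``non-complete'' enriched categories alluded to in the abstract.
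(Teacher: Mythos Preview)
Your overall strategy matches the paper's: build $\cT=\cC^\cV$, take $\cF$ to be the left adjoint of $\cC(1_\cC\to-)$, lift it to $Z(\cT)$ via half-braidings, and in the other direction define hom objects via the adjunction and verify braided interchange from the braided oplax condition. The round-trip arguments you sketch are also the ones the paper carries out.

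There is, however, one genuine imprecision in your $\Phi$ direction that you should fix. You say the half-braidings $e_{v,t}$ are built ``from the braiding of $\cV$ together with the unit and counit of $L\dashv H$''. In fact the braiding of $\cV$ plays no role in \emph{constructing} the half-braiding: $e_{c,\cF(v)}$ is the mate of
\[
v \xrightarrow{\eta_v\, j_c} \cC(1\to \cF(v))\,\cC(c\to c) \xrightarrow{-\otimes_\cC-} \cC(c\to \cF(v)c),
\]
which uses only the unit $\eta$ and the \emph{tensor product morphism} of the $\cV$-monoidal category $\cC$. The braiding of $\cV$ enters later, and only once: in proving that the resulting functor $\cF^{\scriptscriptstyle Z}$ is \emph{braided}, i.e.\ that $\cF(\beta_{u,v})\circ\mu_{v,u}=\mu_{u,v}\circ e_{\cF(u),\cF(v)}$. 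That step is where braided interchange in $\cC$ is actually invoked. Naturality and the hexagon for $e$ follow from the (unbraided) axioms for $-\otimes_\cC-$ together with interchange, and invertibility of $e$ uses rigidity. So your bookkeeping of which axiom powers which verification is slightly off.

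A minor point: your convention $\cV(v\to\cC(a\to b))\cong\cT(\cF(v)\otimes a\to b)$ and $\cC(a\to b)=G(b\otimes a^*)$ is the mirror of the paper's $\cT(a\,\cF(v)\to b)$ and $G(a^*b)$. Either works if carried through consistently, but be aware that the side on which $\cF(v)$ sits determines which half-braiding you need in the tensor-product construction.
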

Here, $R: Z(\cT)\to \cT$ is the forgetful functor, and we use the superscript on $\cF^{\scriptscriptstyle Z}$ to distinguish it from $\cF : \cV \to \cT$.
The notion of rigidity for $\cV$-monoidal categories is introduced in Section \ref{sec:rigidity}.

One could dress this theorem up as an equivalence of 2-categories, but we do not pursue this here.
We also work with a strict notion of $\cV$-monoidal category for convenience.

Theorem \ref{thm:Main} thus gives us a powerful tool to construct $\cV$-monoidal categories.
Some examples of strong monoidal functors $\cF^{\scriptscriptstyle Z}: \cV \to Z(\cT)$ as above are explored in detail in \cite[\S3.3]{1607.06041}.
Additional examples of strong monoidal functors include the presence of a full copy of $\sf Fib$ inside $Z(\Ad(E_8))$ (by \cite[Cor.~4.9]{MR1815993}, see also \cite{Cain-ADE}) and a full copy of $\Ad(SU(3)_3)$ inside $Z(\Ad(4442))$ (using the modular data from \cite{1501.07679}, and the classification from \cite{MR3548123}). 
These examples seem very interesting, and we look forward to studying them in detail.

For any braided $\cV$, there is an uninteresting braided oplax monoidal functor $\lambda^{\scriptscriptstyle Z}: \cV \to Z(\Vec) = \Vec$, obtained as a left adjoint of the strong monoidal inclusion $\Vec \to \cV$. 
Under the correspondence, this just interprets $\cC$ as `trivially enriched' in $\cV$: that is, the morphism objects of the resulting $\cV$-monoidal category are still just vector spaces, but thought of as multiples of the identity object in $\cV$.

To proceed from left to right in Theorem \ref{thm:Main}, from a $\cV$-monoidal category $\cC$ we first extract an ordinary monoidal category $\cC^{\cV}$ (enriched in ${\sf Vec}$) by replacing each $\Hom$ object $\cC(a \to b)$ with the vector space $\cV(\id \to \cC(a \to b))$. 
(See Section \ref{sec:UnderlyingTensorCategory} for more details on $\cC^\cV$.)\footnote{
	This is a special case of a more general construction: given a braided \emph{lax} monoidal functor $\cF: \cV \to \cW$,
	we can turn a $\cV$-monoidal category into a $\cW$-monoidal category by applying $\cF$ to each of the $\Hom$ objects.
	See Section \ref{sec:TransportingEnrichment} for more details on this construction.
	The construction of $\cC^{\cV}$ uses the braided lax monoidal functor $\cV(1_\cV \to -)$.
}
We chose the notation $\cC^\cV$ to hint at the idea of taking fixed points, akin to \emph{equivariantization}. 
In particular, if $\cC$ is a monoidal category enriched in the symmetric monoidal category $\Rep(G)$, for $G$ a finite group, this just means that there is an action of $G$ on the morphisms of $\cC$, and $\cC^{\Rep(G)}$ is the subcategory of $G$-invariant morphisms.
We contruct the functor $\cF :\cV \to \cT$ in Section \ref{sec:Mates}, and show that it lifts to the centre, giving $\cF^{\scriptscriptstyle Z}: \cV\to Z(\cT)$, in Section \ref{sec:extracting-oplax}.

To pass from right to left, we use the right adjoint of $\cF$ together with rigidity to define the hom objects of the $\cV$-enriched category $\cT \dslash \cF$.
The category $\cT \dslash \cF$ has the same objects as $\cT$, and the hom objects are determined by the natural isomorphisms
\begin{align*}
\cV(v \to \cT \dslash \cF(a \to b)) \cong \cT(a \cF(v) \to b).
\end{align*}
We describe this construction in full detail in Section \ref{sec:EnrichingT}.
In the case that $\cV$ is semisimple, we get a more explicit description by
\begin{align*}
\cT \dslash \cF(a \to b) \cong \bigoplus_{\text{$v_i$ simple}} \cT(a \cF(v_i) \to b) v_i.
\end{align*}

The notation $\cT \dslash \cF$ is meant to evoke the feeling that the $\cV$-monoidal category is some type of \emph{quotient} of $\cT$ by $\cF$, akin to \emph{de-equivariantization}.
The usual process of de-equivariantisation begins with a Tannakian subcategory, that is a copy of $\Rep(G)$, for $G$ a finite group, inside $Z(\cT)$, for $\cT$
some monoidal category. This can be viewed as a fully faithful braided strong monoidal functor $\cF: \Rep(G) \to Z(\cT)$.
We can factor de-equivariantisation into two steps:  first applying our main theorem to obtain the $\Rep(G)$-monoidal category $\cT \dslash{\cF}$, and second applying the the fibre functor (the underlying vector space) to each $\Hom$ space. 
In this sense our construction is a generalization of de-equivariantization, although when we `quotient out' by $\cV$ inside $Z(\cT)$, there is in general no subsequent `underlying vector space' for the $\Hom$ objects in $\cV$.

\subsection{Related work}
\label{sec:RelatedWork}

As mentioned earlier, we have seen recent interest in monoidal categories enriched in $\cV = \sVec$.
Brundan and Ellis defined a super tensor category in \cite{1603.05928} (see also \cite[\S 6]{Muger-Oxford}), and Usher worked out many basic properties in \cite{1606.03466}.
Usher also indicated some interesting examples (his Example 6.9) which were earlier announced by Walker in the language of \emph{spin planar algebras}.
Recently, \cite{1603.09294} defines the notion of a fermionic modular tensor category as a pre-modular tensor category whose M\"{u}ger center is $\sf sVec$.
This latter condition has also been called `slightly degenerate' in \cite{MR3022755}.
The article \cite{1603.09294} defines a procedure similar to de-equivariantization which produces super tensor categories from fermionic modular tensor categories.

We would like to acknowledge explicitly the work of Kevin Walker on enrichment for 2-categories and higher categories; although much of this is unpublished, we and others in the field have learnt a great deal from his ideas, disseminated in notes, conversations, and seminars.

We also point out that recent work of Henriques, Penneys, and Tener \cite{1607.06041} introduces
the notion of an \emph{anchored planar algebra} internal to a braided
pivotal tensor category, and show that these are equivalent to braided pivotal
strong (not merely oplax) monoidal functors $\cF^{\scriptscriptstyle Z}:\cV \to Z(\cT)$ for some
pivotal tensor category $\cT$ such that $\cF= \cF^{\scriptscriptstyle Z} \circ R$ admits a right adjoint. 
The functor $\cF^{\scriptscriptstyle Z}$ endows $\cT$ with the structure of a
\emph{module tensor category for $\cV$} as studied in \cite{1509.02937}.
See also \cite{1602.02662} for the related notion of a para planar algebra.

In a similar vein, if we interpret a pivotal braided tensor category as a disklike 3-category, one can obtain an analogous classification for its disklike modules.
In a concurrent article, Morrison and Walker study super tensor categories from the
point of view of $\Spin$-disklike 2-categories, in the sense of \cite{MR2978449}. 
That article will also include many examples of categories with objects of small dimension.

Also connected to this theorem is the MathOverflow question \cite{MO:51783}
which discusses the construction of $\cV$-enriched monoidal categories when $\cV$ is symmetric and closed from braided strong monoidal functors to the Drinfeld centers of monoidal categories.
The activity there reinforces our belief that monoidal categories enriched in symmetric closed monoidal categories are probably known to experts.
Interestingly, our theorem only requires the braided central functor be oplax monoidal, and not strong monoidal.
We only need an oplax functor to pass from right to left, and all we recover when passing from left to right is the oplax structure.

All the examples we know about at this point, however, are either actually strong monoidal functors, or left adjoints of strong monoidal functors (e.g. the `trivially enriched' examples discussed above). 
It would be very interesting to have `genuinely' oplax examples. 

\subsection{Future research}

In a subsequent paper, we will characterize strong monoidality of $\cF: \cV \to Z(\cT)$ in terms of $\cV$-completeness of $\cC$.
A $\cV$-monoidal category $\cC$ is \emph{$\cV$-complete} if there is a
$\cV$-monoidal functor $\widehat{\cV} \to \cC$, where $\widehat{\cV}$ is the \emph{self-enrichment} of $\cV$ described in \S \ref{sec:VEnrichedInV}.
This is the appropriate generalisation of $\Pi$-completeness introduced in \cite{1606.03466} for super tensor
categories, and is the analog of $\cC$ being tensored over $\cV$ in the sense of \cite{MR2177301}.
We will explore $\cV$-completeness of $\cV$-monoidal categories in a followup article.
In particular, we will prove that under the bijective correspondence given in Theorem \ref{thm:Main},  $\cV$-complete fusion categories correspond to braided strong monoidal functors $\cV\to Z(\cT)$ for some rigid monoidal category $\cT$.

Moreover, we will discuss the $\cV$-completion of a $\cV$-monoidal category $\cC$, which generalises the $\Pi$-envelope introduced in \cite{1603.05928} for super-tensor categories.
We will prove that a $\cV$-monoidal category $\cC$ is $\cV$-complete if and only if it is $\cV$-equivalent to its $\cV$-completion.

It would be interesting to see if one could weaken the rigidity assumption in Theorem \ref{thm:Main} to the assumption that the monoidal categories are merely closed.
(For example, this could hopefully improve the proof of Lemma \ref{lem:MateOfId - aFv} below, which appears in Appendix \ref{sec:ProofsAdjunctionsAndMates}.)
As we use rigidity for various other purposes, and as \cite{1509.02937,1607.06041} uses pivotal categories, we are content to remain in the rigid world for now.

In another direction, it seems that we use the fact that the braiding in $\cV$ has an inverse rather infrequently.
Perhaps it is possible to generalise the setting throughout to monoidal categories enriched in a category $\cV$ equipped with a lax braiding 
$u v \to v u$ as in \cite{MR2342829,MR2381533}. 
For now, however, we have no application of such a generalisation, so we have not pursued it.

\section{Basic notions}
\label{sec:BasicNotions}

Suppose $\cV$ is a monoidal category.
We suppress all unitors and associators in $\cV$ to ease the notation.
Tensor products are indicated by juxtaposition, that is, omitting all $\otimes$-symbols, while all compositions are written explicitly with $\circ$.
We write composition left-to-right throughout.

Recall from \cite{MR2177301} that a $\cV$-enriched category $\cC$ associates to each pair $a,b\in \cC$ a hom object $\cC(a\to b)\in \cV$.
For each $a\in \cC$, there is a distinguished identity element $j_a\in \cV(1_\cV \to \cC(a\to a)$. 
For each $a,b,c\in\cC$, there is a distinguished composition morphism $-\circ_\cC - \in \cV(\cC(a\to b)\cC(b\to c) \to \cC(a\to c))$.

These data must satisfy the following two axioms. (We have two options for
describing such axioms, either as commutative diagrams or as string diagrams \cite{MR1113284}
in $\cV$. Throughout this introduction we use both, to ensure all readers
find something they are comfortable with; later in the paper we use whichever
is most convenient.)
\begin{itemize}
\item
\textbf{
(identity)
}
For all $a,b\in \cC$, $(j_a \id_{\cC(a\to b)})\circ(-\circ_\cC -) = \id_{\cC
(a\to b)}$ and $(\id_{\cC(a\to b)j_b})\circ (-\circ_\cC -) = \id_{\cC(a\to b)}$:
\begin{equation*}
\begin{tikzcd}[column sep=tiny]
	& \cC(a \to a)\cC(a\to b) \ar[dd, "-\circ_\cC -"] & \\
\cC(a\to b) \ar[ur, "j_a \id_{\cC(a\to b)}"]
\ar[dr, "\id_{\cC(a\to b)}"] \\
	& \cC(a\to b) & 
\end{tikzcd}
\text{ and }\quad
\begin{tikzcd}[column sep=tiny]
	& \cC(a \to b)\cC(b\to b) \ar[dd, "-\circ_\cC -"] & \\
\cC(a\to b) \ar[ur, " \id_{\cC(a\to b)} j_b"]
\ar[dr, "\id_{\cC(a\to b)}"] \\
	& \cC(a\to b) & 
\end{tikzcd}
\end{equation*}
In string diagrams the above axiom reads as:
$$
\begin{tikzpicture}[baseline=25,smallstring]
\node[draw,rectangle] (j) at (0,1) {$j_a$};
\node[] (a-b-bottom) at (2,0) {$\cC(a\to b)$};
\node[draw,rectangle] (circ) at (1,2) {$-\circ_\cC-$};
\node[] (a-b-top) at (1,3) {$\cC(a\to b)$};
\draw (j.90) to[in=-90,out=90] (circ.-135);
\draw (a-b-bottom.90) to[in=-90,out=90] (circ.-45);
\draw (circ.90) to[in=-90,out=90] (a-b-top.270);
\end{tikzpicture}
=
\begin{tikzpicture}[baseline=25,smallstring]
\node[] (a-b-bottom) at (0,0) {$\cC(a\to b)$};
\node[] (a-b-top) at (0,3) {$\cC(a\to b)$};
\draw (a-b-bottom.90) to[in=-90,out=90] (a-b-top.270);
\end{tikzpicture}
=
\begin{tikzpicture}[baseline=25,smallstring]
\node[] (a-b-bottom) at (0,0) {$\cC(a\to b)$};
\node[draw,rectangle] (j) at (2,1) {$j_b$};
\node[draw,rectangle] (circ) at (1,2) {$-\circ_\cC-$};
\node[] (a-b-top) at (1,3) {$\cC(a\to b)$};
\draw (j.90) to[in=-90,out=90] (circ.-45);
\draw (a-b-bottom.90) to[in=-90,out=90] (circ.-135);
\draw (circ.90) to[in=-90,out=90] (a-b-top.270);
\end{tikzpicture}
.
$$
\item
\textbf{
(associativity)
}
For all $a,b,c,d\in\cC$, the following diagram commutes:
\begin{equation*}
\begin{tikzcd}
\cC(a \to b)\cC(b\to c)\cC(c\to d)
\ar[rr, "\,\,\,\,\id(-\circ_\cC -)\,\,\,\,"] 
\ar[d, "(-\circ_\cC -)\id"] 
&&
\cC(a\to b)\cC(b\to d)
\ar[d, "(-\circ_\cC -)"] 
\\
\cC(a\to c)\cC(c\to d)
\ar[rr, "(-\circ_\cC -)"] 
&&
\cC(a\to d)
\end{tikzcd}
\end{equation*}
which in string diagrams becomes:
$$
\begin{tikzpicture}[baseline=30,smallstring]
\node[] (a-b) at (0,0) {$\cC(a\to b)$};
\node[] (b-c) at (2,0) {$\cC(b\to c)$};
\node[] (c-d) at (4,0) {$\cC(c\to d)$};
\node[draw,rectangle] (circ1) at (1,1) {$-\circ_\cC-$};
\node[draw,rectangle] (circ2) at (2,2) {$-\circ_\cC-$};
\node[] (a-d) at (2,3) {$\cC(a\to d)$};
\draw (a-b.90) to[in=-90,out=90] (circ1.-135);
\draw (b-c.90) to[in=-90,out=90] (circ1.-45);
\draw (circ1.90) to[in=-90,out=90] (circ2.-135);
\draw (c-d.90) to[in=-90,out=90] (circ2.-45);
\draw (circ2.90) to[in=-90,out=90] (a-d.-90);
\end{tikzpicture}
=
\begin{tikzpicture}[baseline=30,smallstring]
\node[] (a-b) at (0,0) {$\cC(a\to b)$};
\node[] (b-c) at (2,0) {$\cC(b\to c)$};
\node[] (c-d) at (4,0) {$\cC(c\to d)$};
\node[draw,rectangle] (circ1) at (3,1) {$-\circ_\cC-$};
\node[draw,rectangle] (circ2) at (2,2) {$-\circ_\cC-$};
\node[] (a-d) at (2,3) {$\cC(a\to d)$};
\draw (a-b.90) to[in=-90,out=90] (circ2.-135);
\draw (b-c.90) to[in=-90,out=90] (circ1.-135);
\draw (c-d.90) to[in=-90,out=90] (circ1.-45);
\draw (circ1.90) to[in=-90,out=90] (circ2.-45);
\draw (circ2.90) to[in=-90,out=90] (a-d.-90);
\end{tikzpicture}
.
$$
\end{itemize}

From this point onward, we assume $\cV$ is a braided monoidal category where the braiding in $\cV$ is denoted by $\beta_{u,v} : uv \to vu$ for all $u,v\in \cV$.

\begin{defn}
A (strict)\footnote{It would be wonderful for someone to work out the axioms for a non-strict $\cV$-monoidal category!}
$\cV$-monoidal category $\cC$%
\footnote{
	One can motivate this definition by taking the usual
	notion of a $\cV$-enriched category, at first not using the braiding, then giving $\cA \times \cB$, for $\cA$ and $\cB$
	$\cV$-enriched categories, the structure of a
	$\cV$-enriched category by defining the composition using the braiding in $\cV$ in the inevitable way. 
	After this, the definition above is just the usual definition of a (strict) monoidal category.
	Perhaps someone will prove a coherence theorem for not-necessarily-strict monoidal categories enriched in a braided
	monoidal category, but for now we stay in the strict setting.
}
is a $\cV$-enriched category $\cC$ together with the following data: 
\begin{itemize}
\item
a unit object $1_\cC \in \cC$,
\item
for every $a,b\in \cC$, an object $ab\in\cC$, and
\item
for all $a,b,c,d\in \cC$, a tensor product morphism $-\otimes_\cC - \in \cV(
\cC(a\to c)\cC(b\to d) \to \cC(ab\to cd))$ 
\end{itemize}
which satisfy the following axioms:
\begin{itemize}
\item
(strict unitor for objects)
For all $a\in \cC$, $1_\cC a = a1_\cC = a$.
\item
(strict associator for objects)
For all $a,b,c\in\cC$, $(ab)c=a(bc)$.
\item
(unitality)
For all $a,b\in \cC$, $(j_{1_\cC}\id_{\cC(a\to b)})\circ (-\otimes_\cC-) = \id_{\cC(a\to b)}$, $(\id_{\cC(a\to b)}j_{1_\cC})\circ  (-\otimes_\cC-) = \id_{\cC(a\to b)}$, and
$(j_aj_b)\circ (-\otimes_\cC-) = j_{ab}$.
\item
(associativity of $-\otimes_\cC-$) 
For all $a,b,c,d,e,f\in \cC$, the following diagram commutes:
\begin{equation*}
\begin{tikzcd}
\cC(a \to d)\cC(b\to e)\cC(c\to f)
\ar[rr, "\id(-\otimes_\cC -)"] 
\ar[d, "(-\otimes_\cC -)\id"] 
&&
\cC(a\to d)\cC(bc\to ef)
\ar[d, "(-\otimes_\cC -)"] 
\\
\cC(ab\to de)\cC(c\to f)
\ar[rr, "(-\otimes_\cC -)"] 
&&
\cC(abc\to def).
\end{tikzcd}
\end{equation*}
\item
(braided interchange)
For all $a,b,c,d,e,f\in\cC$, the following diagram commutes:
\begin{equation*}
\begin{tikzcd}
\cC(a \to b) \cC(d \to e) \cC(b \to c) \cC(e \to f) 
\ar[rr, "(-\otimes_\cC -)(-\otimes_\cC -)"] 
\ar[dd, "\id \beta_{\cC(d\to e)\cC(b\to c)} \id"] 
&&
\cC(ad\to be) \cC(be\to cf)
\ar[dr, "-\circ_\cC -"] 
\\
&&&
\cC(ad\to cf).
\\
\cC(a \to b) \cC(b \to c) \cC(d \to e) \cC(e \to f) 
\ar[rr, "(-\circ_\cC -)(-\circ_\cC -)"] 
&&
\cC(a \to c) \cC(d \to f)
\ar[ur, "-\circ_\cC -"] 
\end{tikzcd}
\end{equation*}
The corresponding string diagram for the braided interchange relation was already given in \eqref{eq:BraidedInterchange}.
\end{itemize}
\end{defn}

%
%
%
%
%
%

\subsection{\texorpdfstring{$v$}{v}-graded morphisms}

As the objects of the enriching category $\cV$ do not necessarily have underlying sets, we must be careful when talking about
`morphisms in a $\cV$-enriched category'. 
A \emph{$1_\cV$-graded morphism} from $a$ to $b$ in a $\cV$-monoidal category $\cC$ is a morphism $\id_\cV \to \cC(a \to b)$ of $\cV$,
and a
\emph{$v$-graded morphism}, for $v$ an object of $\cV$, is a morphism $v \to \cC(a \to b)$. 
We can compose (or tensor) a
$u$-graded morphism with a $v$-graded morphism to obtain a $uv$-graded morphism.

A $\id_\cV$-graded morphism $f:\id_\cV \to \cC(a\to b)$ is \emph{invertible}
 if there is a morphism $g: 1_\cV \to \cC(b\to a)$ called an \emph{inverse} such that
the maps
\begin{align*}
\id_\cV \xrightarrow{fg} & \cC(a\to b)\cC(b\to a) \xrightarrow{-\circ_\cC-} \cC(a\to a)
\\
\id_\cV \xrightarrow{gf} & \cC(b\to a)\cC(a\to b) \xrightarrow{-\circ_\cC-} \cC(b\to b)
\end{align*}
are identity elements, i.e., $(fg)\circ(-\circ_\cC -) = j_a$ and $(gf)\circ(-\circ_\cC -)=j_b$. 
Notice that if $f$ is invertible, the usual proof shows its inverse is unique and can be denoted $f^{-1}$:
$$
h = (h j_a)\circ(-\circ_\cC -) = (hfg)\circ(-\circ_\cC-\circ_\cC -) = (j_b g)\circ(-\circ_\cC -) = g.
$$
There are obvious notions of monomorphisms and epimorphisms which we leave to the reader.

\subsection{\texorpdfstring{$\cV$}{V}-functors}

A \emph{$\cV$-functor} $\cF: \cC \to \cD$ between $\cV$-enriched categories is just a function between the objects, and
for each
$a,b\in\cC$,
an element $\cF_{a \to b} \in \cV(\cC(a \to b) \to \cD(\cF(a) \to \cF(b))$, such that
\begin{equation}
\label{eq:CompositionAxiomForVFunctors}
\begin{tikzcd}[column sep=huge]
\cC(a \to b) \cC(b \to c)
	\ar[d, "\cF_{a \to b} \cF_{b \to c}"]
	\ar[r, "-\circ_\cC-"] 
&
\cC(a \to c)
	\ar[d, "\cF_{a \to c}"] 
\\
\cD(\cF(a) \to \cF(b)) \cD(\cF(b) \to \cF(c))
	\ar[r, "-\circ_{\cD}-"] 
&
\cD(\cF(a) \to \cF(c))
\end{tikzcd}
\end{equation}
commutes, as does
\begin{equation}
\label{eq:TriangleIdentityAxiomForVFunctors}
\begin{tikzcd}[column sep=tiny]
	& \cC(a \to a) \ar[dd, "\cF_{a \to a}"] & \\
1_\cV \ar[ur, "j^\cC_a"]
\ar[dr, "j^\cD_{\cF(a)}"] \\
	& \cD(\cF(a) \to \cF(a)) & 
\end{tikzcd}
.
\end{equation}
Given $\cV$-functors $\cF:\cC \to \cD$ and $\cG:\cD \to \cE$, we define the $\cV$-functor $\cF\circ \cG : \cC \to \cE$ by $(\cF \circ \cG)_{a\to b} = \cF_{a\to b} \circ \cG_{\cF(a) \to \cF(b)}$ for all $a,b\in \cC$.

A
\emph{$v$-graded natural transformation} $\lambda$ (again, for $v\in \cV$) between $\cV$-functors $\cF, \cG: \cC
\to
\cD$ is
a collection $\lambda_a: v \to
\cD(\cF(a) \to \cG(a))$ so that the following diagram commutes for all objects $a, b$:
$$
\begin{tikzcd}
v \cC(a \to b) \ar[r, "\lambda_a \cG_{a \to b}"] \ar[dd, "\beta"]
	& \cD(\cF(a) \to \cG(a)) \cD(\cG(a) \to \cG(b)) \ar[dr, "-\circ_\cD-"]
	& \\
 & & \cD(\cF(a) \to \cG(b)). \\
\cC(a \to b) v \ar[r, "\cF_{a \to b} \lambda_b"]
	& \cD(\cF(a) \to \cF(b)) \cD(\cF(b) \to \cG(b)) \ar[ur, "-\circ_\cD-"]
	&
\end{tikzcd}
$$
(This is just `the naturality square, viewed from outside'.) We write $\lambda: \cF \Rightarrow \cG$.


Notice that we are only talking about natural transformations for $\cV$-enriched categories rather than $\cV$-monoidal categories, and yet the braiding in $\cV$ is essential to the definition! 

\begin{lem}
\label{lem:VerticalComposition}

Suppose $\lambda: \cF \Rightarrow \cG$ is a $u$-graded natural transformation, and  $\mu: \cG \Rightarrow \cH$ is a $v$-graded natural transformation. Then there is a $uv$-graded natural transformation $\lambda \circ \mu: \cF \Rightarrow \cH$, called the vertical composite, defined by
\begin{align*}
(\lambda \circ \mu)_a :
	uv
	  \xrightarrow{\lambda_a \mu_a}
  & \cD(\cF(a) \to \cG(a)) \cD(\cG(a) \to \cH(a))) \\
  	  \xrightarrow{-\circ_\cD-}
  & \cD(\cF(a) \to \cH(a)).
\end{align*}
Vertical composition is associative.
\end{lem}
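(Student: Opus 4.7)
The plan is to verify the naturality square for the proposed vertical composite and then check associativity; both reduce to diagram chases, and the only subtle point is a single application of the hexagon axiom.

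For naturality of $\lambda \circ \mu$, I would start from $uv\,\cC(a \to b)$ and unfold the definition of $(\lambda \circ \mu)_a$. The top-right composite of the claimed naturality square then consists of $\lambda_a \mu_a \cH_{a\to b}$ followed by two applications of $-\circ_\cD-$. Using associativity of $-\circ_\cD-$, I regroup so that $\mu_a$ is composed with $\cH_{a \to b}$ first; the $v$-graded naturality square for $\mu$ then lets me replace this inner block by the path that first applies $\id_u \beta_{v,\cC(a \to b)}$ and then $\lambda_a \cG_{a \to b} \mu_b$ followed by composition. A second regrouping via associativity of $-\circ_\cD-$ produces an inner block of the form $\lambda_a \cG_{a \to b}$ followed by $-\circ_\cD-$; the $u$-graded naturality square for $\lambda$ replaces this by $\beta_{u,\cC(a \to b)}\id_v$ followed by $\cF_{a \to b} \lambda_b$. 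At this stage the picture has become $\cF_{a \to b}\lambda_b \mu_b$ followed by the triple composition in $\cD$, preceded by the two braidings
\[
(\beta_{u,\cC(a \to b)}\id_v)\circ(\id_u \beta_{v,\cC(a\to b)}),
\]
which the hexagon axiom identifies with $\beta_{uv,\cC(a \to b)}$. This is precisely the bottom-left composite of the naturality square for $\lambda \circ \mu$, so the square commutes.

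Associativity of vertical composition is immediate. Given a third natural transformation $\nu : \cH \Rightarrow \cJ$ of grading $w$, the two iterated composites $((\lambda \circ \mu) \circ \nu)_a$ and $(\lambda \circ (\mu \circ \nu))_a$ both expand, using associativity of $-\circ_\cD-$, to $\lambda_a \mu_a \nu_a$ followed by the triple composition $\cD(\cF(a) \to \cG(a))\,\cD(\cG(a) \to \cH(a))\,\cD(\cH(a) \to \cJ(a)) \to \cD(\cF(a) \to \cJ(a))$.

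I expect the naturality verification to be the only substantive step, and within it the only piece of genuine content is the hexagon identity at the very end; the rest is bookkeeping with the two given naturality squares and the associativity of $-\circ_\cD-$. The argument goes through cleanly precisely because the braiding on a tensor product $uv$ decomposes into the separate braidings on $u$ and on $v$.
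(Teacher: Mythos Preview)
Your proof is correct and is precisely the exercise the paper leaves to the reader; the paper gives no proof of this lemma beyond declaring it straightforward. One small notational quibble: you write the composite of the two braidings as $(\beta_{u,\cC(a \to b)}\id_v)\circ(\id_u \beta_{v,\cC(a\to b)})$, apparently in the standard right-to-left convention, whereas the paper writes composition left-to-right throughout---so in the paper's notation this should read $(\id_u \beta_{v,\cC(a\to b)})\circ(\beta_{u,\cC(a \to b)}\id_v)$---but the intended map $uv\,\cC(a\to b)\to \cC(a\to b)\,uv$ and its identification with $\beta_{uv,\cC(a\to b)}$ via the hexagon axiom are exactly right.
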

The proof is sufficiently straightforward that we leave it as an exercise.

Similarly, horizontal composition follows the usual formula:

\begin{lem}
\label{lem:HorizontalComposition}
Suppose $\kappa: \cF \Rightarrow \cG$ is a $u$-graded natural transformation where $\cF, \cG: \cC \to \cD$, and 
$\lambda: \cH \Rightarrow \cI$ is $v$-graded where $\cH, \cI : \cD \to \cE$.
The following formula defines a $uv$-graded natural transformation called the horizontal composite:
$\kappa \lambda: \cF \circ \cH \Rightarrow \cG \circ \cI$ by
\begin{align*}
(\kappa \lambda)_a : uv
	  \xrightarrow{\kappa_a \lambda_{\cG(a)}}
  & \cD(\cF(a) \to \cG(a)) \cE(\cH(\cG(a)) \to \cI(\cG(a))) \\
      \xrightarrow{\cH_{\cF(a) \to \cG(a)} \id}
  & \cE(\cH(\cF(a)) \to \cH(\cG(a))) \cE(\cH(\cG(a)) \to \cI(\cG(a))) \\
  	  \xrightarrow{-\circ_\cE-}
  & \cE(\cH(\cF(a)) \to \cI(\cG(a))).
\end{align*}
Again, horizontal composition is associative.
\end{lem}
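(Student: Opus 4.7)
The plan is to verify two things: first, that the stated formula assembles into a well-defined $uv$-graded natural transformation $\cF\circ\cH \Rightarrow \cG\circ\cI$; and second, the associativity of horizontal composition.

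For the naturality check, the goal is to verify that the hexagon analogous to the defining diagram of a $\cV$-natural transformation (with $v$ replaced by $uv$, $\cF$ by $\cF\circ\cH$, and $\cG$ by $\cG\circ\cI$) commutes. I would draw both legs as string diagrams in $\cV$ with inputs $u$, $v$, $\cC(a\to b)$. The top leg, after expanding $(\kappa\lambda)_a$ and post-composing with $(\cG\circ\cI)_{a\to b} = \cG_{a\to b}\circ\cI_{\cG(a)\to\cG(b)}$, involves $\kappa_a$, $\cH_{\cF(a)\to\cG(a)}$, $\lambda_{\cG(a)}$, $\cG_{a\to b}$, $\cI_{\cG(a)\to\cG(b)}$ combined by two instances of $-\circ_\cE-$; the bottom leg begins with $\beta_{uv,\cC(a\to b)}$ and expands via $(\kappa\lambda)_b$ pre-composed with $(\cF\circ\cH)_{a\to b} = \cF_{a\to b}\circ\cH_{\cF(a)\to\cF(b)}$. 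The reduction proceeds by: (a) using naturality of $\kappa$ at $(a,b)$ to slide $\cC(a\to b)$ past $u$; (b) invoking the $\cV$-functor axiom \eqref{eq:CompositionAxiomForVFunctors} for $\cH$ to split $\cH_{\cF(a)\to\cG(b)}$ into $\cH_{\cF(a)\to\cG(a)}$ and $\cH_{\cG(a)\to\cG(b)}$; (c) using naturality of $\lambda$ at the pair $(\cG(a),\cG(b))\in\cD$ to slide $\cH_{\cG(a)\to\cG(b)}$ past $v$; (d) applying the $\cV$-functor axiom for $\cI$ together with associativity of $\circ_\cE$. The two elementary braiding moves produced in (a) and (c) combine, via the hexagon axiom for the braiding in $\cV$, to the required $\beta_{uv,\cC(a\to b)}$.

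For associativity, given a further $w$-graded $\mu:\cJ\Rightarrow\cK$ with $\cJ,\cK:\cE\to\cM$, unfolding both $((\kappa\lambda)\mu)_a$ and $(\kappa(\lambda\mu))_a$ using the formula reduces, after one application of the $\cV$-functor composition axiom for $\cJ$ (to split $\cJ_{\cH(\cF(a))\to\cI(\cG(a))}$) and one use of bifunctoriality of $\otimes$ in $\cV$, to the same triple composite in $\cM$ built from $\kappa_a$, $\lambda_{\cG(a)}$ and $\mu_{\cI(\cG(a))}$. The two bracketings then agree by associativity of $\circ_\cM$.

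The main obstacle is the braiding bookkeeping in the naturality check. The subtlety is that the formula uses $\lambda_{\cG(a)}$ rather than $\lambda_{\cF(a)}$, so naturality of $\lambda$ must be invoked at $(\cG(a),\cG(b))$; consequently the $v$-strand has to braid past the $\cH$-image $\cH_{\cG(a)\to\cG(b)}$, itself sitting over a $\cG_{a\to b}$ applied to $\cC(a\to b)$. Keeping these two moves straight with string diagrams---and verifying via the hexagon axiom that the resulting composite of elementary braidings equals the single required $\beta_{uv,\cC(a\to b)}$---is the only non-routine step.
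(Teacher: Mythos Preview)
Your outline matches the paper's proof: the key moves are associativity of $\circ_\cE$, two applications of the functor axiom \eqref{eq:CompositionAxiomForVFunctors} for $\cH$ (a merge and then a split), and the naturality squares for $\kappa$ and for $\lambda$, with the resulting elementary braidings combining to $\beta_{uv,\cC(a\to b)}$ exactly as you say. Two small corrections: you need a preliminary $\cH$-merge before your step (a) so that $\kappa$'s naturality square (which lives in $\cD$) becomes accessible, and in step (d) no functor axiom for $\cI$ is required---only associativity and the definition $(\cG\circ\cI)_{a\to b}=\cG_{a\to b}\circ\cI_{\cG(a)\to\cG(b)}$ are used, so the functoriality appearing twice is that of $\cH$, not $\cI$.
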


\begin{lem}
\label{lem:BraidedInterchangeForNaturalTransformations}
Natural transformations themselves satisfy a braided interchange.
Given $\cV$-monoidal categories, functors, and natural transformations
$$
\begin{tikzpicture}
\node (C) at (0,0) {$\cC$};
\node (D) at (2,0) {$\cD$};
\node (E) at (4,0) {$\cE$};
\draw[->] (C) to[out=-60,in=-120] node (CD1) {} node[below] (F) {\tiny $\cF$}  (D);
\draw[->] (C) to[out=0,in=-180] node (CD2) {} node[above,xshift=3mm] (G) {\tiny $\cG$}(D);
\draw[->] (C) to[out=60,in=120] node (CD3) {} node[above] (H) {\tiny $\cH$} (D);
\draw[->] (D) to[out=-60,in=-120] node (DE1) {} node[below] (I) {\tiny $\cI$}  (E);
\draw[->] (D) to[out=0,in=-180] node (DE2) {} node[above,xshift=3mm] (J) {\tiny $\cJ$}(E);
\draw[->] (D) to[out=60,in=120] node (DE3) {} node[above] (K) {\tiny $\cK$} (E);
\draw[double,arrows={-stealth}] (CD1) -- node[left] {\tiny $\kappa$} (CD2);
\draw[double,arrows={-stealth}] (CD2) -- node[left] {\tiny $\mu$} (CD3);
\draw[double,arrows={-stealth}] (DE1) -- node[left] {\tiny $\lambda$} (DE2);
\draw[double,arrows={-stealth}] (DE2) -- node[left] {\tiny $\nu$} (DE3);
\end{tikzpicture}
$$
where $\kappa, \lambda,\mu,\nu$ are respectively $u,v,w,x$-graded natural transformations,
we have 
$$ ((\kappa \lambda) \circ (\mu \nu))_a = (\id_u \beta_{v,w} \id_x) \circ ((\kappa \circ \mu) (\lambda \circ \nu))_a : uvwx \to \cE(\cI(\cF(a)) \to \cK(\cH(a))).
$$
\end{lem}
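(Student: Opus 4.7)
The plan is to expand both sides using the formulas from Lemmas~\ref{lem:VerticalComposition} and~\ref{lem:HorizontalComposition}, then transform one into the other by a single application of the naturality square of $\lambda$ evaluated at the morphism $\mu_a$, together with associativity of $-\circ_\cE-$.

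Writing out the LHS, the vertical composite $(\kappa\lambda)\circ(\mu\nu)$ at $a$ unpacks to the composition
\begin{align*}
uvwx \xrightarrow{\kappa_a\,\lambda_{\cG(a)}\,\mu_a\,\nu_{\cH(a)}}
& \cD(\cF(a){\to}\cG(a))\,\cE(\cI(\cG(a)){\to}\cJ(\cG(a)))\,\cD(\cG(a){\to}\cH(a))\,\cE(\cJ(\cH(a)){\to}\cK(\cH(a))) \\
\xrightarrow{\cI \,\id\,\cJ\,\id}
& \cE(\cI(\cF(a)){\to}\cI(\cG(a)))\,\cE(\cI(\cG(a)){\to}\cJ(\cG(a)))\,\cE(\cJ(\cG(a)){\to}\cJ(\cH(a)))\,\cE(\cJ(\cH(a)){\to}\cK(\cH(a)))
\end{align*}
followed by three applications of $-\circ_\cE-$. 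Similarly the RHS, after first applying $\id_u\beta_{v,w}\id_x$ so the graded inputs are reordered to $uwvx$, unpacks with $\kappa_a,\mu_a$ in $\cD$ and $\lambda_{\cH(a)},\nu_{\cH(a)}$ in $\cE$, applying $\cI$ to the composite $\kappa_a\circ_\cD\mu_a$ and then composing in $\cE$.

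The key step is then to argue that these two expressions are equal. After applying associativity of $\circ_\cE$ (to turn the triple composite into nested binary composites) and naturality of $\cI$ with respect to $\circ_\cD$ (equation~\eqref{eq:CompositionAxiomForVFunctors}), one may replace $\cI(\kappa_a)\circ_\cE\cI(\mu_a)$ on the RHS by the single map $\cI(\kappa_a\circ_\cD\mu_a)$ and vice versa. The remaining discrepancy between the two sides lies in the middle slot: the LHS has $\lambda_{\cG(a)}\circ_\cE \cJ(\mu_a)$ while the RHS, after braiding $v$ past $w$, has $\cI(\mu_a)\circ_\cE \lambda_{\cH(a)}$. This is precisely the naturality square of the $v$-graded natural transformation $\lambda:\cI\Rightarrow\cJ$ applied at the $w$-graded morphism $\mu_a:w\to\cD(\cG(a)\to\cH(a))$, which is the diagram in the definition of a $v$-graded natural transformation in the excerpt, with its characteristic braiding $\beta_{v,w}$ on the left column. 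Reading this square in the appropriate direction turns one expression into the other, with the braiding absorbed into the $\id_u\beta_{v,w}\id_x$ on the RHS.

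The expected main obstacle is purely bookkeeping: there are four natural transformations with grades appearing in the interleaved order $uvwx$, evaluated at three different points $a,\cG(a),\cH(a)$, and one must carefully track which $\cV$-functor ($\cI$ or $\cJ$) is being applied where. A clean presentation is probably via string diagrams in $\cV$, drawing each side and moving the $\lambda_{\cG(a)}$ strand past the $\cJ(\mu_a)$ strand using the naturality square; this makes the single braiding $\beta_{v,w}$ on the middle two strands visually manifest. Associativity of $\circ_\cE$ and functoriality of $\cI,\cJ$ with respect to $\circ$ then reduce the rest to a matter of isotopy.
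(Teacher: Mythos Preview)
Your proposal is correct and follows essentially the same route as the paper's proof. The paper also expands via string diagrams, uses functoriality of $\cI$ together with associativity of $-\circ_\cE-$ to reach a fully unpacked form, and then invokes exactly the naturality square of $\lambda$ (at the hom object containing $\mu_a$) to swap $\cI(\mu_a)\circ_\cE\lambda_{\cH(a)}$ for $\lambda_{\cG(a)}\circ_\cE\cJ(\mu_a)$, absorbing the braiding $\beta_{v,w}$ in the process; your identification of this as the single substantive step is spot on.
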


We defer the proofs of Lemma \ref{lem:HorizontalComposition} and Lemma \ref{lem:BraidedInterchangeForNaturalTransformations} to Appendix \ref{sec:NaturalityOfNaturalTransformationComposition}.

\begin{remark}
It would be interesting to show that $\cV$-categories, $\cV$-functors, and $\cV$-natural transformations form a $\cV$-enriched 2-category.
In doing so, one would define a $\Hom$-object $\Nat(\cF\Rightarrow \cG)\in \cV$ for $\cV$-functors $\cF,\cG : \cC\to \cD$.
We could then express vertical composition as a morphism $- \circ_{\Nat} - :  \Nat(\cF \Rightarrow \cG)\Nat(\cG \Rightarrow \cE)
\to \Nat(\cF \Rightarrow \cE)$ and horizontal composition as a morphism $- \otimes_{\Nat} -
: \Nat(\cF \Rightarrow \cG)\Nat(\cH \Rightarrow \cI) \to \Nat(\cF \cI \Rightarrow \cG \cI)$.
One would then prove that these morphims satisfied a braided interchange.
\end{remark}

\subsection{Self-enriched categories}
\label{sec:VEnrichedInV}
Given a braided rigid category $\cV$, we can construct a $\cV$-monoidal category $\widehat{\cV}$ with the same
objects as $\cV$, and $\widehat{\cV}(u \to v) \overset{\text{def}}{=} u^*v$. The composition and tensor product
maps are given by 
\begin{align*}
- \circ_{\widehat{\cV}} - :
{\widehat{\cV}}(u \to v) {\widehat{\cV}}(v \to w) = u^*vv^*w 
	& \xrightarrow{u^* \ev_v w} u^*w = {\widehat{\cV}}(u \to w) \\
\begin{tikzpicture}[baseline,smallstring]
\node[draw,rectangle] (circ) at (0,0) {$ - \circ_{\widehat{\cV}} - $};
\draw (circ) -- +(0,0.6);
\draw (circ.-135) to[in=90,out=-90] +(-0.4,-0.3);
\draw (circ.-45)  to[in=90,out=-90] +( 0.4,-0.3);
\end{tikzpicture} & =
\begin{tikzpicture}[baseline,smallstring]
\draw (-0.6,-0.6) to[in=-90,out=90] (-0.2,0.6);
\draw (0.6,-0.6) to[in=-90,out=90] (0.2,0.6);
\draw (-0.2,-0.6) to[in=90,out=90] (0.2,-0.6);
\end{tikzpicture}
\\
- \otimes_{\widehat{\cV}} - :
{\widehat{\cV}}(u \to v) {\widehat{\cV}}(w \to x) = u^*vw^*x
	& \xrightarrow{\beta^{-1}_{u^*v,w^*}x} w^*u^*vx = {\widehat{\cV}}(uw \to vx) \\
\begin{tikzpicture}[baseline,smallstring]
\node[draw,rectangle] (circ) at (0,0) {$ - \otimes_{\widehat{\cV}} - $};
\draw (circ) -- +(0,0.6);
\draw (circ.-135) to[in=90,out=-90] +(-0.4,-0.3);
\draw (circ.-45)  to[in=90,out=-90] +( 0.4,-0.3);
\end{tikzpicture} & =
\begin{tikzpicture}[baseline,smallstring]
\draw (-0.6,-0.6) -- (-0.2,0.6);
\draw (-0.2,-0.6) -- (0.2,0.6);
\draw[knot] ( 0.2,-0.6) -- (-0.6,0.6);
\draw ( 0.6,-0.6) -- (0.6,0.6);
\end{tikzpicture}
\end{align*}
It is an enjoyable exercise to discover that these satisfy the braided exchange law:
\begin{align*}
\begin{tikzpicture}[baseline=22,x=0.3cm,y=1.25cm,smallstring]
\draw       (0,0) to[in=-90,out=90] (1,1);
\draw       (1,0) to[in=-90,out=90] (2,1);
\draw[knot] (2,0) to[in=-90,out=90] (0,1);
\draw       (3,0) to[in=-90,out=90] (3,1);
\draw       (4,0) to[in=-90,out=90] (5,1);
\draw       (5,0) to[in=-90,out=90] (6,1);
\draw[knot] (6,0) to[in=-90,out=90] (4,1);
\draw       (7,0) to[in=-90,out=90] (7,1);
\draw (0,1) to[in=-90,out=90]  (2,2);
\draw (1,1) to[in=-90,out=90]  (3,2);
\draw (2,1) to[in=90,out=90] (5,1);
\draw (3,1) to[in=90,out=90] (4,1);
\draw (6,1) to[in=-90,out=90]  (4,2);
\draw (7,1) to[in=-90,out=90]  (5,2);
\end{tikzpicture}
& = 
\begin{tikzpicture}[baseline=22,x=0.3cm,y=1cm,smallstring]
\draw       (0,0) --                  (0,1);
\draw       (1,0) --                  (1,1);
\draw       (4,0) to[in=-90,out=90]   (2,1);
\draw       (5,0) to[in=-90,out=90]   (3,1);
\draw[knot] (2,0) to[in=-90,out=90]   (4,1);
\draw[knot] (3,0) to[in=-90,out=90]   (5,1);
\draw       (6,0) --                  (6,1);
\draw       (7,0) --                  (7,1);
\draw       (0,1) to[in=-90,out=90]   (0.5,1.5);
\draw       (3,1) to[in=-90,out=90]   (2.5,1.5);
\draw       (4,1) to[in=-90,out=90]   (4.5,1.5);
\draw       (7,1) to[in=-90,out=90]   (6.5,1.5);
\draw       (1,1) to[in=90,out=90]  (2,1);
\draw       (5,1) to[in=90,out=90]  (6,1);
\draw       (0.5,1.5) to[in=-90,out=90] (2.5,2.5);
\draw       (2.5,1.5) to[in=-90,out=90] (4.5,2.5);
\draw[knot] (4.5,1.5) to[in=-90,out=90] (0.5,2.5);
\draw       (6.5,1.5) to[in=-90,out=90] (6.5,2.5);
\end{tikzpicture}
\,.
\end{align*}
This example is related to the canonical functor $\cV \to Z(\cV)$ when $\cV$ is braided, via our main theorem. 
It is the analogue in the braided rigid setting of the example in \S 1.6 of \cite{MR2177301}.

The category $\sf Tangle$ of (unoriented, framed) tangles is a braided rigid monoidal
category. 
It has a faithful functor from $\sf Braid$, the free braided monoidal category on one object. 
The objects of $\sf Tangle$ and of $\sf Braid$ are just the natural numbers.
We denote the standard generators of the braid group by $\sigma_i$.

We can form $\widehat{\sf Tangle}$, the category of tangles enriched in
itself. This allows us to prove the following useful result.

\begin{lem}
\label{lem:equationsInBraids}
Suppose that an equation of the form
\begin{align*}
\begin{tikzpicture}[baseline=50,smallstring]
\node (a-b) at (0,0) {$\cC(a \to b)$};
\node (d-e) at (2,0) {$\cC(d \to e)$};
\node (b-c) at (4,0) {$\cC(b \to c)$};
\node (e-f) at (6,0) {$\cC(e \to f)$};
\node[draw,rectangle] (t1) at (1,1.5) {$\quad -\otimes_\cC-\quad $};
\draw (a-b) to[in=-90,out=90] (t1.-135);
\draw (d-e) to[in=-90,out=90] (t1.-45);
\node[draw,rectangle] (t2) at (5,1.5) {$\quad -\otimes_\cC-\quad $};
\draw (b-c) to[in=-90,out=90] (t2.-135);
\draw (e-f) to[in=-90,out=90] (t2.-45);
\node[draw,rectangle] (c) at (3,3) {$\quad - \circ_\cC - \quad $};
\draw (t1) to[in=-90,out=90] node[left=13pt] {$\cC(ad \to be)$} (c.-135);
\draw (t2) to[in=-90,out=90] node[right=13pt] {$\cC(be \to cf)$} (c.-45);
\node (r) at (3,3.75) {$\cC(ad \to cf)$};
\draw (c) -- (r);
\node[draw,rectangle] (beta) at (3,-1) {$\qquad\qquad\qquad\qquad \gamma
\qquad\qquad\qquad\qquad$};
\draw (a-b) -- (a-b|-beta.north);
\draw (d-e) -- (d-e|-beta.north);
\draw (b-c) -- (b-c|-beta.north);
\draw (e-f) -- (e-f|-beta.north);
\draw (a-b|-beta.south) -- +(0,-0.2);
\draw (d-e|-beta.south) -- +(0,-0.2);
\draw (b-c|-beta.south) -- +(0,-0.2);
\draw (e-f|-beta.south) -- +(0,-0.2);
\end{tikzpicture}
=
\begin{tikzpicture}[baseline=50,smallstring]
\node (a-b) at (0,0) {$\cC(a \to b)$};
\node (d-e) at (2,0) {$\cC(d \to e)$};
\node (b-c) at (4,0) {$\cC(b \to c)$};
\node (e-f) at (6,0) {$\cC(e \to f)$};
\node[draw,rectangle] (t1) at (1,1.5) {$\quad -\otimes_\cC-\quad $};
\draw (a-b) to[in=-90,out=90] (t1.-135);
\draw (d-e) to[in=-90,out=90] (t1.-45);
\node[draw,rectangle] (t2) at (5,1.5) {$\quad -\otimes_\cC-\quad $};
\draw (b-c) to[in=-90,out=90] (t2.-135);
\draw (e-f) to[in=-90,out=90] (t2.-45);
\node[draw,rectangle] (c) at (3,3) {$\quad - \circ_\cC - \quad $};
\draw (t1) to[in=-90,out=90] node[left=13pt] {$\cC(ad \to be)$} (c.-135);
\draw (t2) to[in=-90,out=90] node[right=13pt] {$\cC(be \to cf)$} (c.-45);
\node (r) at (3,3.75) {$\cC(ad \to cf)$};
\draw (c) -- (r);
\node[draw,rectangle] (beta) at (3,-1) {$\qquad\qquad\qquad\qquad \gamma'
\qquad\qquad\qquad\qquad$};
\draw (a-b) -- (a-b|-beta.north);
\draw (d-e) -- (d-e|-beta.north);
\draw (b-c) -- (b-c|-beta.north);
\draw (e-f) -- (e-f|-beta.north);
\draw (a-b|-beta.south) -- +(0,-0.2);
\draw (d-e|-beta.south) -- +(0,-0.2);
\draw (b-c|-beta.south) -- +(0,-0.2);
\draw (e-f|-beta.south) -- +(0,-0.2);
\end{tikzpicture}
\end{align*}
where $\gamma$ and $\gamma'$ are 4-strand braids with the same underlying
permutation,
holds for all $\cV$-enriched categories, and all choices of objects
$a,b,c,d,e,f$. Then $\gamma = \gamma'$.
\end{lem}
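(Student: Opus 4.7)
The plan is to test the hypothesised identity inside the self-enrichment $\widehat{\sf Tangle}$ introduced just above, and then read off $\gamma$ and $\gamma'$ using the faithfulness of the inclusion $\sf Braid\hookrightarrow \sf Tangle$.

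I set $\cV = \sf Tangle$ and $\cC = \widehat{\sf Tangle}$, and I make a careful choice of the six objects $a,b,c,d,e,f$ among the natural numbers (the objects of $\sf Tangle$) so that each of the four hom-objects $\cC(a\to b), \cC(d\to e), \cC(b\to c), \cC(e\to f)$ is the one-strand object $1$ and the target $\cC(ad\to cf)$ is nontrivial; for instance $a=c=0, b=1$ and $d=e=1, f=0$ gives hom-objects $1,1^*,1^*,1$ and target $\cC(ad\to cf)=1^*\cdot 1$. With these choices, $\gamma$ and $\gamma'$ are literal 4-strand braids acting on four single strands, and the composite at the top of the diagram (the two $-\otimes_{\widehat\cV}-$ morphisms followed by the $-\circ_{\widehat\cV}-$ morphism) becomes a fixed, explicit tangle $M$ built from the specific braidings and evaluation caps given by the formulas of \S \ref{sec:VEnrichedInV}. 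The hypothesised identity specialises to the equality of tangles $M\circ \gamma \;=\; M\circ \gamma'$ in $\sf Tangle$.

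I then exploit the rigidity of $\sf Tangle$ to show that the assignment $\gamma \mapsto M\circ \gamma$ is injective on the set of 4-strand braids with any given underlying permutation $\pi$. The idea is to construct an auxiliary tangle $N$ from coevaluation cups and inverse braidings which undoes the caps and crossings inside $M$; composing both sides of the specialised equation with $N$ yields $\gamma = \gamma'$ as tangles, by zigzag identities in the rigid category $\sf Tangle$. Since the inclusion $\sf Braid\hookrightarrow \sf Tangle$ is faithful, this forces $\gamma = \gamma'$ as braids, as required. The permutation hypothesis is used here only to ensure that both sides of the specialised equation have matching source and target objects, so that the comparison is well-posed: without it, the evaluation caps inside $M$ would pair up different strands on the two sides and the equation would not even type-check.

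The main obstacle is the explicit construction of $N$, together with the verification that $N\circ M$ acts as the identity on the relevant braid group (or at least is left-cancellative on braids of fixed permutation). This is a diagrammatic calculation in $\sf Tangle$ and may in practice be simplified by exploiting the freedom to adjust the six objects; in particular, one can arrange that several of the braidings in the two $-\otimes_{\widehat\cV}-$ morphisms collapse to identities, leaving only a single evaluation cap to invert via rigidity.
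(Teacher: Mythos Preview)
Your overall strategy---specialise to $\widehat{\sf Tangle}$, pick objects so the four hom-objects are single strands, and then cancel the fixed top tangle $M$---is exactly the paper's approach. But there are two execution problems.

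First, your specific object choice is wrong: with $d=e=1$ you get $\widehat{\sf Tangle}(d\to e)=1^*\otimes 1=2$, not a single strand, so the four inputs to $\gamma$ would not all be one strand. The paper takes $a=c=d=f=1$ and $b=e=0$, giving $\widehat{\sf Tangle}(1\to 0)=\widehat{\sf Tangle}(0\to 1)=1$ for all four hom-objects.

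Second, and more importantly, your plan to ``invert a single evaluation cap via rigidity'' does not work: caps are not invertible morphisms in $\sf Tangle$, and the zigzag identities do not produce a left inverse for $\ev$. The paper sidesteps this entirely. With its choice of objects one has $be=0$, so the evaluation inside the composition morphism $-\circ_{\widehat{\sf Tangle}}-$ is $\ev_0=\id$, and the two tensor-product morphisms reduce to $\sigma_1^{-1}$ and $\id_2$ respectively. Thus $M=\sigma_1^{-1}$ is itself a braid, hence invertible, and the hypothesised equation becomes $\gamma\,\sigma_1^{-1}=\gamma'\,\sigma_1^{-1}$ in $\sf Tangle$, giving $\gamma=\gamma'$ immediately. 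There is no need to build an auxiliary $N$ or to worry about left-cancellability of general tangles. The moral: choose the objects so that $M$ lands in the braid group, not merely in tangles.
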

\begin{proof}
We pick $a = c = d = f = 1$, and $b = e = 0$ in $\widehat{\sf Tangle}$.
Then $\widehat{\sf Tangle}(1 \to 0) = \widehat{\sf Tangle}(0 \to 1) = 1$,
and (abbreviating $\sf Tangle$ to $\sf T$)
\begin{align*}
\begin{tikzpicture}[baseline=50,smallstring]
\node (a-b) at (0,0) {$\widehat{\sf T}(1 \to 0)$};
\node (d-e) at (2,0) {$\widehat{\sf T}(1 \to 0)$};
\node (b-c) at (4,0) {$\widehat{\sf T}(0 \to 1)$};
\node (e-f) at (6,0) {$\widehat{\sf T}(0 \to 1)$};
\node[draw,rectangle] (t1) at (1,2) {$\quad -\otimes_\cC-\quad $};
\draw (a-b) to[in=-90,out=90] (t1.-135);
\draw (d-e) to[in=-90,out=90] (t1.-45);
\node[draw,rectangle] (t2) at (5,2) {$\quad -\otimes_\cC-\quad $};
\draw (b-c) to[in=-90,out=90] (t2.-135);
\draw (e-f) to[in=-90,out=90] (t2.-45);
\node[draw,rectangle] (c) at (3,4) {$\quad - \circ_\cC - \quad $};
\draw (t1) to[in=-90,out=90] node[left=13pt] {$\widehat{\sf T}(2 \to 0)$} (c.-135);
\draw (t2) to[in=-90,out=90] node[right=13pt] {$\widehat{\sf T}(0 \to 2)$} (c.-45);
\node (r) at (3,5.5) {$\widehat{\sf T}(2 \to 2)$};
\draw (c) -- (r);
\end{tikzpicture}
& = 
\begin{tikzpicture}[string,baseline]
\draw (-0.6,-0.6) -- (-0.2,0.6);
\draw[knot] (-0.2,-0.6) -- (-0.6,0.6);
\draw ( 0.2,-0.6) -- (0.2,0.6);
\draw ( 0.6,-0.6) -- (0.6,0.6);
\end{tikzpicture}
\end{align*}
Thus the equation reduces to $\gamma \sigma_1^{-1} = \gamma' \sigma_1^{-1}$ in $\sf Tangle$, which is equivalent to $\gamma=\gamma'$.
Since braids map faithfully into tangles, we have the conclusion.
\end{proof}

\subsection{The rotation of a \texorpdfstring{$\cV$}{V}-monoidal category}
\label{sec:RotatingC}

Given a monoidal category $\cC$ enriched in a \emph{symmetric} monoidal category, we can take the opposite composition or the opposite tensor product, obtaining a new enriched monoidal category.
When the enrichment is merely braided, we find that this is not the case.
Nevertheless there is something which we call the $\pi$-rotation of $\cC$, which is formed by simultaneously modifying the composition and the tensor product.
This is another point of departure from the theory for symmetric enrichments.

\begin{lem}
Suppose that $\cC$ is a
$\cV$-monoidal category. 
Consider a new composition and tensor product on $\cC$ given by
$$
\begin{tikzpicture}[smallstring,baseline]
\node[draw,rectangle] (circ) at (0,0) {$ - \circ_\cC' - $};
\draw (circ) -- +(0,0.6);
\draw (circ.-135) to[in=90,out=-90] +(-0.2,-0.3);
\draw (circ.-45)  to[in=90,out=-90] +( 0.2,-0.3);
\end{tikzpicture} 
=
\begin{tikzpicture}[smallstring,baseline=-10]
\node[draw,rectangle] (circ) at (0,0) {$ - \circ_\cC - $};
\draw (circ) -- +(0,0.6);
\node[draw,rectangle] (tau) at (0,-1) {$\quad \beta^k \quad$};
\draw (circ.-135) to[in=90,out=-90] (tau.135);
\draw (circ.-45)  to[in=90,out=-90] (tau.45);
\draw (tau.-135) -- +(0,-0.2);
\draw (tau.-45) -- +(0,-0.2);
\end{tikzpicture} 
\qquad \text{and} \qquad
\begin{tikzpicture}[smallstring,baseline]
\node[draw,rectangle] (circ) at (0,0) {$ - \otimes_\cC' - $};
\draw (circ) -- +(0,0.6);
\draw (circ.-135) to[in=90,out=-90] +(-0.2,-0.3);
\draw (circ.-45)  to[in=90,out=-90] +( 0.2,-0.3);
\end{tikzpicture}
 =
\begin{tikzpicture}[smallstring,baseline=-10]
\node[draw,rectangle] (circ) at (0,0) {$ - \otimes_\cC - $};
\draw (circ) -- +(0,0.6);
\node[draw,rectangle] (tau) at (0,-1) {$\quad \beta^l \quad$};
\draw (circ.-135) to[in=90,out=-90] (tau.135);
\draw (circ.-45)  to[in=90,out=-90] (tau.45);
\draw (tau.-135) -- +(0,-0.2);
\draw (tau.-45) -- +(0,-0.2);
\end{tikzpicture}
$$
for some $k,l\in \bbZ$.
These always satisfy associativity, but satisfy the braided interchange
axiom if and only if $k=l$.
\end{lem}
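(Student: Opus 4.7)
The plan is to unpack $-\circ_\cC'- = \beta^k \cdot (-\circ_\cC-)$ and $-\otimes_\cC'- = \beta^l \cdot (-\otimes_\cC-)$, push every braid factor to the bottom of the string diagram using naturality of the braiding in $\cV$, and reduce the remaining comparison to an equation of braids via Lemma~\ref{lem:equationsInBraids}.

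For associativity of $-\circ_\cC'-$, I expand both sides of the associativity square in terms of the original $-\circ_\cC-$ together with two copies of $\beta^k$ acting on pairs of the three input strands. Using naturality of the braiding, I slide each inner $\beta^k$ past the subsequent $-\circ_\cC-$, so that each side of the diagram factors as an auxiliary 3-strand braid followed by the associativity diagram for $-\circ_\cC-$. The latter commutes by assumption, and a routine naturality calculation shows that the two resulting auxiliary 3-strand braids coincide. An identical argument with $\beta^l$ in place of $\beta^k$ establishes associativity of $-\otimes_\cC'-$.

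For the braided interchange, the same strategy expresses the putative new interchange as an equation of the form $\gamma \cdot (\text{original LHS}) = \gamma' \cdot (\text{original RHS})$ for certain 4-strand braids $\gamma, \gamma'$ built from the $\beta^k$'s and $\beta^l$'s. Since the original interchange holds, this reduces to $\gamma = \gamma'$ universally, and by Lemma~\ref{lem:equationsInBraids} it suffices to check this as an identity in the braid group $B_4$. Mapping to the abelianization $B_4^{\mathrm{ab}} \cong \bbZ$ (the writhe), each $\beta^l$ placed at an $\otimes$ on a pair of $1$-strand hom objects contributes $l$, while the $\beta^k$ placed at the $\circ$ acting on two $2$-strand intermediate hom objects contributes $4k$ after cabling through the two $\otimes$s; a bookkeeping check yields LHS writhe $2l + 4k$ and RHS writhe $2k + 4l$. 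Equating these forces $k = l$. Conversely, when $k = l$ a direct braid computation shows $\gamma = \gamma'$, and the new interchange follows from the original one.

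The main obstacle is the careful tracking of which pairs of strands each $\beta^k$ and $\beta^l$ acts on after the naturality moves — in particular the cabling widths, which differ between the tensor-then-compose and compose-then-tensor sides — and the verification that $\gamma$ and $\gamma'$ have the same underlying permutation so that Lemma~\ref{lem:equationsInBraids} applies. Once this is in place, the abelianization argument pins down $k = l$ as the necessary and sufficient condition.
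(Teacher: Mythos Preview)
Your overall architecture matches the paper's: both reduce the new braided interchange to an equation of the form in Lemma~\ref{lem:equationsInBraids} by using naturality of the braiding to pull the $\beta^k$ and $\beta^l$ factors past the $\otimes$ and $\circ$ nodes, and then invoke that lemma to turn the question into a pure braid-group identity in $B_4$. The paper makes this explicit: after the reduction, the required identity is
\[
\sigma_1^{\,l}\,\sigma_3^{\,l}\,(\sigma_2\sigma_1\sigma_3\sigma_2)^k
\;=\;
\sigma_2\,\sigma_1^{\,k}\,\sigma_3^{\,k}\,(\sigma_2\sigma_1\sigma_3\sigma_2)^l\,\sigma_2^{-1}.
\]
Where you and the paper diverge is in how this braid equation is solved. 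The paper applies the Burau representation, reads off specific matrix entries, and obtains polynomial constraints that first narrow the possibilities to $k=l$ or $k=l\pm 1$, then rule out the latter with a second entry. Your proposal instead passes to the abelianisation $B_4 \to \bbZ$ (the writhe), which gives $2l + 4k = 2k + 4l$ and hence $k=l$ in one step. This is a genuine and substantial simplification: the writhe already separates the two sides, so the Burau machinery is unnecessary here. Your informal accounting of the contributions (the two uncabled $\beta^l$'s at the leaves giving $2l$, the $\beta^k$ cabled through two binary nodes giving $4k$, and symmetrically on the other side with the conjugating $\sigma_2$ and $\sigma_2^{-1}$ cancelling) agrees with the explicit braid word above, though you should state that word precisely rather than leave it as ``a bookkeeping check''. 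For sufficiency when $k=l$, both you and the paper defer to a direct isotopy, which is fine. The one place to be careful, as you note, is verifying that the two braids have the same underlying permutation so that Lemma~\ref{lem:equationsInBraids} is applicable; the paper handles this implicitly via the type-checking of the string diagrams, and you should do the same.
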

\begin{proof}
The braided interchange axiom becomes (by pulling composition or tensor
product morphisms through the twists)
$$
\begin{tikzpicture}[smallstring,baseline=50]
\node (bottom-L) at (-1,0) {};
\node (bottom-R) at (1,0) {};
\node[draw,rectangle] (otimes-L) at (-1,3) {$ - \otimes - $};
\node[draw,rectangle] (otimes-R) at (1,3) {$ - \otimes - $};
\draw[double] (bottom-L.90)  to[in=-90,out=90] (otimes-L.-90);
\draw[double] (bottom-R.90)  to[in=-90,out=90] (otimes-R.-90);
\node[draw,rectangle, fill=white] (beta-L) at (-1,1) {$\beta^\ell$};
\node[draw,rectangle, fill=white] (beta-R) at (1,1) {$\beta^\ell$};
\node[draw,rectangle, fill=white] (Delta) at (0,2) {$\qquad\Delta_2(\beta)^k \qquad$};
\node[draw,rectangle] (circ) at (0,4) {$ - \circ - $};
\node (top) at (0,5) {};
\draw (otimes-L.90)  to[in=-90,out=90] (circ.-135);
\draw (otimes-R.90)  to[in=-90,out=90] (circ.-45);
\draw (circ.90)  to[in=-90,out=90] (top.-90);
\end{tikzpicture}
=
\begin{tikzpicture}[smallstring,baseline=50]
\node (bottom-L) at (-1,-1) {};
\node (bottom-R) at (1,-1) {};
\node[draw,rectangle, fill=white] (beta-L) at (-1,1) {$\quad\beta^k\quad$};
\node[draw,rectangle, fill=white] (beta-R) at (1,1) {$\quad\beta^k\quad$};
\node[draw,rectangle, fill=white] (Delta) at (0,2) {$\qquad\Delta_2(\beta)^\ell \qquad$};
\node[draw,rectangle] (otimes-L) at (-1,4) {$ - \otimes - $};
\node[draw,rectangle] (otimes-R) at (1,4) {$ - \otimes - $};
\node[draw,rectangle] (circ) at (0,5) {$ - \circ - $};
\node (top) at (0,6) {};
\draw (bottom-L.135)  to[in=-90,out=90] (beta-L.-135);
\draw (bottom-R.45)  to[in=-90,out=90] (beta-R.-45);
\draw[knot] (bottom-R.135)  to[in=-90,out=90] (beta-L.-45);
\draw[knot] (bottom-L.45)  to[in=-90,out=90] (beta-R.-135);
\draw[double] (beta-L.90)  to[in=-90,out=90] (Delta.-159);
\draw[double] (beta-R.90)  to[in=-90,out=90] (Delta.-21);
\draw[knot] (Delta.140)  to[in=-90,out=90] (otimes-R.-135);
\draw[knot] (Delta.40)  to[in=-90,out=90] (otimes-L.-45);
\draw (Delta.163)  to[in=-90,out=90] (otimes-L.-135);
\draw (Delta.17)  to[in=-90,out=90] (otimes-R.-45);
\draw (otimes-L.90)  to[in=-90,out=90] (circ.-135);
\draw (otimes-R.90)  to[in=-90,out=90] (circ.-45);
\draw (circ.90)  to[in=-90,out=90] (top.-90);
\end{tikzpicture}
$$
where 
$
\Delta_2(\beta)
=
\begin{tikzpicture}[baseline=15, smallstring]
\node (bottom-L) at (0,0) {};
\node (bottom-R) at (1.5,0) {};
\node (top-L) at (0,1.5) {};
\node (top-R) at (1.5,1.5) {};
\draw[double] (bottom-R.90)  to[in=-90,out=90] (top-L.-90);
\draw[super thick, white] (bottom-L.90)  to[in=-90,out=90] (top-R.-90);
\draw[double] (bottom-L.90)  to[in=-90,out=90] (top-R.-90);
\end{tikzpicture}
=
\sigma_2 \sigma_1 \sigma_3 \sigma_2
$ 
denotes the two strand cabling of $\beta$.
By Lemma \ref{lem:equationsInBraids}, we have that
$$
\sigma_1^l \sigma_3^l (\sigma_2 \sigma_1 \sigma_3 \sigma_2)^k 
 =
\sigma_2 \sigma_1^k \sigma_3^k (\sigma_2 \sigma_1 \sigma_3 \sigma_2)^l\sigma_2^{-1}
$$
(recall that the $\sigma_i$ are the standard generators of the braid group).
We then apply the Burau representation to this identity. Looking at the 
first column of the last row of the resulting 4-by-4 matrices, we obtain
$$
\left((-1)^k+(-1)^{l+1}\right) t^{k+l+2}+\left((-1)^k+(-1)^{l+1}\right) t^
{k+l+4}-2 (-1)^k t^{2 k+3}+2 (-1)^l t^{2 l+3} = 0. $$
It is relatively straightforward to see that this polynomial in $t$ only
vanishes identically when $k = l$ or when $k = l \pm 1$.
We now take the second column of the second row, and after setting $k
= l \pm 1$ and clearing a
denominator, obtain
$$
2 (-1)^l (t-1)^2 (t+1) \left(t^2+1\right) t^{2 l \pm 1} = 0
$$
which is impossible. Thus we must have $k=l$, and an isotopy verifies
that the braided interchange axiom indeed holds.
\end{proof}

Note in the above that if the enriching category were symmetric, the integers
$k$ and $l$ would only have appeared modulo 2, and indeed all four choices
would have given new enriched monoidal categories.

\begin{defn}
We define $\cC^\text{rot}$ to be the $\cV$-monoidal category obtained taking
$k=l=1$ in the above lemma.
\end{defn}

Again, note that in the symmetrically enriched case, $(\cC^\text{rot})^{\text
{rot}} = \cC$.
Generally, this is not the case, so we obtain an integer family of rotations of the original category. 
When we discuss rigidity below, we will see that a choice of duality functor is a choice of an isomorphism $\cC \cong \cC^{\text{rot}}$.

\subsection{Products of \texorpdfstring{$\cV$}{V}-monoidal categories (only?) exist when \texorpdfstring{$\cV$}{V} is symmetric}
\label{sec:NoProductsOfVMonoidalCategories}

We now point out a significant difference between the theory of monoidal categories with a braided enrichment, and the
theory of monoidal categories with a symmetric enrichment.

If $\cC$ and $\cD$ are $\cV$-monoidal categories enriched in a symmetric monoidal category $\cV$, we can
define their Cartesian product $\cC \times_\cV \cD$, which is also a $\cV$-monoidal category.
First, we produce the $(\cV\times \cV)$-enriched monoidal category $\cC\times \cD$ by $(\cC\times \cD)((a,c)\to (b, d)) = \cC(a\to b)\cD(c\to d)$, and composition is given by
$$
\begin{tikzpicture}[baseline=30,smallstring]
\node (bottom-L) at (0,0) {$\cC(a\to b) \cD(d\to e)$};
\node (bottom-R) at (4,0) {$\cC(b\to c) \cD(e\to f)$};
\node[draw,rectangle] (circ) at (2,2) {$-\circ_{\cC\times \cD} -$};
\node (top) at (2,3) {$\cC(a\to c) \cD(d\to f)$};
\draw (bottom-L) to[in=-90,out=90] (circ.-135);
\draw (bottom-R) to[in=-90,out=90] (circ.-45);
\draw (circ) to[in=-90,out=90] (top);
\end{tikzpicture}
=
\begin{tikzpicture}[baseline=30,smallstring]
\node (a-b) at (0,0) {$\cC(a\to b)$};
\node (d-e) at (2,0) {$\cD(d\to e)$};
\node (b-c) at (4,0) {$\cC(b\to c)$};
\node (e-f) at (6,0) {$\cD(e\to f)$};
\node[draw,rectangle] (circ-C) at (2,2) {$-\circ_{\cC} -$};
\node[draw,rectangle] (circ-D) at (4,2) {$-\circ_{\cD} -$};
\node (a-c) at (2,3) {$\cC(a\to c)$};
\node (d-f) at (4,3) {$\cD(d\to f)$};
\draw (a-b) to[in=-90,out=90] (circ-C.-135);
\draw (b-c) to[in=-90,out=90] (circ-C.-45);
\draw (d-e) to[in=-90,out=90] (circ-D.-135);
\draw (e-f) to[in=-90,out=90] (circ-D.-45);
\draw (circ-C) to[in=-90,out=90] (a-c);
\draw (circ-D) to[in=-90,out=90] (d-f);
\end{tikzpicture}
,
$$
with a similar formula for tensor product.
We then apply the braided lax monoidal functor $\cM:\cV \times \cV \to \cV$ given by $(u,v) \mapsto uv$ to transport the $\cV\times \cV$ enrichment to $\cV$ as in \ref{sec:TransportingEnrichment}, obtaining the $\cV$-enriched monoidal category $\cC\times_\cV \cD :=\cM_*(\cC\times \cD)$.

We now observe that it is not possible to follow this construction in the setting where $\cV$ is merely braided.
We can still form the $\cV\times \cV$-enriched monoidal category $\cC \times \cD$, but there is no braided lax monoidal functor $\cV\times \cV \to \cV$.

Nevertheless, we can attempt to define a composition and tensor product on $\cC \times \cD$ by the formulas
\begin{align*}
\begin{tikzpicture}[baseline=30,smallstring]
\node (a-b) at (0,0) {$\cC(a\to b)$};
\node (d-e) at (2,0) {$\cD(d\to e)$};
\node (b-c) at (4,0) {$\cC(b\to c)$};
\node (e-f) at (6,0) {$\cD(e\to f)$};
\node[draw,rectangle] (circ) at (3,2) {$\quad-\circ_{p} -\quad$};
\node (a-c) at (2,3) {$\cC(a\to c)$};
\node (d-f) at (4,3) {$\cD(d\to f)$};
\draw (a-b) to[in=-90,out=90] (circ.-150);
\draw (d-e) to[in=-90,out=90] (circ.-120);
\draw (b-c) to[in=-90,out=90] (circ.-60);
\draw (e-f) to[in=-90,out=90] (circ.-30);
\draw (circ.150) to[in=-90,out=90] (a-c);
\draw (circ.30) to[in=-90,out=90] (d-f);
\end{tikzpicture}
&=
\begin{tikzpicture}[baseline=30,smallstring]
\node (a-b) at (0,0) {$\cC(a\to b)$};
\node (d-e) at (2,0) {$\cD(d\to e)$};
\node (b-c) at (4,0) {$\cC(b\to c)$};
\node (e-f) at (6,0) {$\cD(e\to f)$};
\node[draw,rectangle] (p) at (3,2) {$\qquad\qquad p\qquad\qquad$};
\node[draw,rectangle] (circ-C) at (2,3) {$-\circ_{\cC} -$};
\node[draw,rectangle] (circ-D) at (4,3) {$-\circ_{\cD} -$};
\node (a-c) at (2,4) {$\cC(a\to c)$};
\node (d-f) at (4,4) {$\cD(d\to f)$};
\draw (a-b) to[in=-90,out=90] (p.-168);
\draw[knot] (b-c) to[in=-90,out=90] (p.-150);
\draw[knot] (d-e) to[in=-90,out=90] (p.-30);
\draw (e-f) to[in=-90,out=90] (p.-12);
\draw (p.168) to[in=-90,out=90] (circ-C.-135);
\draw (p.150) to[in=-90,out=90] (circ-C.-45);
\draw (p.30) to[in=-90,out=90] (circ-D.-135);
\draw (p.12) to[in=-90,out=90] (circ-D.-45);
\draw (circ-C) to[in=-90,out=90] (a-c);
\draw (circ-D) to[in=-90,out=90] (d-f);
\end{tikzpicture}
\\
\begin{tikzpicture}[baseline=30,smallstring]
\node (a-b) at (0,0) {$\cC(a\to b)$};
\node (e-f) at (2,0) {$\cD(e\to f)$};
\node (c-d) at (4,0) {$\cC(c\to d)$};
\node (g-h) at (6,0) {$\cD(g\to h)$};
\node[draw,rectangle] (circ) at (3,2) {$\quad-\otimes_{q} -\quad$};
\node (a-c) at (1.8,3) {$\cC(ac\to bd)$};
\node (d-f) at (4.2,3) {$\cD(eg\to fh)$};
\draw (a-b) to[in=-90,out=90] (circ.-150);
\draw (e-f) to[in=-90,out=90] (circ.-120);
\draw (c-d) to[in=-90,out=90] (circ.-60);
\draw (g-h) to[in=-90,out=90] (circ.-30);
\draw (circ.150) to[in=-90,out=90] (a-c);
\draw (circ.30) to[in=-90,out=90] (d-f);
\end{tikzpicture}
&=
\begin{tikzpicture}[baseline=30,smallstring]
\node (a-b) at (0,0) {$\cC(a\to b)$};
\node (e-f) at (2,0) {$\cD(e\to f)$};
\node (c-d) at (4,0) {$\cC(c\to d)$};
\node (g-h) at (6,0) {$\cD(g\to h)$};
\node[draw,rectangle] (p) at (3,2) {$\qquad\qquad q\qquad\qquad$};
\node[draw,rectangle] (otimes-C) at (2,3) {$-\otimes_{\cC} -$};
\node[draw,rectangle] (otimes-D) at (4,3) {$-\otimes_{\cD} -$};
\node (a-c) at (1.8,4) {$\cC(ac\to bd)$};
\node (d-f) at (4.2,4) {$\cD(eg\to fh)$};
\draw (a-b) to[in=-90,out=90] (p.-168);
\draw[knot] (c-d) to[in=-90,out=90] (p.-150);
\draw[knot] (e-f) to[in=-90,out=90] (p.-30);
\draw (g-h) to[in=-90,out=90] (p.-12);
\draw (p.168) to[in=-90,out=90] (otimes-C.-135);
\draw (p.150) to[in=-90,out=90] (otimes-C.-45);
\draw (p.30) to[in=-90,out=90] (otimes-D.-135);
\draw (p.12) to[in=-90,out=90] (otimes-D.-45);
\draw (otimes-C) to[in=-90,out=90] (a-c);
\draw (otimes-D) to[in=-90,out=90] (d-f);
\end{tikzpicture}
\end{align*}
where $p$ and $q$ are each some 4-strand pure braid. One can readily check that these definitions are associative if $p$ and $q$ are either the identity or $\sigma_2^{-2}$. In general, associativity reduces to the equation in the braid group
$$
\begin{tikzpicture}[baseline=30,smallstring,x=0.4cm,y=0.6cm]
\foreach \x in {0,...,5}
{
	\draw (\x,0) -- (\x,5);
}
\draw[fill=white]  (-0.5,3) rectangle node {$\Delta_{1,2} p$} (5.5,4);
\draw[fill=white]  (-0.5,1) rectangle node {$p$} (3.5,2);
\end{tikzpicture}
=
\begin{tikzpicture}[baseline=30,smallstring,x=0.4cm,y=0.6cm]
\foreach \x in {0,...,5}
{
	\draw (\x,0) -- (\x,5);
}
\draw[fill=white]  (-0.5,3) rectangle node {$\Delta_{3,4} p$} (5.5,4);
\draw[fill=white]  (1.5,1) rectangle node {$p$} (5.5,2);
\end{tikzpicture}
$$
where $\Delta_{1,2} p$ denotes $p$ with the first two strands doubled, and similarly $\Delta_{3,4} p$ denotes $p$ with the last two strands doubled. We have not found any other solutions, nor been able to rule them out. 
This seems to be an interesting problem in braid theory!
In order to have braided interchange, we see that $p$ and $q$ must satisfy the equation
$$
\begin{tikzpicture}[baseline=40,smallstring]
\node (bottom-1) at (0,0) {};
\node (bottom-2) at (.25,0) {};
\node (bottom-3) at (.75,0) {};
\node (bottom-4) at (1,0) {};
\node (bottom-5) at (2,0) {};
\node (bottom-6) at (2.25,0) {};
\node (bottom-7) at (2.75,0) {};
\node (bottom-8) at (3,0) {};
\node[draw,rectangle] (p-L) at (.5,1) {$\quad p\quad$};
\node[draw,rectangle] (p-R) at (2.5,1) {$\quad p\quad$};
\node (top-1) at (0,3.5) {};
\node (top-2) at (.25,3.5) {};
\node (top-3) at (.75,3.5) {};
\node (top-4) at (1,3.5) {};
\node (top-5) at (2,3.5) {};
\node (top-6) at (2.25,3.5) {};
\node (top-7) at (2.75,3.5) {};
\node (top-8) at (3,3.5) {};
\draw (bottom-1) to[in=-90,out=90] (p-L.-145);
\draw[knot] (bottom-3) to[in=-90,out=90] (p-L.-120);
\draw[knot] (bottom-2) to[in=-90,out=90] (p-L.-60);
\draw (bottom-4) to[in=-90,out=90] (p-L.-35);
\draw (bottom-5) to[in=-90,out=90] (p-R.-145);
\draw[knot] (bottom-7) to[in=-90,out=90] (p-R.-120);
\draw[knot] (bottom-6) to[in=-90,out=90] (p-R.-60);
\draw (bottom-8) to[in=-90,out=90] (p-R.-35);
\node[draw,rectangle, fill=white] (q) at (1.5,2.5) {$\quad\qquad \Delta_{1,2,3,4} q \qquad\quad$};
\draw (p-L.145) to[in=-90,out=90] (q.-169);
\draw (p-L.120) to[in=-90,out=90] (q.-167);
\draw[knot] (p-R.145) to[in=-90,out=90] (q.-155);
\draw[knot] (p-R.120) to[in=-90,out=90] (q.-140);
\draw[knot] (p-L.60) to[in=-90,out=90] (q.-40);
\draw[knot] (p-L.35) to[in=-90,out=90] (q.-25);
\draw (p-R.60) to[in=-90,out=90] (q.-13);
\draw (p-R.35) to[in=-90,out=90] (q.-11);
\draw (q.169) to[in=-90,out=90] (top-1);
\draw (q.167) to[in=-90,out=90] (top-2);
\draw (q.155) to[in=-90,out=90] (top-3);
\draw (q.140) to[in=-90,out=90] (top-4);
\draw (q.40) to[in=-90,out=90] (top-5);
\draw (q.25) to[in=-90,out=90] (top-6);
\draw (q.13) to[in=-90,out=90] (top-7);
\draw (q.11) to[in=-90,out=90] (top-8);
\end{tikzpicture}
=
\begin{tikzpicture}[baseline=40,smallstring]
\node (bottom-1) at (0,0) {};
\node (bottom-2) at (.3,0) {};
\node (bottom-3) at (.7,0) {};
\node (bottom-4) at (1,0) {};
\node (bottom-5) at (2,0) {};
\node (bottom-6) at (2.3,0) {};
\node (bottom-7) at (2.7,0) {};
\node (bottom-8) at (3,0) {};
\coordinate (mid-1) at (0,1.1);
\coordinate (mid-2) at (.3,1.1);
\coordinate (mid-3) at (.7,1.1);
\coordinate (mid-4) at (1,1.1);
\coordinate (mid-5) at (2,1.1);
\coordinate (mid-6) at (2.3,1.1);
\coordinate (mid-7) at (2.7,1.1);
\coordinate (mid-8) at (3,1.1);
\node[draw,rectangle] (q-L) at (.5,2) {$\quad q\quad$};
\node[draw,rectangle] (q-R) at (2.5,2) {$\quad q\quad$};
\node[draw,rectangle, fill=white] (p) at (1.5,3.5) {$\quad\qquad \Delta_{1,2,3,4} p \qquad\quad$};
\node (top-1) at (0,4.5) {};
\node (top-2) at (.25,4.5) {};
\node (top-3) at (.75,4.5) {};
\node (top-4) at (1,4.5) {};
\node (top-5) at (2,4.5) {};
\node (top-6) at (2.25,4.5) {};
\node (top-7) at (2.75,4.5) {};
\node (top-8) at (3,4.5) {};
\draw (bottom-1) to[in=-90,out=90] (mid-1);
\draw (bottom-2) to[in=-90,out=90] (mid-2);
\draw[knot] (bottom-5) to[in=-90,out=90] (mid-3);
\draw[knot] (bottom-6) to[in=-90,out=90] (mid-4);
\draw[knot] (bottom-3) to[in=-90,out=90] (mid-5);
\draw[knot] (bottom-4) to[in=-90,out=90] (mid-6);
\draw (bottom-7) to[in=-90,out=90] (mid-7);
\draw (bottom-8) to[in=-90,out=90] (mid-8);
\draw (mid-1) to[in=-90,out=90] (q-L.-145);
\draw[knot] (mid-3) to[in=-90,out=90] (q-L.-120);
\draw[knot] (mid-2) to[in=-90,out=90] (q-L.-60);
\draw (mid-4) to[in=-90,out=90] (q-L.-35);
\draw (mid-5) to[in=-90,out=90] (q-R.-145);
\draw[knot] (mid-7) to[in=-90,out=90] (q-R.-120);
\draw[knot] (mid-6) to[in=-90,out=90] (q-R.-60);
\draw (mid-8) to[in=-90,out=90] (q-R.-35);
\draw (q-L.145) to[in=-90,out=90] (p.-169);
\draw (q-L.120) to[in=-90,out=90] (p.-167);
\draw[knot] (q-R.145) to[in=-90,out=90] (p.-155);
\draw[knot] (q-R.120) to[in=-90,out=90] (p.-140);
\draw[knot] (q-L.60) to[in=-90,out=90] (p.-40);
\draw[knot] (q-L.35) to[in=-90,out=90] (p.-25);
\draw (q-R.60) to[in=-90,out=90] (p.-13);
\draw (q-R.35) to[in=-90,out=90] (p.-11);
\draw (p.169) to[in=-90,out=90] (top-1);
\draw[knot] (p.167) to[in=-90,out=90] (top-3);
\draw[knot] (p.155) to[in=-90,out=90] (top-2);
\draw (p.140) to[in=-90,out=90] (top-4);
\draw (p.40) to[in=-90,out=90] (top-5);
\draw[knot] (p.25) to[in=-90,out=90] (top-7);
\draw[knot] (p.13) to[in=-90,out=90] (top-6);
\draw (p.11) to[in=-90,out=90] (top-8);
\end{tikzpicture}
$$
and one readily shows that this is not the case for $p,q \in \{ \id, \sigma_2^{-2} \}$.

Thus at this point, we can only say that the obvious approaches to forming the product of $\cV$-monoidal categories do not work, and ruling out any product requires some progress on equations in braid groups.
%
%
%

\subsection{\texorpdfstring{$\cV$}{V}-monoidal functors}

A (strictly unital) \emph{$\cV$-monoidal functor} $(\cF, \alpha)$ between $\cV$-monoidal categories $\cF: \cC \to \cD$ has an underlying
$\cV$-functor $\cF: \cC \to \cD$ such that $\cF(1_\cC)=1_\cD$ and $\cF_{1_\cC \to 1_\cC} = j_{1_\cD}$,
along
with a family of $\id_\cV$-graded isomorphisms $\alpha_{a,b}: \id_\cV \to \cD(\cF(ab) \to \cF(a)\cF(b))$ with
$\alpha_{1_\cC, 1_\cC}=j_{1_\cD}$
satisfying the naturality condition
\begin{equation}
\label{eq:NaturalityForVMonoidal}
\begin{tikzcd}
\cC(a \to c)\cC(b \to d)
	\ar[r, "-\otimes_\cC-"]
	\ar[d,"\cF_{a\to c}\cF_{b\to d}"]
& \cC(ab \to cd) \ar[dr, "\cF_{ab \to cd}"]	
&
\\
\cD(\cF(a)\to \cF(c)) \cD(\cF(b)\to \cF(d)) \ar[dr, "-\otimes_\cD-"]
& &
\cD(\cF(ab) \to \cF(cd)) \ar[d, "-\circ \alpha_{c,d}"]	  
\\
& 
\cD(\cF(a)\cF(b) \to \cF(c)\cF(d)) \ar[r, "\alpha_{a,b}\circ-"] 
&
\cD(\cF(ab) \to \cF(c)\cF(d)).
\end{tikzcd}
\end{equation}
and the associativity condition
\begin{equation}
\begin{tikzcd}
&
\cD(\cF(abc) \to \cF(a)\cF(bc))\cD(\cF(a)\cF(bc)\to \cF(a)\cF(b)\cF(c))  
  \ar[dr, "-\circ_\cD-"]
\\
1_\cV 
  \ar[ur, "\alpha_{a,bc}\left((j_{\cF(a)}\alpha_{b,c}) \circ (-\otimes_\cD-)\right)"]
  \ar[dr, swap, "\alpha_{ab,c}\left((\alpha_{a,b}j_{\cF(c)}) \circ (-\otimes_\cD-)\right)"]
&&[-40pt]
\cD(\cF(abc) \to \cF(a)\cF(b)\cF(c))
\\
&
\cD(\cF(abc) \to \cF(ab)\cF(c))\cD(\cF(ab)\cF(c)\to \cF(a)\cF(b)\cF(c)) 
  \ar[ur, swap, "-\circ_\cD-"]
\end{tikzcd}
\end{equation}

We say $(\cF,\alpha)$ is strict if $\alpha_{a,b} = j_{\cF(ab)}$ for all $a,b$; in this case the associativity condition
holds automatically.

We will see in Lemma \ref{lem:SeparateNaturalityForVMonoidal} below that the naturality condition above is equivalent to two separate naturality conditions, which are easier to verify.

\begin{remark}
\label{rem:CTimesC}
Usually one thinks of a monoidal functor as a functor $\cF: \cC\to \cD$ together with a natural isomorphism
between the functors $\cC\boxtimes \cC \to \cD$ given by $(\cF\boxtimes \cF)\circ (-\otimes_\cD-)$ and $(-\otimes_\cC -)\circ \cF$.

Unfortunately, this point of view is not available in the braided enriched setting.
As discussed in Section \ref{sec:NoProductsOfVMonoidalCategories}, there is no suitable notion of product of $\cV$-monoidal categories, unless $\cV$ happens to be symmetric.

Nevertheless, by the separate naturality conditions given in Lemma \ref{lem:SeparateNaturalityForVMonoidal}, we see a monoidal functor is a functor $\cF: \cC \to \cD$ together with a family of isomorphisms $\alpha_{a,b}: \id_\cV \to \cD(\cF(ab) \to \cF(a)\cF(b))$ such that
for every $a$, $$\alpha_{a,-}: \cF(a) \otimes_\cD \cF(-) \Rightarrow \cF(a \otimes_\cC -)$$ is a natural isomorphism between functors $\cC \to \cD$,
and similarly for every $b$, 
$$
\alpha_{-, b}: \cF(-) \otimes_\cD \cF(b) \Rightarrow \cF(- \otimes_\cC b)
$$ 
is a natural isomorphism.
\end{remark}

\begin{lem}
The composite of $\cV$-monoidal functors $(\cF, \alpha): \cC \to \cD$ and $(\cG, \gamma) : \cD \to \cE$ given by $(\cF\circ \cG, \delta): \cC \to \cE$ with
\begin{align*}
\delta_{a,b} =
  1_\cV
    \xrightarrow{\alpha_{a,b} \gamma_{\cF(a), \cF(b)}}
  & \cD(\cF(ab) \to \cF(a)\cF(b)) \cE(\cG(\cF(a)\cF(b)) \to \cG(\cF(a))\cG(\cF(a)))	\\
  \xrightarrow{\cG_{\cF(ab) \to \cF(a)\cF(b)} \id}
  & \cE(\cG(\cF(ab)) \to \cG(\cF(a)\cF(b))) \cE(\cG(\cF(a)\cF(b)) \to \cG(\cF(a))\cG(\cF(a))) \\
  \xrightarrow{-\circ_\cE-}
  & \cE(\cG(\cF(ab)) \to \cG(\cF(a))\cG(\cF(a)))
\end{align*}
is a $\cV$-monoidal functor, and this composition is associative.
\end{lem}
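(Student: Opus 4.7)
The plan is to verify in turn the four conditions required of a $\cV$-monoidal functor—that $\delta_{a,b}$ is a $1_\cV$-graded isomorphism, the strict unitality $\delta_{1_\cC,1_\cC}=j_{1_\cE}$, the naturality condition \eqref{eq:NaturalityForVMonoidal}, and the associativity condition—and then check separately that composition of $\cV$-monoidal functors is associative. Throughout the argument I will freely use the fact that $\cG_{x\to y}$ is a $\cV$-functor (so it satisfies \eqref{eq:CompositionAxiomForVFunctors} and \eqref{eq:TriangleIdentityAxiomForVFunctors}) together with the previously established associativity of composition of $\cV$-functors.

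First I would dispatch unitality and invertibility. Since $\cF$ and $\cG$ are strictly unital, $\cF(1_\cC)=1_\cD$, $\cG(1_\cD)=1_\cE$, and the tensorators satisfy $\alpha_{1_\cC,1_\cC}=j_{1_\cD}$, $\gamma_{1_\cD,1_\cD}=j_{1_\cE}$. Using \eqref{eq:TriangleIdentityAxiomForVFunctors} for $\cG$, we get $\cG_{1_\cD\to 1_\cD}\circ j_{1_\cD}=j_{1_\cE}$, and then the $\cV$-category identity axiom on $\cE$ gives $\delta_{1_\cC,1_\cC}=(j_{1_\cE}j_{1_\cE})\circ(-\circ_\cE-)=j_{1_\cE}$. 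Invertibility follows because the image of an invertible $1_\cV$-graded morphism under a $\cV$-functor is invertible (a one-line check using \eqref{eq:CompositionAxiomForVFunctors} and \eqref{eq:TriangleIdentityAxiomForVFunctors}), so $\cG_{\cF(ab)\to\cF(a)\cF(b)}\circ\alpha_{a,b}$ is invertible, and composing two invertible $1_\cV$-graded morphisms via $-\circ_\cE-$ is again invertible.

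Next I would handle naturality using Remark \ref{rem:CTimesC}: rather than tackling the full square \eqref{eq:NaturalityForVMonoidal} at once, I verify that $\delta_{a,-}$ makes $\cG\cF(a)\otimes_\cE\cG\cF(-)\Rightarrow\cG\cF(a\otimes_\cC-)$ natural, and similarly in the first slot. The key ingredients are: naturality of $\alpha$ in the relevant slot; applying the $\cV$-functor $\cG$ to both sides of that naturality (which uses \eqref{eq:CompositionAxiomForVFunctors} for $\cG$ to pass $\cG$ through compositions with $\alpha$); and then whiskering by $\gamma$, using naturality of $\gamma$. Putting these together gives the desired naturality square for $\delta$.

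The most laborious step is the associativity condition for $\delta$; this is where I expect the bookkeeping to be hardest. One expands $\delta_{a,bc}$ and $\delta_{ab,c}$ according to the definition, obtaining compositions of three pieces built from $\alpha$'s, $\gamma$'s, and the $\cV$-functor $\cG$. The argument then proceeds in two stages: first use the associativity condition for $\alpha$ (living in $\cD$) and push its image under $\cG$ through to $\cE$, using both \eqref{eq:CompositionAxiomForVFunctors} for $\cG$ and the associativity of $-\circ_\cE-$; second use the associativity condition for $\gamma$ to rearrange the $\gamma$-pieces. The two paths from $1_\cV$ to $\cE(\cG\cF(abc)\to\cG\cF(a)\cG\cF(b)\cG\cF(c))$ agree provided one carefully tracks the interpolating object $\cG(\cF(a)\cF(bc))$ against $\cG(\cF(ab)\cF(c))$ using naturality of $\gamma$ applied to $\cF_{a}\otimes\alpha_{b,c}\cdot\alpha^{-1}_{a,bc}\cdots$. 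This is a straightforward but somewhat involved pentagon chase; because the tensorators are all $1_\cV$-graded and the diagrams never involve a nontrivial braiding, there are no subtleties with $\beta$.

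Finally, associativity of composition of $\cV$-monoidal functors follows from two observations. The underlying composition of $\cV$-functors is already known to be associative. For the tensorators, if $(\cH,\eta):\cE\to\cK$ is a third $\cV$-monoidal functor, then computing the tensorator of $((\cF\circ\cG)\circ\cH,(\delta\text{-then-}\eta))$ and of $(\cF\circ(\cG\circ\cH),(\alpha\text{-then-composite of }\gamma,\eta))$ produces the same expression after applying \eqref{eq:CompositionAxiomForVFunctors} for $\cH$ (to pull $\cH$ past the composition built from $\cG_{\cF(ab)\to\cF(a)\cF(b)}\circ\alpha_{a,b}$ and $\gamma_{\cF(a),\cF(b)}$) and then using associativity of $-\circ_\cK-$. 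As with the rest of the proof, the argument is diagrammatic, uses no braiding, and is formally identical to the symmetric-enriched case.
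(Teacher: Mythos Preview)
Your proposal is correct and follows essentially the same strategy as the paper. For naturality you reduce to separate naturality in each slot (as in Remark~\ref{rem:CTimesC}/Lemma~\ref{lem:SeparateNaturalityForVMonoidal}) and then argue directly; the paper does the same reduction but packages the second step more efficiently by observing that $\delta_{a,-}$ is literally the horizontal composite $\alpha_{a,-}\,\gamma_{\cF(a),-}$ from Lemma~\ref{lem:HorizontalComposition}, so naturality is immediate. Your associativity-of-composition argument is the same as the paper's (functoriality of the outermost functor plus associativity of $-\circ_\cE-$); you additionally sketch the invertibility, unitality, and pentagon checks for $\delta$, which the paper leaves to the reader.
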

\begin{proof}
We can check two separate naturality conditions for $\delta$, by Lemma \ref{lem:SeparateNaturalityForVMonoidal}. These are then automatic: if we fix $a$, then $\delta_{a, -}$ is exactly the horizontal composition of $\alpha_{a, -}$ and $\gamma_{\cF(a),-}$, so it is natural by Lemma \ref{lem:HorizontalComposition}; similarly when we fix $b$.

To check associativity, suppose we have three $\cV$-monoidal functors
$$
\cB \xrightarrow{(\cF, \kappa)} \cC \xrightarrow{(\cG, \lambda)} \cD \xrightarrow{(\cH, \mu)} \cE.
$$
We see 
$((\cF,\kappa)\circ (\cG, \lambda)) \circ (\cH, \mu) = (\cF\circ \cG\circ \cH, \phi)$ and
$(\cF,\kappa)\circ ((\cG, \lambda) \circ (\cH, \mu)) = (\cF\circ \cG\circ \cH, \psi)$ where
$$
\phi=
\begin{tikzpicture}[baseline=40,smallstring]
\node[draw,rectangle] (k) at (0,0) {$\kappa$};
\node[draw,rectangle] (l) at (1,0) {$\lambda$};
\node[draw,rectangle] (m) at (2,0) {$\mu$};
\node[draw,rectangle] (G) at (0,1) {$\cG$};
\node[draw,rectangle] (circ-1) at (.5,2) {$-\circ_{\cD} -$};
\node[draw,rectangle] (H) at (.5,3) {$\cH$};
\node[draw,rectangle] (circ-2) at (1,4) {$-\circ_{\cE} -$};
\node (top) at (1,5) {};
\draw (k) to[in=-90,out=90] (G);
\draw (G) to[in=-90,out=90] (circ-1.-135);
\draw (l) to[in=-90,out=90] (circ-1.-45);
\draw (circ-1) to[in=-90,out=90] (H);
\draw (H) to[in=-90,out=90] (circ-2.-135);
\draw (m) to[in=-90,out=90] (circ-2.-45);
\draw (circ-2) to[in=-90,out=90] (top);
\end{tikzpicture}
\qquad
\text{and}
\qquad
\psi=
\begin{tikzpicture}[baseline=40,smallstring]
\node[draw,rectangle] (k) at (0,0) {$\kappa$};
\node[draw,rectangle] (l) at (1,0) {$\lambda$};
\node[draw,rectangle] (m) at (2,0) {$\mu$};
\node[draw,rectangle] (GH) at (0,2) {$\cG\circ \cH$};
\node[draw,rectangle] (H) at (1,1) {$\cH$};
\node[draw,rectangle] (circ-1) at (1.5,3) {$-\circ_{\cE} -$};
\node[draw,rectangle] (circ-2) at (1,4) {$-\circ_{\cE} -$};
\node (top) at (1,5) {};
\draw (k) to[in=-90,out=90] (GH);
\draw (l) to[in=-90,out=90] (H);
\draw (H) to[in=-90,out=90] (circ-1.-135);
\draw (m) to[in=-90,out=90] (circ-1.-45);
\draw (circ-1) to[in=-90,out=90] (circ-2.-45);
\draw (GH) to[in=-90,out=90] (circ-2.-135);
\draw (circ-2) to[in=-90,out=90] (top);
\end{tikzpicture}
.
$$
These two morphisms are equal by associativity of composition and functoriality of $\cH$.
\end{proof}

There are similar notions of oplax and lax $\cV$-monoidal functor, which we will not pursue here.
For now, we omit a discussion of natural transformations between
$\cV$-monoidal functors.

\subsection{Rigidity}
\label{sec:rigidity}

A $\cV$-monoidal category $\cC$ is \emph{rigid} 
if for every $c\in\cC$ there is:
\begin{itemize}
\item a \emph{dual} object $c^*\in\cC$ and $\id_\cV$-graded morphisms $\ev_c: \id_\cV \to \cC
(cc^* \to \id_\cC)$ and $\coev_c: \id_\cV \to \cC(\id_\cC \to c^* c)$ which satisfy the zig-zag axioms: the identity element $j_c:\id_\cV \to \cC(c\to c)$ is equal to the composite
\begin{align*}
\id_\cV \xrightarrow{j_c\coev_c\ev_cj_c} & \cC(c\to c)\cC(\id_\cC \to c^*c)\cC(cc^*\to \id_\cC) \cC(c\to c)\\
\xrightarrow{(- \otimes_\cC -)(- \otimes_\cC -)} & \cC(c\to cc^*c) \cC(cc^*c\to c) \\
\xrightarrow{(- \circ_\cC -)} & \cC(c\to c),
\end{align*}
and similarly $j_{c^*}=\left(\coev_c j_{c^*} j_{c^*}\ev_c\right)\circ\left((-\otimes_\cC -)(-\otimes_\cC
-)\right)\circ
\left(-\circ_\cC-\right)$.
\item
a \emph{predual} object $c_*\in\cC$ and $\id_\cV$-graded morphisms $\ev_c: \id_\cV \to \cC
(c_* c \to \id_\cC)$ and $\coev_c: \id_\cV \to \cC(\id_\cC \to c c_*)$ which satisfy similar zig-zag axioms.
\end{itemize}

\begin{remark}
\label{rem:StrictDuals}
For the purposes of this article, we assume that $(ba)^*=a^*b^*$ for all $a,b\in \cC$.
As with ordinary rigid monoidal categories, it is easy to see that the dual is unique up to unique isomorphism.
Thus one can always arrange the choices of dual objects in this way.
\end{remark}

Notice that this means there is an equivalence of $\cV$-monoidal categories $*:\cC\to \cC^{\text{rot}}$, where $\cC^{\text{rot}}$ is the rotation of $\cC$ discussed in Section \ref{sec:RotatingC}.
Here, $*_{a \to b}: \cC(a\to b)\to \cC(b^*\to a^*)$ is given by
{\scriptsize{
\begin{align*}
\cC(a\to b) \xrightarrow{\coev_a j_{b^*}j_{a^*} \id j_{b^*}j_{a^*}\ev_b } & \cC(\id_\cC \to a^*a)\cC(b^*\to b^*)\cC(a^*\to a^*) \cC(a\to b)\cC(b^*\to b^*) \cC(a^*\to a^*) \cC(bb^*\to \id_\cC)
\\
\xrightarrow{( -\otimes_\cC -)(-\otimes_\cC -\otimes_\cC -) (-\otimes_\cC -)} & \cC(b^* \to a^*ab^*) \cC(a^*ab^*\to a^*bb^*) \cC(a^*bb^* \to a^*)
\\
\xrightarrow{-\circ_\cC-\circ_\cC-} & \cC(b^*\to a^*),
\end{align*}
}}
and there is a similar formula
$$
*^{-1}_{b^*\to a^*} =
\left( j_a\coev_b j_{a} \id j_{b}\ev_b j_b \right) \circ
\left( ( -\otimes_\cC -)(-\otimes_\cC -\otimes_\cC -) (-\otimes_\cC -) \right) \circ
\left( -\circ_\cC-\circ_\cC- \right)
.
$$
By Remark \ref{rem:StrictDuals}, these are strict $\cV$-monoidal functors.

The reader may like to verify that when we are merely enriched in $\Vec$, the map $*_{a \to b}$ is just
\begin{align*}
\begin{tikzpicture}[baseline]
\node[draw,rectangle, thick] (f) at (0,0) {$f$};
\draw (f) -- (0,0.5);
\draw (f) -- (0,-0.5);
\end{tikzpicture}
\mapsto
\begin{tikzpicture}[baseline]
\node[draw,rectangle, thick] (f) at (0,0) {$f$};
\draw (f.north) to[out=90,in=90] ($(f.north)+(0.5,0)$) -- (0.5,-0.5);
\draw (f.south) to[out=-90,in=-90] ($(f.south)+(-0.5,0)$) -- (-0.5,0.5);
\end{tikzpicture}\,.
\end{align*}


As in ordinary rigid monoidal categories, we have \emph{Frobenius reciprocity};
one can build natural isomorphisms $\cC(a\to cb_*)\cong \cC(ab\to c)\cong \cC(b\to a^*c)$.
It is important to note that the verification that these maps are invertible uses the interchange relation.
We leave this enjoyable exercise to the reader.

\begin{remark}
It may appear as though our convention for duals is opposite to
the usual one, but given we are writing composition in the `natural' order 
(unlike in most of mathematics!) this should really be thought of as the usual convention in disguise.
\end{remark}

\section{The underlying monoidal category \texorpdfstring{$\cC^\cV$}{CV}}
\label{sec:UnderlyingTensorCategory}

Suppose $\cC$ is a $\cV$-monoidal category for a braided monoidal
category $\cV$. 
Recall that in the introduction, we defined the ordinary monoidal category $\cC^\cV$ by
$$
\cC^\cV(a \to b) = \cV(1_{\cV} \to \cC(a \to b)).
$$
The identity morphism $\id_a\in \cC^\cV(a\to a) = \cV(1_\cV\to \cC(a\to a))$ is the identity element $j_a$.
Composition of the morphisms $f\in \cC^\cV(a\to b)$ and $g\in \cC^\cV(b\to c)$ is given by
$$
1_\cV \xrightarrow{fg} \cC(a\to b) \cC(b\to c) \xrightarrow{-\circ_\cC-} \cC(a\to c).
$$
Tensor product of the morphisms $f\in \cC^\cV(a\to b)$ and $g\in \cC^\cV(c\to d)$ is given by
$$
1_\cV \xrightarrow{fg} \cC(a\to b) \cC(c\to d) \xrightarrow{-\otimes_\cC-} \cC(ac\to bd).
$$

\subsection{Properties of \texorpdfstring{$\cC^\cV$}{CV}}
It is straightforward to verify that $\cC^\cV$ is an ordinary monoidal category.

\begin{lem}
\label{lem:CVrigid}
If $\cC$ is rigid, then $\cC^\cV$ is rigid.
\end{lem}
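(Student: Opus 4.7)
The plan is to show that the same duals work: given $c \in \cC$ with dual $c^* \in \cC$ and structure morphisms $\ev_c$ and $\coev_c$ in $\cC$, I will use $c^*$ as the dual of $c$ in $\cC^\cV$, and reinterpret $\ev_c$ and $\coev_c$ as evaluation and coevaluation morphisms in $\cC^\cV$.

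First, observe that $\ev_c: 1_\cV \to \cC(cc^* \to 1_\cC)$ is by definition a morphism in $\cV(1_\cV \to \cC(cc^* \to 1_\cC))$, which is exactly $\cC^\cV(cc^* \to 1_\cC)$; similarly $\coev_c$ is a morphism in $\cC^\cV(1_\cC \to c^*c)$. So the required evaluation and coevaluation maps in $\cC^\cV$ are already in hand. The same applies to the predual $c_*$ with its structure maps.

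Next, I need to verify the zig-zag identities in $\cC^\cV$, i.e. that in $\cC^\cV$ we have
\[
(j_c \otimes_{\cC^\cV} \coev_c) \circ_{\cC^\cV} (\ev_c \otimes_{\cC^\cV} j_c) = j_c,
\]
together with the analogous identity for $c^*$ and the two identities for the predual. Unpacking the definitions of composition and tensor product in $\cC^\cV$ from Section \ref{sec:UnderlyingTensorCategory}, the left-hand side above is
\[
1_\cV \xrightarrow{j_c \, \coev_c \, \ev_c \, j_c} \cC(c\to c)\cC(1_\cC \to c^*c)\cC(cc^*\to 1_\cC)\cC(c\to c) \xrightarrow{(-\otimes_\cC-)(-\otimes_\cC-)} \cC(c \to cc^*c)\cC(cc^*c \to c) \xrightarrow{-\circ_\cC-} \cC(c \to c),
\]
which is precisely the zig-zag axiom in $\cC$, hence equals $j_c$. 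The other three zig-zag identities reduce in the same way.

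The only slightly subtle point is that this all works because the definitions of $-\circ_{\cC^\cV}-$ and $-\otimes_{\cC^\cV}-$ are literally built from $-\circ_\cC-$ and $-\otimes_\cC-$, so no braiding of $\cV$ enters the calculation (the $1_\cV$-graded morphisms slide past one another trivially). The main obstacle, which turns out to be no obstacle at all, is to make sure that each occurrence of $j_c$ in the $\cC$-zig-zag corresponds on the nose to an identity morphism in $\cC^\cV$; but this is immediate from the definition $\id_a := j_a \in \cC^\cV(a \to a)$.
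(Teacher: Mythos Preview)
Your proposal is correct and takes essentially the same approach as the paper: the paper's proof is the one-line remark that the (co)evaluation maps for $c\in \cC^\cV$ are the same as those for $c\in\cC$, and you have simply unpacked this to verify the zig-zag axioms explicitly.
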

\begin{proof}
The (co)evaluation maps for $c\in \cC^\cV$ are the same as those for $c\in\cC$.
\end{proof}

\begin{defn}
\label{defn:FunctorGV}
Suppose $\cC, \cD$ are $\cV$-monoidal categories, and $(\cG,\alpha) : \cC \to \cD$ is a strong $\cV$-monoidal functor.
We construct a functor $\cG^\cV : \cC^\cV \to \cD^\cV$ by $\cG^\cV(c) = \cG(c)$ and
$$
\cG^\cV(f: a\to b) = [1_\cV \xrightarrow{f} \cC(a\to b) \xrightarrow{\cG_{a\to b}} \cD(a\to b)].
$$ 
We see that $\cG^\cV$ is a functor from the axioms of a $\cV$-functor.
Notice that $\alpha^\cV_{a,b}:=\alpha_{a,b}\in \cV(1_\cV\to \cD(\cG(ab)\to \cG(a)\cG(b))) = \cD^\cV(\cG(ab)\to \cG(a)\cG(b))$ endows $\cG^\cV$ with the structure of a strong monoidal functor.
%
Note that if $(\cG, \alpha)$ is a lax or oplax $\cV$-monoidal functor, then so is $(\cG^\cV, \alpha^\cV)$.
\end{defn}

\subsection{The categorified `trace'}

\begin{defn}
For all $a\in \cC$, we define a functor $\cR_a: \cC^\cV\to \cV$ as follows.
On objects, we define $\cR_a(b) = \cC(a\to b)$, and  for a morphism $f\in \cC^\cV(b\to c) = \cV(1_\cV \to \cC(b\to c))$, we define $\cR_a(f)$ as the composite
$$
\cC(a\to b)
\xrightarrow{\id f}
\cC(a\to b)\cC(b\to c)
\xrightarrow{-\circ_\cC -}
\cC(a\to c).
$$
It is straightforward to verify that $\cR_a$ is a functor using the axioms of a $\cV$-enriched category.
\end{defn}

The functor $\cR_{1_\cC}$, which we also denote by $\Tr_\cV$, is of special importance because it is \emph{lax monoidal} with  \emph{laxitor} given by
\begin{equation}
\label{eq:laxitor}
\cC(1_\cC\to a)\cC(1_\cC\to b) \xrightarrow{-\otimes_\cC -} \cC(1_\cC \to ab).
\end{equation}

\begin{remark}
\label{rem:Traciator}
We use the notation $\Tr_\cV$ in the spirit of \cite{1509.02937}; when $\cC$ is $1_\cV$-graded pivotal, i.e., there is a $1_\cV$-graded natural isomorphism $\id_\cC \cong **$, we get a $1_\cV$-graded \emph{traciator} isomorphism $\tau_{a,b}:\Tr_\cV(ab) \to \Tr_\cV(ba)$ by the composite
$$
\Tr_\cV(ab) = \cC(1_\cC\to ab) \cong \cC(b^*\to a)\cong \cC(1_\cC\to b^{**}a)\cong \cC(1_\cC\to ba) = \Tr_\cV(ba).
$$
When $\cC$ is merely rigid, we only get an isomorphism $\Tr_\cV(ab) \to \Tr_\cV(b^{**}a)$.
When $\cC$ is not rigid, we get no such isomorphism.
Since our main theorem requires rigidity of $\cC$, using $\Tr_\cV$ is only a slight abuse of notation.
\end{remark}

\section{Adjunctions, mates, and evaluations}
\label{sec:Mates}

Suppose we have an adjunction $\cA(\cL(v) \to b) \cong \cB(v\to \cR(b))$.
Recall that the mate of $f\in  \cA(v\to \cR(b))$ is the corresponding map in $\cB(v\to \cR(b))$.
We record the following remark for later use:

\begin{remark}
\label{rem:Mates}
A straightforward calculation using naturality shows that the mate of $f$ is equal to $\cL(f)$ precomposed with the mate of $\id_{\cR(b)}\in \cB(\cR(b)\to \cR(b))$.
More generally, for $f_1\in  \cA(v\to \cR(b))$ and $f_2\in \cA(\cR(b)\to \cR(c))$, we have $\mate(f_1\circ f_2) = \cL(f_1)\circ\mate(f_2)$.

Similarly, the mate of $g\in \cB(\cL(v) \to b)$ is equal to $\cR(g)$ postcomposed with the mate of $\id_{\cL(v)} \in \cA(\cL(v)\to \cL(v))$. 
More generally, for $g_1\in  \cB(\cL(u)\to \cL(v))$ and $g_2\in \cB( \cL(v)\to b)$, we have $\mate(g_1\circ g_2) = \mate(g_1)\circ\cR(g_2)$.
\end{remark}

\subsection{Left adjoints of the trace}

We now assume that the categorified trace $\Tr_\cV: \cC^\cV\to \cV$ given by $\Tr_\cV(c) = \cC(1_\cC\to c)$ has a left adjoint, which we denote by $\cF: \cV\to \cC^\cV$.
This means we have an adjunction given by
\begin{equation}
\label{eq:TraceAdjunction}
\cC^\cV(\cF(v) \to c) \cong \cV(v\to \Tr_\cV(c)).
\end{equation}

\begin{remark}
\label{rem:LeftAdjointExists}
In the future, we hope to develop the theory of semisimplicity for $\cV$-enriched categories to the point that one can say that if $\cV$ is fusion, and $\cC$ is $\cV$-fusion, then this left adjoint $\cF$ automatically exists, and
$$
\cF(v) = \bigoplus_{c \in \Irr(\cC^\cV)} \cV(v \to \Tr_\cV(c)) c,
$$
suitably interpreted.
\end{remark}

\begin{defn}
Setting $c=\cF(v)$ in Adjunction \eqref{eq:TraceAdjunction}, the mate of $\id_{\cF(v)}\in \cC^\cV(\cF(v)\to \cF(v))$ is a canonical map $\eta_v: v\to \cV(v\to \Tr_\cV(\cF(v)))$ called the \emph{unit} of the adjunction.
\end{defn}

\begin{lem}
\label{lem:Oplaxitor}
The functor $\cF$ comes equipped with the structure of an oplax monoidal functor.
\end{lem}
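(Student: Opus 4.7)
The plan is to construct the oplaxitor $\mu_{u,v}: \cF(uv) \to \cF(u)\cF(v)$ in $\cC^\cV$ as the mate, under the adjunction \eqref{eq:TraceAdjunction} with $c = \cF(u)\cF(v)$, of the morphism in $\cV$ given by
\[
uv \xrightarrow{\eta_u \eta_v} \cC(1_\cC \to \cF(u))\, \cC(1_\cC \to \cF(v)) \xrightarrow{\,-\otimes_\cC -\,} \cC(1_\cC \to \cF(u)\cF(v)) = \Tr_\cV(\cF(u)\cF(v)),
\]
where the second arrow is the laxitor of $\Tr_\cV$ from \eqref{eq:laxitor} (using the strict unitor $1_\cC 1_\cC = 1_\cC$). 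In other words, $\mu_{u,v}$ is chosen so that transporting it across the adjunction recovers exactly the laxitor of $\Tr_\cV$ applied to the units.

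First I would verify naturality of $\mu$. Given morphisms $f: u \to u'$ and $g: v \to v'$ in $\cV$, the oplax naturality square asserts an equality of two morphisms $\cF(uv) \to \cF(u')\cF(v')$ in $\cC^\cV$. Taking mates via the adjunction and applying Remark \ref{rem:Mates}, this reduces to an equality of morphisms $uv \to \Tr_\cV(\cF(u')\cF(v'))$ in $\cV$. Both sides unfold to the same composite using the naturality of the unit $\eta$ of the adjunction and the functoriality of $-\otimes_\cC -$ in both arguments (which is built into the $\cV$-enriched axioms).

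Next, for coassociativity I would show that the two morphisms $\cF(uvw) \to \cF(u)\cF(v)\cF(w)$ obtained by composing $\mu$'s in the two possible orders agree in $\cC^\cV$. Transporting across the adjunction and applying Remark \ref{rem:Mates} twice on each side, each mate becomes the composite of $\eta_u \eta_v \eta_w$ with two applications of $-\otimes_\cC -$, bracketed as $((\eta_u \eta_v) \otimes_\cC \eta_w)$ first or $(\eta_u \otimes_\cC (\eta_v \eta_w))$ first. These two composites in $\cV$ agree by the associativity axiom for $-\otimes_\cC -$ (equivalently, the lax monoidal coherence of $\Tr_\cV$). The main obstacle is precisely this coassociativity step: propagating $\cF$ across tensor products can only be done on the mate side, and the bookkeeping via Remark \ref{rem:Mates}, while routine, is notationally dense. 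The actual mathematical content, however, is just the associativity of the tensor product in $\cC$ together with the naturality of $\eta$.
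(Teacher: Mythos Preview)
Your proposal is correct and matches the paper's approach. The paper defines the oplaxitor $\mu_{u,v}$ by exactly the same formula you give (the mate of $(\eta_u\eta_v)\circ(-\otimes_\cC-)$ under Adjunction \eqref{eq:TraceAdjunction}), but rather than spelling out the naturality and coassociativity checks, it simply invokes the well-known fact that the left adjoint of a lax monoidal functor is oplax monoidal (citing Kelly's doctrinal adjunction paper), and remarks in one line that associativity follows from associativity of the laxitor and of the tensor products in $\cC$ and $\cV$; your direct verification via mates is the unpacked version of that citation.
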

\begin{proof}
This is well-known; the left adjoint of a lax monoidal functor is oplax, and the right adjoint of an oplax monoidal functor is lax \cite{MR0360749}.
For the reader's convenience, the \emph{oplaxitor} map $\mu_{u,v}\in\cC^\cV(\cF(uv) \to \cF(u)\cF(v))$ is given explicitly as the mate of 
$$
uv \xrightarrow{\eta_u\eta_v} \Tr_\cV(\cF(u))\Tr_\cV(\cF(v)) \xrightarrow{\text{laxitor from \eqref{eq:laxitor}}} \Tr_\cV(\cF(u)\cF(v))
$$ 
under Adjunction \eqref{eq:TraceAdjunction}.
(Observe that there's no way to build a map in the other direction.) 
Associativity of the oplax structure comes from the associativity of the tensor products in $\cC$ and
$\cV$, and associativity of the laxitor.
\end{proof}

\subsection{Extra structure from rigidity}
\label{sec:ExtraStructureFromRigidity}

We now assume that $\cC$ is rigid, so $\cC^\cV$ is also rigid by Lemma \ref{lem:CVrigid}.
We then see that the functor $\cR_a: \cC^\cV\to \cV$ given by $\cR_a(b) = \cC(a\to b)$ has left adjoint $\cL_a : \cV \to \cC^\cV$ given by $v\mapsto a\cF(v)$.
Indeed,
\begin{equation}
\label{eq:La-Adjunction}
\begin{split}
\cC^\cV(\cL_a(v) \to b) 
&=
\cC^\cV(a\cF(v) \to b)
\\&\cong
\cC^\cV(\cF(v) \to a^*b)
\\&\cong
\cV(v \to \Tr_\cV(a^*b))
\\&=
\cV(v \to \cC(1_\cC\to a^*b))
\\&\cong
\cV(v \to \cC(a\to b))
\\&=
\cV(v \to \cR_a(b)).
\end{split}
\end{equation}

We introduce the following notation to make our diagrams easier to read:
\begin{align*}
\{a \to b\} &\overset{\text{def}}{=} \cF(\cC(a \to b)).
\\
\{a \to b;c\to d;\cdots \} &\overset{\text{def}}{=} \cF(\cC(a \to b)\cC(c \to d) \cdots).
\end{align*}

\begin{defn}
The evaluation morphism (or counit) $\varepsilon_{a \to b}\in \cC^\cV(a \{a \to b\} \to b)$ is
the mate of the identity $\cV(\cC(a \to b)\to \cC(a \to b))$ under Adjunction \eqref{eq:La-Adjunction}.
\end{defn}

\subsection{Computing mates}
We now compute the mates of the composition and tensor product morphisms.
The proof of the following lemma is surprisingly difficult compared to the simplicity of its statement.
We defer its proof to Appendix \ref{sec:ProofsAdjunctionsAndMates}.

\begin{lem}
\label{lem:MateOfId - aFv}
The mate of $\id_{a\cF(v)}\in \cC^\cV(a\cF(v)\to a\cF(v))$ is given by $$(j_a \eta_v) \circ (-\otimes_\cC -)\in \cV(v\to
\cC(a\to a\cF(v))).$$
\end{lem}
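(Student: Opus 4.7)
My plan is to unpack the adjunction~\eqref{eq:La-Adjunction} as a composition of three hom-set bijections, chase $\id_{a\cF(v)}$ through each, and collapse the result by a single application of the zig-zag axiom. Concretely, the adjunction~\eqref{eq:La-Adjunction} decomposes as: \textbf{(a)} Frobenius reciprocity, $\cC^\cV(a\cF(v)\to b)\cong \cC^\cV(\cF(v)\to a^*b)$; \textbf{(b)} the $\cF\dashv\Tr_\cV$ adjunction, $\cC^\cV(\cF(v)\to a^*b)\cong \cV(v\to \cC(1_\cC\to a^*b))$; and \textbf{(c)} Frobenius reciprocity once more, $\cC(1_\cC\to a^*b)\cong \cC(a\to b)$, applied after $\cV(v\to-)$. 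I take $b=a\cF(v)$ throughout.

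For step (a), the mate of $\id_{a\cF(v)}$ under the rigidity adjunction $a(-)\dashv a^*(-)$ on $\cC$ is the unit at $\cF(v)$, namely the $1_\cV$-graded morphism $\coev_a\otimes \id_{\cF(v)}:\cF(v)\to a^*a\cF(v)$. For step (b), Remark~\ref{rem:Mates} gives the mate of a morphism $g:\cF(v)\to a^*a\cF(v)$ as $\eta_v$ enrichedly post-composed with $g$. Viewed in the ordinary-category picture of $\cC^\cV$, this is the $v$-graded composite $\eta_v\circ(\coev_a\otimes \id_{\cF(v)}):1_\cC\to a^*a\cF(v)$. A single use of braided interchange rewrites it as $\coev_a\otimes \eta_v$: the interchange is trivial here because the only non-$1_\cV$-graded factor is $\eta_v$, and its neighbour is the $1_\cV$-graded identity $\id_{\cF(v)}$, so the braiding that appears is $\beta_{v,1_\cV}=\id_v$.

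Step (c) applies the inverse Frobenius reciprocity, $h\mapsto (\id_a\otimes h)\circ(\ev_a\otimes \id_{a\cF(v)})$, to $h=\coev_a\otimes \eta_v$. Regrouping by strict associativity, this composite is
\[
a\;\xrightarrow{(\id_a\otimes \coev_a)\otimes \eta_v}\;aa^*a\cF(v)\;\xrightarrow{(\ev_a\otimes \id_a)\otimes \id_{\cF(v)}}\;a\cF(v).
\]
A second (still trivial) braided interchange factors this as $\bigl((\id_a\otimes \coev_a)\circ(\ev_a\otimes \id_a)\bigr)\otimes(\eta_v\circ \id_{\cF(v)})$. The zig-zag axiom for the duality in $\cC$ collapses the left tensor factor to $\id_a$, leaving $\id_a\otimes \eta_v$, which in enriched language is precisely $(j_a\eta_v)\circ(-\otimes_\cC -)$, as claimed.

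The main obstacle, and presumably why the proof is deferred to Appendix~\ref{sec:ProofsAdjunctionsAndMates}, is to faithfully express each of the three bijections (a)--(c) as honest $\cV$-morphisms built from $-\circ_\cC-$ and $-\otimes_\cC-$, and then to audit every implicit use of the braided interchange. In every case the non-$1_\cV$-graded factor is $\eta_v$, adjacent only to $1_\cV$-graded identities, so the only braiding one ever meets is $\beta_{v,1_\cV}=\id$, and the enriched manipulations collapse back to the ordinary rigid zig-zag.
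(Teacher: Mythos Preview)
Your proposal is correct and follows essentially the same route as the paper's proof: decompose Adjunction~\eqref{eq:La-Adjunction} into the three bijections (rigidity, $\cF\dashv\Tr_\cV$, rigidity again), chase $\id_{a\cF(v)}$ through, and collapse with the zig-zag axiom. The paper carries this out in full string-diagram detail, whereas you work in the compressed $v$-graded--morphism language; both rely on the same key observation that every braided interchange encountered is trivial because $\eta_v$ is the only factor with non-unit grading.

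One small imprecision: in step~(b) you justify the trivial braiding by pointing to ``its neighbour $\id_{\cF(v)}$,'' but in the interchange $(j_{1_\cC}\otimes\eta_v)\circ(\coev_a\otimes j_{\cF(v)})$ the braiding that actually appears is between the gradings of $\eta_v$ and $\coev_a$ (the $g_1$ and $f_2$ slots in \eqref{eq:BraidedInterchange}), not $j_{\cF(v)}$. Your conclusion is unaffected, since $\coev_a$ is also $1_\cV$-graded, but the stated reason is aimed at the wrong factor. Your closing paragraph gets it right: every other factor is $1_\cV$-graded, so whichever pair the interchange braids, one side is $1_\cV$ and the braiding is the identity.
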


By the second half of Remark \ref{rem:Mates}, we get the following corollary.

\begin{cor}
\label{cor:MateUsingTrace}
The mate of $f\in \cC^\cV(a\cF(v)\to b)$ is given by 
$
\mate(\id_{a\cF(v)}) \circ \Tr_\cV(f)
=
\begin{tikzpicture}[baseline=40,smallstring]
\node (1-L) at (0,0) {$\iV$};
\node (a-b-bottom) at (1,0) {$v$};
\node (1-R) at (2,0) {$\iV$};
\node[draw,rectangle] (j-a) at (0,1) {$j_a$};
\node[draw,rectangle] (eta) at (1,1) {$\eta_{v}$};
\node[draw,rectangle] (f) at (2,1) {$f$};
\node[draw,rectangle] (otimes) at (.5,2) {$-\otimes_\cC -$};
\node[draw,rectangle] (circ) at (1,3) {$-\circ_\cC -$};
\node (a-b-top) at (1,4) {$\cC(a\to b)$};
\draw (j-a) to[in=-90,out=90] (otimes.-135);
\draw (a-b-bottom) to[in=-90,out=90] (eta);
\draw (eta) to[in=-90,out=90] (otimes.-45);
\draw (otimes) to[in=-90,out=90] (circ.-135);
\draw (f) to[in=-90,out=90] (circ.-45);
\draw (circ) to[in=-90,out=90] (a-b-top);
\end{tikzpicture}.
$
\end{cor}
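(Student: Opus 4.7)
The plan is to combine Lemma \ref{lem:MateOfId - aFv} with the mate-composition formula from the second half of Remark \ref{rem:Mates}, applied to the adjunction $\cL_a \dashv \cR_a$ from \eqref{eq:La-Adjunction}, where $\cL_a(v) = a\cF(v)$ and $\cR_a(b) = \cC(a \to b)$.

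First, I would factor $f$ trivially inside $\cC^\cV$ as $f = \id_{a\cF(v)} \circ f$, viewing $\id_{a\cF(v)} \in \cC^\cV(\cL_a(v) \to \cL_a(v))$ as the left-hand factor and $f \in \cC^\cV(\cL_a(v) \to b)$ as the right-hand factor. The second half of Remark \ref{rem:Mates}, applied to $\cL_a \dashv \cR_a$, then yields
\begin{equation*}
\mate(f) \;=\; \mate(\id_{a\cF(v)} \circ f) \;=\; \mate(\id_{a\cF(v)}) \circ \cR_a(f),
\end{equation*}
as an element of $\cV(v \to \cC(a \to b))$.

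Next, I would replace the first factor using Lemma \ref{lem:MateOfId - aFv}, which identifies $\mate(\id_{a\cF(v)}) \in \cV(v \to \cC(a \to a\cF(v)))$ with $(j_a\,\eta_v)\circ(-\otimes_\cC-)$. For the second factor, I would just unfold the definition of $\cR_a$ on morphisms: since $f$ is a $1_\cV$-graded morphism $1_\cV \to \cC(a\cF(v)\to b)$, the map $\cR_a(f)\colon \cC(a\to a\cF(v)) \to \cC(a\to b)$ is by definition the composite $\cC(a\to a\cF(v)) \xrightarrow{\id\,f} \cC(a\to a\cF(v))\cC(a\cF(v)\to b) \xrightarrow{-\circ_\cC-} \cC(a\to b)$. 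Stacking these two pieces reproduces the string diagram in the statement: $j_a$, $\eta_v$, and $f$ appear at the bottom, $j_a$ and $\eta_v$ are combined by $-\otimes_\cC-$, and the result is composed with $f$ via $-\circ_\cC-$.

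Since essentially all the real work has already been done in Lemma \ref{lem:MateOfId - aFv}, the only thing to check is that the shapes of the adjunction and the formula from Remark \ref{rem:Mates} line up correctly — in particular that no additional braidings are introduced beyond those already present in $\mate(\id_{a\cF(v)})$. This is the one mild source of trouble, but it is automatic once one notes that $f$ is $1_\cV$-graded, so applying $\cR_a$ to $f$ only inserts a composition and no braiding.
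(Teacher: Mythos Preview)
Your argument is correct and is exactly the paper's approach: the corollary is introduced with the sentence ``By the second half of Remark \ref{rem:Mates}, we get the following corollary,'' and your proof simply spells out that application of $\mate(f)=\mate(\id_{a\cF(v)})\circ\cR_a(f)$ together with Lemma \ref{lem:MateOfId - aFv}. You are in fact more careful than the displayed formula in writing $\cR_a(f)$ rather than $\Tr_\cV(f)$ for the second factor (since $\Tr_\cV=\cR_{1_\cC}$, not $\cR_a$); the string diagram is the unambiguous content and you recover it correctly.
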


\begin{cor}
\label{cor:Simplify-j-eta-ev}
As a particular instance of Corollary \ref{cor:MateUsingTrace}, we have
$
\begin{tikzpicture}[baseline=40,smallstring]
\node (1-L) at (0,0) {$\iV$};
\node (a-b-bottom) at (2,0) {$\cC(a\to b)$};
\node (1-R) at (4,0) {$\iV$};
\node[draw,rectangle] (j-a) at (0,1) {$j_a$};
\node[draw,rectangle] (eta) at (2,1) {$\eta_{\cC(a\to b)}$};
\node[draw,rectangle] (ev-a-b) at (4,1) {$\varepsilon_{a\to b}$};
\node[draw,rectangle] (otimes) at (1,2) {$-\otimes_\cC -$};
\node[draw,rectangle] (circ) at (2,3) {$-\circ_\cC -$};
\node (a-b-top) at (2,4) {$\cC(a\to b)$};
\draw (j-a) to[in=-90,out=90] (otimes.-135);
\draw (a-b-bottom) to[in=-90,out=90] (eta);
\draw (eta) to[in=-90,out=90] (otimes.-45);
\draw (otimes) to[in=-90,out=90] (circ.-135);
\draw (ev-a-b) to[in=-90,out=90] (circ.-45);
\draw (circ) to[in=-90,out=90] (a-b-top);
\end{tikzpicture}
=
\id_{\cC(a\to b)}.
$
\end{cor}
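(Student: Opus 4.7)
The plan is to observe that this corollary is essentially an unwinding of the definition of $\varepsilon_{a\to b}$ combined with the formula from Corollary \ref{cor:MateUsingTrace}. The main step is to apply Corollary \ref{cor:MateUsingTrace} with $v := \cC(a \to b)$ and $f := \varepsilon_{a \to b}$, which is an element of $\cC^\cV(a\cF(v) \to b) = \cC^\cV(a\{a \to b\} \to b)$.

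Doing so, Corollary \ref{cor:MateUsingTrace} tells us that the mate of $\varepsilon_{a\to b}$ under Adjunction \eqref{eq:La-Adjunction} is precisely the string diagram appearing on the left-hand side of the claimed equation. On the other hand, by definition, $\varepsilon_{a\to b}$ was introduced as the mate of $\id_{\cC(a \to b)} \in \cV(\cC(a \to b) \to \cC(a \to b))$ under this same adjunction. Since mating is a bijective correspondence (that is, an involution in the appropriate sense), the mate of $\varepsilon_{a\to b}$ back is $\id_{\cC(a \to b)}$.

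Combining these two identifications yields the equation. There is no genuine obstacle here; the content of the statement is simply to make explicit the composite appearing in Corollary \ref{cor:MateUsingTrace} in the special case $v = \cC(a \to b)$, $f = \varepsilon_{a\to b}$, which will be frequently useful in subsequent calculations (e.g., for simplifying the diagrams defining the oplaxitor and the central structure on $\cF^{\scriptscriptstyle Z}$).
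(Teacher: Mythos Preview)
Your proposal is correct and matches the paper's proof essentially verbatim: apply Corollary~\ref{cor:MateUsingTrace} with $v=\cC(a\to b)$ and $f=\varepsilon_{a\to b}$ to identify the diagram as $\mate(\varepsilon_{a\to b})$, and then use that $\varepsilon_{a\to b}$ is by definition the mate of $\id_{\cC(a\to b)}$.
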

\begin{proof}
By the previous corollary, the mate of $\varepsilon_{a\to b}$ is given by the diagram on the left.
But the mate of $\varepsilon_{a\to b}$ is $\id_{\cC(a\to b)}$ by definition.
\end{proof}

The proof of the following proposition is also quite involved and deferred to Appendix \ref{sec:ProofsAdjunctionsAndMates}.

\begin{prop}
\label{prop:MateOfComposition}
The mate of the composition map $(-\circ_\cC-)\in \cV( \cC(a\to b)\cC(b\to c) \to \cC(a\to c))$ under Adjunction \eqref{eq:La-Adjunction}
with $v= \cC(a\to b)\cC(b\to c)$ and $b=c$ is given by
$$
\begin{tikzpicture}[baseline=50,smallstring]
\node (a) at (0,0) {$a$};
\node (F) at (2,0) {$\{a\to b; b\to c\}$};
\node[draw,rectangle] (m) at (2,1) {$\mu_{\{a\to b\}, \{b\to c\}} $};
\draw[double] (F) to[in=-90,out=90] (m.270);
\node[draw,rectangle] (ev-a-b) at (1,2) {$\varepsilon_{a\to b}$};
\draw (a) to[in=-90,out=90] (ev-a-b.-135);
\draw (m.135) to[in=-90,out=90] (ev-a-b.-45);
\node[draw,rectangle] (ev-b-c) at (2,3) {$\varepsilon_{b\to c}$};
\draw (m.45) to[in=-90,out=90] (ev-b-c.-45);
\draw (ev-a-b) to[in=-90,out=90] (ev-b-c.-135);
\node (c) at (2,4) {$c$};
\draw (ev-b-c) to[in=-90,out=90] (c);
\end{tikzpicture}
$$
\end{prop}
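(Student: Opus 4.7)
We show that the proposed right-hand side $G \in \cC^\cV(a\{uw\} \to c)$, where we write $u = \cC(a\to b)$ and $w = \cC(b\to c)$, is the mate of $-\circ_\cC -$ by verifying that $\mate(G) = -\circ_\cC -$ in $\cV(uw \to \cC(a\to c))$; uniqueness of mates then yields the result.

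The mate is computed via Corollary \ref{cor:MateUsingTrace}, which tells us that $\mate(g) = (j_a \otimes \eta_v \otimes g) \circ ((-\otimes_\cC -) \otimes \id) \circ (-\circ_\cC -)$ for any $g \in \cC^\cV(a\cF(v) \to c)$. We substitute $g = G$ and unfold $G$ as a composition in $\cC^\cV$ of its three tensor factors, each of which is itself a $-\otimes_\cC -$ combination of a $j$, a structural morphism ($\mu_{u,w}$, $\varepsilon_{a\to b}$, or $\varepsilon_{b\to c}$), and an identity. This produces an explicit if somewhat elaborate string diagram in $\cV$.

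The diagram simplifies using two main inputs. The first is the defining property of the oplaxitor from Lemma \ref{lem:Oplaxitor}: $\mu_{u,w}$ is the $\cF \dashv \Tr_\cV$-mate of $(\eta_u \otimes \eta_w) \circ (-\otimes_\cC -)$, equivalently $\eta_{uw} \circ \Tr_\cV(\mu_{u,w}) = (\eta_u \otimes \eta_w) \circ (-\otimes_\cC -)$. This lets us replace the $\eta_{uw}$--$\mu_{u,w}$ fragment of the diagram by a pair $\eta_u$, $\eta_w$ feeding separately into the two evaluations. The second is Corollary \ref{cor:Simplify-j-eta-ev}, applied once with the pair $(a,b)$ and once with $(b,c)$, which collapses each $j$--$\eta$--$\varepsilon$ triangle into the identity on the corresponding hom-object. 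After these simplifications, a rearrangement using associativity of $-\circ_\cC -$ and the braided interchange axiom in $\cV$ reduces the entire expression to $-\circ_\cC -: uw \to \cC(a\to c)$.

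The main obstacle is the diagrammatic bookkeeping: the expanded form of $\mate(G)$ contains many strands, and the braiding in $\cV$ appears implicitly when one moves identity factors past non-identity ones in order to apply braided interchange. Conceptually, however, the proof is transparent --- $\mu_{u,w}$ was defined precisely so that enriched composition coherently splits across the tensor factors $\{u\}$ and $\{w\}$, and the two applications of Corollary \ref{cor:Simplify-j-eta-ev} play the role of the two triangle identities for the evaluations $\varepsilon_{a\to b}$ and $\varepsilon_{b\to c}$.
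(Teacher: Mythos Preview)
Your proposal is correct and follows essentially the same route as the paper's proof. Both compute the mate of the proposed diagram via Corollary~\ref{cor:MateUsingTrace}, split the $\eta_{uw}$--$\mu_{u,w}$ fragment into separate units $\eta_u,\eta_w$ using the defining property of the oplaxitor from Lemma~\ref{lem:Oplaxitor}, and then apply Corollary~\ref{cor:Simplify-j-eta-ev} twice together with braided interchange to reduce to $-\circ_\cC-$. In fact you are more explicit than the paper about the $\mu$-splitting step: the paper simply presents the first diagram with $\eta_{\cC(a\to b)}$ and $\eta_{\cC(b\to c)}$ already separated, leaving that reduction to the reader.

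One small ordering remark: the braided interchange is not merely a final clean-up. In the paper's argument it is needed \emph{before} each application of Corollary~\ref{cor:Simplify-j-eta-ev}, to bring the relevant $j$--$\eta$--$\varepsilon$ triple into the precise configuration $(j_a\,\eta)\circ(-\otimes_\cC-)$ then $(-\circ_\cC-)$ with $\varepsilon$ that the corollary requires. So the paper intersperses the interchange moves with the two collapse steps rather than deferring all rearrangement to the end, as your phrasing ``after these simplifications, a rearrangement\ldots'' suggests. This does not affect correctness, only the order of the bookkeeping.
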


We use these examples to prove the following helpful lemma.

\begin{lem}
\label{lem:AdjunctionCompatibility}
Suppose 
$f\in \cC^\cV(a\cF(u)\to b)$ and $g\in \cC^\cV(b\cF(v)\to c)$
have mates under Adjunction \eqref{eq:La-Adjunction} given respectively by 
$\tilde{f}\in \cV(u\to \cC(a\to b))$ and $\tilde{g}\in \cV(v\to \cC(b\to c))$.
Then the mate of $(\tilde{f}\tilde{g})\circ (-\circ_\cC-)$ under Adjunction \eqref{eq:TraceAdjunction} is equal to $(\id_a\mu_{u,v})\circ(f\id_{\cF(v)})\circ g$.
$$
\begin{tikzpicture}[baseline=30,smallstring]
\node (u) at (0,0) {$u$};
\node (v) at (1,0) {$v$};
\node[draw, rectangle, fill=white] (f) at (0,1) {$\tilde{f}$};
\node[draw, rectangle, fill=white] (g)  at (1,1) {$\tilde{g}$};
%
%
\draw (u) to [in= -90,out=90] (f);
\draw (v) to [in= -90,out=90] (g);
\node[draw, rectangle] (circ) at (.5,2) {$-\circ_\cC - $};
\node (a-c) at (.5,3) {$\cC(a\to c)$};
\draw (f) to [in= -90,out=90] (circ.-135);
\draw (g) to [in= -90,out=90] (circ.-45);
\draw (circ) to [in= -90,out=90] (a-c);
\end{tikzpicture}
\longleftrightarrow
\begin{tikzpicture}[baseline=30,smallstring]
\node (a) at (0,0) {$a$};
\node (F) at (2,0) {$\cF(uv)$};
\node[draw,rectangle] (m) at (2,1) {$\mu_{u,v} $};
\draw[double] (F) to[in=-90,out=90] (m.270);
\node[draw,rectangle] (f) at (1,2) {$\quad f\quad$};
\draw (a) to[in=-90,out=90] (f.-135);
\draw (m.135) to[in=-90,out=90] (f.-45);
\node[draw,rectangle] (g) at (2,3) {$\quad g\quad$};
\draw (m.45) to[in=-90,out=90] (g.-45);
\draw (f) to[in=-90,out=90] (g.-135);
\node (c) at (2,4) {$c$};
\draw (g) to[in=-90,out=90] (c);
\end{tikzpicture}
$$
\end{lem}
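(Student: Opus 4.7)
The plan is to reduce the statement to Proposition~\ref{prop:MateOfComposition} via the functoriality of mates (Remark~\ref{rem:Mates}), and then re-assemble the result using naturality of the oplaxitor $\mu$ and the interchange law in the ordinary monoidal category $\cC^\cV$.

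First, I would apply the first half of Remark~\ref{rem:Mates} to the adjunction \eqref{eq:La-Adjunction} (with $\cL_a \dashv \cR_a$): for composable morphisms $h_1, h_2$ in $\cV$, one has $\mate(h_1 \circ h_2) = \cL_a(h_1) \circ \mate(h_2)$, where $\cL_a(h) = \id_a \otimes \cF(h)$. Setting $h_1 = \tilde{f}\tilde{g}$ and $h_2 = -\circ_\cC -$, and substituting the formula for $\mate(-\circ_\cC -)$ given by Proposition~\ref{prop:MateOfComposition}, yields
\[
\mate\bigl((\tilde{f}\tilde{g}) \circ (-\circ_\cC -)\bigr) = \bigl(\id_a\, \cF(\tilde{f}\tilde{g})\bigr) \circ (\id_a\, \mu) \circ (\varepsilon_{a\to b}\, \id) \circ \varepsilon_{b\to c},
\]
where the intermediate oplaxitor is $\mu_{\cC(a\to b),\cC(b\to c)}$.

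Next, naturality of the oplaxitor yields $\cF(\tilde{f}\tilde{g}) \circ \mu_{\cC(a\to b),\cC(b\to c)} = \mu_{u,v} \circ \bigl(\cF(\tilde{f})\, \cF(\tilde{g})\bigr)$. Sliding $\id_a\, \cF(\tilde{f}\tilde{g})$ past $\id_a\, \mu$ using this identity produces
\[
(\id_a\, \mu_{u,v}) \circ \bigl(\id_a\, \cF(\tilde{f})\, \cF(\tilde{g})\bigr) \circ (\varepsilon_{a\to b}\, \id) \circ \varepsilon_{b\to c}.
\]

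Finally, I would collapse the tail into $(f\, \id_{\cF(v)}) \circ g$. Applying the first half of Remark~\ref{rem:Mates} separately to $\tilde{f}$ and $\tilde{g}$ gives the identifications $f = (\id_a\, \cF(\tilde{f})) \circ \varepsilon_{a\to b}$ and $g = (\id_b\, \cF(\tilde{g})) \circ \varepsilon_{b\to c}$. Interchange in the ordinary monoidal category $\cC^\cV$---specifically, commuting $\id_{a\{a\to b\}}\, \cF(\tilde{g})$ past $\varepsilon_{a\to b}\, \id$---then repackages the three-fold composite into the required form. The main obstacle is careful bookkeeping of domains and codomains through this interchange step; no braiding in $\cV$ is needed here because $\cC^\cV$ is an ordinary monoidal category, so bifunctoriality of the tensor product suffices.
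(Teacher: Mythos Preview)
Your proposal is correct and follows essentially the same route as the paper: apply Remark~\ref{rem:Mates} together with Proposition~\ref{prop:MateOfComposition}, use naturality of $\mu$ to split $\cF(\tilde f\tilde g)$, and then recognise $\cL_a(\tilde f)\circ\varepsilon_{a\to b}=f$ and $\cL_b(\tilde g)\circ\varepsilon_{b\to c}=g$. The only difference is cosmetic: the paper absorbs your explicit interchange step into the string-diagram rewriting, whereas you spell it out as bifunctoriality in $\cC^\cV$.
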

\begin{proof}
By Remark \ref{rem:Mates} and Proposition \ref{prop:MateOfComposition}, the mate of $(\tilde{f}\tilde{g})\circ(-\circ_\cC-)$ is given by
$$
\begin{tikzpicture}[baseline=40,smallstring]
\node (a) at (0,-1) {$a$};
\node (F) at (2,-1) {$\cF(uv)$};
\node[draw,rectangle] (m) at (2,1) {$\mu_{u,v} $};
\draw[double] (F) to[in=-90,out=90] (m.270);
\node[draw,rectangle, fill=white] (G) at (2,0) {$\cF(\tilde{f}\tilde{g})$};
\node[draw,rectangle] (ev-a-b) at (1,2) {$\varepsilon_{a\to b}$};
\draw (a) to[in=-90,out=90] (ev-a-b.-135);
\draw (m.135) to[in=-90,out=90] (ev-a-b.-45);
\node[draw,rectangle] (ev-b-c) at (2,3) {$\varepsilon_{b\to c}$};
\draw (m.45) to[in=-90,out=90] (ev-b-c.-45);
\draw (ev-a-b) to[in=-90,out=90] (ev-b-c.-135);
\node (c) at (2,4) {$c$};
\draw (ev-b-c) to[in=-90,out=90] (c);
\end{tikzpicture}
=
\begin{tikzpicture}[baseline=40,smallstring]
\node (a) at (0,-1) {$a$};
\node (F) at (3,-1) {$\cF(uv)$};
\node[draw,rectangle] (m) at (3,0) {$\mu_{u,v} $};
\draw[double] (F) to[in=-90,out=90] (m.270);
\node[draw,rectangle, fill=white] (G-f) at (2,1) {$\cF(\tilde{f})$};
\node[draw,rectangle] (ev-a-b) at (1,2) {$\varepsilon_{a\to b}$};
\draw (a) to[in=-90,out=90] (ev-a-b.-135);
\draw (m.135) to[in=-90,out=90] (G-f);
\draw (G-f) to[in=-90,out=90] (ev-a-b.-45);
\node[draw,rectangle] (ev-b-c) at (3,3.5) {$\varepsilon_{b\to c}$};
\draw (m.45) to[in=-90,out=90] (ev-b-c.-45);
\node[draw,rectangle, fill=white] (G-g) at (3,2) {$\cF(\tilde{g})$};
\draw (ev-a-b) to[in=-90,out=90] (ev-b-c.-135);
\node (c) at (3,4.5) {$c$};
\draw (ev-b-c) to[in=-90,out=90] (c);
\end{tikzpicture}
=
\begin{tikzpicture}[baseline=40,smallstring]
\node (a) at (0,0) {$a$};
\node (F) at (2,0) {$\cF(uv)$};
\node[draw,rectangle] (m) at (2,1) {$\mu_{u,v} $};
\draw[double] (F) to[in=-90,out=90] (m.270);
\node[draw,rectangle] (f) at (1,2) {$\quad f\quad$};
\draw (a) to[in=-90,out=90] (f.-135);
\draw (m.135) to[in=-90,out=90] (f.-45);
\node[draw,rectangle] (g) at (2,3) {$\quad g\quad$};
\draw (m.45) to[in=-90,out=90] (g.-45);
\draw (f) to[in=-90,out=90] (g.-135);
\node (c) at (2,4) {$c$};
\draw (g) to[in=-90,out=90] (c);
\end{tikzpicture}
.
$$
In the second equality above, we again used two instances of Remark \ref{rem:Mates} for the equalities $\cL_a(\tilde{f})\circ\varepsilon_{a\to b}=f$ and $\cL_b(\tilde{g})\circ \varepsilon_{b\to c}=g$.
\end{proof}

\section{Braided oplax monoidal functors from enriched monoidal categories}
\label{sec:extracting-oplax}

As in the previous section, we assume $\Tr_\cV: \cC^\cV \to \cV$ has left adjoint $\cF : \cV \to \cC^\cV$, and that $\cC$ is rigid, so $\cC^\cV$ is also rigid by Lemma \ref{lem:CVrigid}.
We now show that we can lift $\cF$ to a braided oplax monoidal functor $\cF^{\scriptscriptstyle Z}:\cV\to Z(\cC^\cV)$.

\subsection{Half-braidings}

First, we need half-braidings $e_{c,\cF(v)}:
c\cF(v)\to \cF(v)c$.
The following lemma is in the spirit of \cite[Prop.\,5]{MR2506324} and \cite{1509.02937}, but we need only work in the rigid setting rather than the pivotal setting.

\begin{defn}
For $c\in \cC$ and $v\in \cV$, we define the half-braiding $e_{c,\cF(v)} \in \cC^\cV(c\cF(v)\to \cF(v)c)$ as the mate of the map
\begin{equation}
\label{eq:MateOfHalfBraiding}
v \xrightarrow{\eta_v j_{c}} \cC(1\to \cF(v))\cC(c\to c) \xrightarrow{-\otimes_\cC -} \cC(c\to \cF(v)c)
\end{equation}
in $\cV(v\to \cC(c\to \cF(v)c))=\cV(v\to \cR_c(\cF(v)c))$ under Adjunction \eqref{eq:La-Adjunction}.
\end{defn}

\begin{lem}
\label{lem:HalfBraidings}
The maps $e_{c,\cF(v)}$ are half-braidings for $\cF(v)$, i.e., they are natural isomorphisms which satisfy the hexagon axiom $e_{bc,\cF(v)}=(\id_b e_{c,\cF(v)})\circ (e_{b,\cF(v)} \id_c)$.
\end{lem}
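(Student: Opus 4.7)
The plan is to verify naturality, invertibility, and the hexagon axiom by pushing each statement from $\cC^\cV$ over to $\cV$ via Adjunction \eqref{eq:La-Adjunction}. The main computational tool is Corollary \ref{cor:MateUsingTrace}, which expresses the mate of any $f \in \cC^\cV(a\cF(v) \to b)$ as the string diagram combining $j_a$, $\eta_v$, and $f$ through $-\otimes_\cC-$ and $-\circ_\cC-$. Unwinding this at $a=c$, $b = \cF(v)c$, $f = e_{c,\cF(v)}$ yields the key defining identity: composing the mate of $\id_{c\cF(v)}$ (namely $(j_c \eta_v) \circ (-\otimes_\cC-)$) with $\Tr_\cV(e_{c,\cF(v)})$ recovers $(\eta_v j_c) \circ (-\otimes_\cC-)$. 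Intuitively, $e_{c,\cF(v)}$ is exactly the morphism in $\cC^\cV$ that permits ``pulling $\eta_v$ from the right of $j_c$ to the left'' inside such diagrams.

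For naturality in $c$: given $f \in \cC^\cV(c \to c')$, I would pass both sides of $(f\,\id_{\cF(v)}) \circ e_{c',\cF(v)} = e_{c,\cF(v)} \circ (\id_{\cF(v)}\, f)$ to their mates in $\cV(v \to \cC(c \to \cF(v) c'))$. Applying the defining identity of $e$ (at $c'$ on one side, at $c$ on the other) together with associativity of $-\circ_\cC-$ reduces both mates to a common expression built from $\eta_v$, $f$, and $-\otimes_\cC-$. Naturality in $v$ is analogous, using naturality of $\eta$. For invertibility, I would construct $e^{-1}_{c,\cF(v)} \in \cC^\cV(\cF(v) c \to c\cF(v))$ via the composite adjunction $\cC^\cV(\cF(v) c \to c\cF(v)) \cong \cC^\cV(\cF(v) \to c\cF(v)c^*) \cong \cV(v \to \cC(1_\cC \to c \cF(v) c^*))$ coming from rigidity of $\cC$ and Adjunction \eqref{eq:TraceAdjunction}, taking the preimage of the map built from $j_c$, $\coev_c$, $\eta_v$, and the inverse braiding $\beta^{-1}$. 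The two zig-zag composites then reduce, via the defining identity of $e$ and the zig-zag axioms for $c$, to the appropriate identity elements.

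The main effort is the hexagon axiom. I would take mates under Adjunction \eqref{eq:La-Adjunction} with $a = bc$, giving elements of $\cV(v \to \cC(bc \to \cF(v) bc))$. The LHS mate is $(\eta_v j_{bc}) \circ (-\otimes_\cC-)$ by definition. For the RHS, apply Corollary \ref{cor:MateUsingTrace} to the composite $(\id_b e_{c,\cF(v)}) \circ (e_{b,\cF(v)} \id_c)$, split $j_{bc} = (j_b j_c) \circ (-\otimes_\cC-)$ via unitality, and then apply the defining identity of $e_{b,\cF(v)}$ (to pull $\eta_v$ past $j_b$) followed by that of $e_{c,\cF(v)}$ (to pull it past $j_c$). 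Associativity of $-\otimes_\cC-$ together with the braided interchange relation \eqref{eq:BraidedInterchange} should then collapse the resulting diagram to $(\eta_v j_{bc}) \circ (-\otimes_\cC-)$.

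The hard part will be managing the braidings in $\cV$ that arise from the two applications of the defining identity of $e$: each such application introduces an implicit swap of $\eta_v$ past a $j_?$, and these must be shown to cancel correctly under the braided interchange to yield the LHS mate. One might try to invoke Lemma \ref{lem:equationsInBraids} to reduce the computation to an equation in the braid group, but because the full diagram involves the non-braid data $\eta_v$ (appearing once rather than symmetrically on both sides) as well as the $-\otimes_\cC-$ and $-\circ_\cC-$ morphisms, a direct diagrammatic manipulation using the two defining identities is likely to be cleaner than a reduction to a braid-group identity.
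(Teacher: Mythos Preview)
Your overall strategy of passing to mates under Adjunction \eqref{eq:La-Adjunction} is exactly what the paper does, and your naturality argument lines up with theirs. Two points of divergence are worth noting.

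First, your proposed treatment of invertibility is more laborious than necessary, and the appearance of $\beta^{-1}$ is a red herring: the half-braiding $e_{c,\cF(v)}$ is defined purely in terms of $-\otimes_\cC-$ and the unit $\eta_v$, with no braiding from $\cV$ entering, so there is no $\beta$ to invert. The paper instead first checks the easy fact $e_{1_\cC,\cF(v)} = \id_{\cF(v)}$ (immediate since the mate of $\eta_v$ is $\id_{\cF(v)}$), and then observes that invertibility follows \emph{formally} from naturality, the hexagon axiom, this unit condition, and rigidity of $\cC^\cV$: one writes down the candidate inverse as a bent version of $e_{c_*,\cF(v)}$ using (co)evaluations, and the hexagon plus naturality reduce the composite $e_{c_*,\cF(v)} \cdot e_{c,\cF(v)}$ to $e_{c_* c,\cF(v)}$, which the zig-zag and naturality collapse to an identity. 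This is the standard argument that any natural family satisfying the hexagon and the unit condition is automatically invertible in a rigid category, and it avoids constructing anything explicitly.

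Second, your worry about ``managing the braidings'' in the hexagon proof is overstated. The paper's computation starts from the mate $(\eta_v j_{bc})\circ(-\otimes_\cC-)$, splits $j_{bc}$ as $(j_b j_c)\circ(-\otimes_\cC-)$, uses associativity of $-\otimes_\cC-$, and then applies Corollary \ref{cor:MateUsingTrace} \emph{once} (for $e_{b,\cF(v)}$, say) to insert a single $-\circ_\cC-$. A single application of the braided interchange (after padding with one identity element) reorganises this into precisely the mate of $(\id_b e_{c,\cF(v)})\circ (e_{b,\cF(v)}\id_c)$. There is no accumulation of swaps to track: the braided interchange is used exactly once, and there is no need to invoke the defining identity of $e$ twice and then cancel the resulting braidings. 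Lemma \ref{lem:equationsInBraids} is indeed not relevant here.
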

\begin{proof}
First, it is easily verified that $e_{1, \cF(v)} = \id_{\cF(v)}$, since the mate of $\eta_v$ is exactly $\id_{\cF(v)}$.

Next, we verify naturality.
Suppose that $f\in \cC^\cV(a\to b) = \cV(1_\cV\to \cC(a\to b))$.
Then the mate of $e_{\cF(v),a} \circ (\id_{\cF(v)}f)$ is given by
$$
\begin{tikzpicture}[baseline=40,smallstring]
\node (v) at (0,0) {$v$};
\node (1-1) at (1,0) {$\iV$};
\node (1-2) at (2,0) {$\iV$};
\node (1-3) at (3,0) {$\iV$};
\node[draw,rectangle] (eta-v) at (0,1) {$\eta_{v}$};
\node[draw,rectangle] (j-a) at (1,1) {$j_{a}$};
\node[draw,rectangle] (j-Fv) at (2,1) {$j_{\cF(v)}$};
\node[draw,rectangle] (f) at (3,1) {$f$};
\node[draw,rectangle] (otimes-L) at (.5,2) {$-\otimes_\cC -$};
\node[draw,rectangle] (otimes-R) at (2.5,2) {$-\otimes_\cC -$};
\node[draw,rectangle] (circ) at (1.5,3) {$-\circ_\cC -$};
\node (top) at (1.5,4) {$\cC(a\to\cF(v)b)$};
\draw (v) to[in=-90,out=90] (eta-v);
\draw (eta-v) to[in=-90,out=90] (otimes-L.-135);
\draw (j-a) to[in=-90,out=90] (otimes-L.-45);
\draw (otimes-L) to[in=-90,out=90] (circ.-135);
\draw (j-Fv) to[in=-90,out=90] (otimes-R.-135);
\draw (f) to[in=-90,out=90] (otimes-R.-45);
\draw (otimes-R) to[in=-90,out=90] (circ.-45);
\draw (circ) to[in=-90,out=90] (top);
\end{tikzpicture}
\,\,=\,\,
\begin{tikzpicture}[baseline=40,smallstring]
\node (v) at (0,0) {$v$};
\node (1-1) at (1,0) {$\iV$};
\node (1-2) at (2,0) {$\iV$};
\node (1-3) at (3,0) {$\iV$};
\node[draw,rectangle] (eta-v) at (0,1) {$\eta_{v}$};
\node[draw,rectangle] (j-Fv) at (1,1) {$j_{\cF(v)}$};
\node[draw,rectangle] (j-a) at (2,1) {$j_{a}$};
\node[draw,rectangle] (f) at (3,1) {$f$};
\node[draw,rectangle] (circ-L) at (.5,2) {$-\circ_\cC -$};
\node[draw,rectangle] (circ-R) at (2.5,2) {$-\circ_\cC -$};
\node[draw,rectangle] (otimes) at (1.5,3) {$-\otimes_\cC -$};
\node (top) at (1.5,4) {$\cC(a\to\cF(v)b)$};
\draw (v) to[in=-90,out=90] (eta-v);
\draw (eta-v) to[in=-90,out=90] (circ-L.-135);
\draw (j-Fv) to[in=-90,out=90] (circ-L.-45);
\draw (circ-L) to[in=-90,out=90] (otimes.-135);
\draw (j-a) to[in=-90,out=90] (circ-R.-135);
\draw (f) to[in=-90,out=90] (circ-R.-45);
\draw (circ-R) to[in=-90,out=90] (otimes.-45);
\draw (otimes) to[in=-90,out=90] (top);
\end{tikzpicture}
\,\,=\,\,
\begin{tikzpicture}[baseline=40,smallstring]
\node (v) at (0,0) {$v$};
\node (1-1) at (1,0) {$\iV$};
\node[draw,rectangle] (eta-v) at (0,1) {$\eta_{v}$};
\node[draw,rectangle] (f) at (1,1) {$f$};
\node[draw,rectangle] (otimes) at (.5,2) {$-\otimes_\cC -$};
\node (top) at (.5,3) {$\cC(a\to\cF(v)b)$};
\draw (v) to[in=-90,out=90] (eta-v);
\draw (eta-v) to[in=-90,out=90] (otimes.-135);
\draw (f) to[in=-90,out=90] (otimes.-45);
\draw (otimes) to[in=-90,out=90] (top);
\end{tikzpicture}
$$
using the exchange relation.
We now add identity elements and use the exchange relation, followed by Corollary \ref{cor:MateUsingTrace} for $e_{b,\cF(v)}$ to obtain
$$
\begin{tikzpicture}[baseline=40,smallstring]
\node (1-1) at (0,0) {$\iV$};
\node (v) at (1,0) {$v$};
\node (1-2) at (2,0) {$\iV$};
\node (1-3) at (3,0) {$\iV$};
\node[draw,rectangle] (j-1) at (0,1) {$j_{1_\cC}$};
\node[draw,rectangle] (eta-v) at (1,1) {$\eta_{v}$};
\node[draw,rectangle] (f) at (2,1) {$f$};
\node[draw,rectangle] (j-b) at (3,1) {$j_{b}$};
\node[draw,rectangle] (circ-L) at (.5,2) {$-\circ_\cC -$};
\node[draw,rectangle] (circ-R) at (2.5,2) {$-\circ_\cC -$};
\node[draw,rectangle] (otimes) at (1.5,3) {$-\otimes_\cC -$};
\node (top) at (1.5,4) {$\cC(a\to\cF(v)b)$};
\draw (v) to[in=-90,out=90] (eta-v);
\draw (j-1) to[in=-90,out=90] (circ-L.-135);
\draw (eta-v) to[in=-90,out=90] (circ-L.-45);
\draw (circ-L) to[in=-90,out=90] (otimes.-135);
\draw (f) to[in=-90,out=90] (circ-R.-135);
\draw (j-b) to[in=-90,out=90] (circ-R.-45);
\draw (circ-R) to[in=-90,out=90] (otimes.-45);
\draw (otimes) to[in=-90,out=90] (top);
\end{tikzpicture}
\,\,=\,\,
\begin{tikzpicture}[baseline=40,smallstring]
\node (1-1) at (0,0) {$\iV$};
\node (1-2) at (1,0) {$\iV$};
\node (v) at (2,0) {$v$};
\node (1-3) at (3,0) {$\iV$};
\node[draw,rectangle] (j-1) at (0,1) {$j_{1_\cC}$};
\node[draw,rectangle] (f) at (1,1) {$f$};
\node[draw,rectangle] (eta-v) at (2,1) {$\eta_{v}$};
\node[draw,rectangle] (j-b) at (3,1) {$j_b$};
\node[draw,rectangle] (otimes-L) at (.5,2) {$-\otimes_\cC -$};
\node[draw,rectangle] (otimes-R) at (2.5,2) {$-\otimes_\cC -$};
\node[draw,rectangle] (circ) at (1.5,3) {$-\circ_\cC -$};
\node (top) at (1.5,4) {$\cC(a\to\cF(v)b)$};
\draw (v) to[in=-90,out=90] (eta-v);
\draw (j-1) to[in=-90,out=90] (otimes-L.-135);
\draw (f) to[in=-90,out=90] (otimes-L.-45);
\draw (otimes-L) to[in=-90,out=90] (circ.-135);
\draw (eta-v) to[in=-90,out=90] (otimes-R.-135);
\draw (j-b) to[in=-90,out=90] (otimes-R.-45);
\draw (otimes-R) to[in=-90,out=90] (circ.-45);
\draw (circ) to[in=-90,out=90] (top);
\end{tikzpicture}
\,\,=\,\,
\begin{tikzpicture}[baseline=40,smallstring]
\node (1-1) at (0,0) {$\iV$};
\node (1-2) at (1,0) {$\iV$};
\node (v) at (2,0) {$v$};
\node (1-3) at (3.5,0) {$\iV$};
\node[draw,rectangle] (f) at (0,1) {$f$};
\node[draw,rectangle] (j-b) at (1,1) {$j_b$};
\node[draw,rectangle] (eta-v) at (2,1) {$\eta_{v}$};
\node[draw,rectangle] (e) at (3.5,1) {$e_{b,\cF(v)}$};
\node[draw,rectangle] (otimes) at (1.5,2) {$-\otimes_\cC -$};
\node[draw,rectangle] (circ-1) at (2,3) {$-\circ_\cC -$};
\node[draw,rectangle] (circ-2) at (1.5,4) {$-\circ_\cC -$};
\node (top) at (1.5,5) {$\cC(a\to \cF(v)b)$};
\draw (v) to[in=-90,out=90] (eta-v);
\draw (f) to[in=-90,out=90] (circ-2.-135);
\draw (j-b) to[in=-90,out=90] (otimes.-135);
\draw (eta-v) to[in=-90,out=90] (otimes.-45);
\draw (otimes) to[in=-90,out=90] (circ-1.-135);
\draw (e) to[in=-90,out=90] (circ-1.-45);
\draw (circ-1) to[in=-90,out=90] (circ-2.-45);
\draw (circ-2) to[in=-90,out=90] (top);
\end{tikzpicture}
$$
Now applying associativity of composition and applying another exchange relation (after adding an identity), we obtain
$$
\begin{tikzpicture}[baseline=40,smallstring]
\node (1-1) at (0,0) {$\iV$};
\node (1-2) at (1,0) {$\iV$};
\node (v) at (2,0) {$v$};
\node (1-3) at (3.5,0) {$\iV$};
\node[draw,rectangle] (f) at (0,1) {$f$};
\node[draw,rectangle] (j-b) at (1,1) {$j_b$};
\node[draw,rectangle] (eta-v) at (2,1) {$\eta_{v}$};
\node[draw,rectangle] (e) at (3.5,1) {$e_{b,\cF(v)}$};
\node[draw,rectangle] (otimes) at (1.5,2) {$-\otimes_\cC -$};
\node[draw,rectangle] (circ-1) at (1,3) {$-\circ_\cC -$};
\node[draw,rectangle] (circ-2) at (2,4) {$-\circ_\cC -$};
\node (top) at (2,5) {$\cC(a\to \cF(v)b)$};
\draw (v) to[in=-90,out=90] (eta-v);
\draw (f) to[in=-90,out=90] (circ-1.-135);
\draw (j-b) to[in=-90,out=90] (otimes.-135);
\draw (eta-v) to[in=-90,out=90] (otimes.-45);
\draw (otimes) to[in=-90,out=90] (circ-1.-45);
\draw (e) to[in=-90,out=90] (circ-2.-45);
\draw (circ-1) to[in=-90,out=90] (circ-2.-135);
\draw (circ-2) to[in=-90,out=90] (top);
\end{tikzpicture}
\,\,=\,\,
\begin{tikzpicture}[baseline=40,smallstring]
\node (1-1) at (0,0) {$\iV$};
\node (1-2) at (1,0) {$\iV$};
\node (1-3) at (2,0) {$\iV$};
\node (v) at (3,0) {$v$};
\node (1-4) at (4.5,0) {$\iV$};
\node[draw,rectangle] (f) at (0,1) {$f$};
\node[draw,rectangle] (j-b) at (1,1) {$j_b$};
\node[draw,rectangle] (j-1) at (2,1) {$j_{1_\cC}$};
\node[draw,rectangle] (eta-v) at (3,1) {$\eta_{v}$};
\node[draw,rectangle] (e) at (4.5,1) {$e_{b,\cF(v)}$};
\node[draw,rectangle] (circ-L) at (.5,2) {$-\circ_\cC -$};
\node[draw,rectangle] (circ-R) at (2.5,2) {$-\circ_\cC -$};
\node[draw,rectangle] (otimes) at (1.5,3) {$-\otimes_\cC -$};
\node[draw,rectangle] (circ-top) at (2,4) {$-\circ_\cC -$};
\node (top) at (2,5) {$\cC(a\to \cF(v)b)$};
\draw (v) to[in=-90,out=90] (eta-v);
\draw (f) to[in=-90,out=90] (circ-L.-135);
\draw (j-b) to[in=-90,out=90] (circ-L.-45);
\draw (j-1) to[in=-90,out=90] (circ-R.-135);
\draw (eta-v) to[in=-90,out=90] (circ-R.-45);
\draw (circ-L) to[in=-90,out=90] (otimes.-135);
\draw (circ-R) to[in=-90,out=90] (otimes.-45);
\draw (e) to[in=-90,out=90] (circ-top.-45);
\draw (otimes) to[in=-90,out=90] (circ-top.-135);
\draw (circ-top) to[in=-90,out=90] (top);
\end{tikzpicture}
\,\,=\,\,
\begin{tikzpicture}[baseline=40,smallstring]
\node (1-1) at (0,0) {$\iV$};
\node (v) at (1,0) {$v$};
\node (1-2) at (2.5,0) {$\iV$};
\node[draw,rectangle] (f) at (0,1) {$f$};
\node[draw,rectangle] (eta-v) at (1,1) {$\eta_{v}$};
\node[draw,rectangle] (e) at (2.5,1) {$e_{b,\cF(v)}$};
\node[draw,rectangle] (otimes) at (.5,2) {$-\otimes_\cC -$};
\node[draw,rectangle] (circ) at (1,3) {$-\circ_\cC -$};
\node (top) at (1,4) {$\cC(a\to \cF(v)b)$};
\draw (v) to[in=-90,out=90] (eta-v);
\draw (f) to[in=-90,out=90] (otimes.-135);
\draw (eta-v) to[in=-90,out=90] (otimes.-45);
\draw (otimes) to[in=-90,out=90] (circ.-135);
\draw (e) to[in=-90,out=90] (circ.-45);
\draw (circ) to[in=-90,out=90] (top);
\end{tikzpicture}
$$
Finally, we claim that the right hand side above is exactly the mate of $(f\id_{\cF(v)})\circ e_{b,\cF(v)}$.
Indeed, taking its mate using Remark \ref{rem:Mates} and Corollary \ref{cor:MateUsingTrace} and applying the exchange relation, we get
$$
\begin{tikzpicture}[baseline=40,smallstring]
\node (1-1) at (0,0) {$\iV$};
\node (v) at (1,0) {$v$};
\node (1-2) at (2,0) {$\iV$};
\node (1-3) at (3,0) {$\iV$};
\node (1-4) at (4.5,0) {$\iV$};
\node[draw,rectangle] (j-a) at (0,1) {$j_a$};
\node[draw,rectangle] (eta-v) at (1,1) {$\eta_{v}$};
\node[draw,rectangle] (f) at (2,1) {$f$};
\node[draw,rectangle] (j-Fv) at (3,1) {$j_{\cF(v)}$};
\node[draw,rectangle] (e) at (4.5,1) {$e_{b,\cF(v)}$};
\node[draw,rectangle] (otimes-L) at (.5,2) {$-\otimes_\cC -$};
\node[draw,rectangle] (otimes-R) at (2.5,2) {$-\otimes_\cC -$};
\node[draw,rectangle] (circ) at (1.5,3) {$-\circ_\cC -$};
\node[draw,rectangle] (circ-top) at (2,4) {$-\circ_\cC -$};
\node (top) at (2,5) {$\cC(a\to \cF(v)b)$};
\draw (v) to[in=-90,out=90] (eta-v);
\draw (j-a) to[in=-90,out=90] (otimes-L.-135);
\draw (eta-v) to[in=-90,out=90] (otimes-L.-45);
\draw (f) to[in=-90,out=90] (circ-R.-135);
\draw (j-Fv) to[in=-90,out=90] (circ-R.-45);
\draw (otimes-L) to[in=-90,out=90] (circ.-135);
\draw (otimes-R) to[in=-90,out=90] (circ.-45);
\draw (e) to[in=-90,out=90] (circ-top.-45);
\draw (circ) to[in=-90,out=90] (circ-top.-135);
\draw (circ-top) to[in=-90,out=90] (top);
\end{tikzpicture}
\,\,=\,\,
\begin{tikzpicture}[baseline=40,smallstring]
\node (1-1) at (0,0) {$\iV$};
\node (1-2) at (1,0) {$\iV$};
\node (v) at (2,0) {$v$};
\node (1-3) at (3,0) {$\iV$};
\node (1-4) at (4.5,0) {$\iV$};
\node[draw,rectangle] (j-a) at (0,1) {$j_a$};
\node[draw,rectangle] (f) at (1,1) {$f$};
\node[draw,rectangle] (eta-v) at (2,1) {$\eta_{v}$};
\node[draw,rectangle] (j-Fv) at (3,1) {$j_{\cF(v)}$};
\node[draw,rectangle] (e) at (4.5,1) {$e_{b,\cF(v)}$};
\node[draw,rectangle] (circ-L) at (.5,2) {$-\circ_\cC -$};
\node[draw,rectangle] (circ-R) at (2.5,2) {$-\circ_\cC -$};
\node[draw,rectangle] (otimes) at (1.5,3) {$-\otimes_\cC -$};
\node[draw,rectangle] (circ-top) at (2,4) {$-\circ_\cC -$};
\node (top) at (2,5) {$\cC(a\to \cF(v)b)$};
\draw (v) to[in=-90,out=90] (eta-v);
\draw (j-a) to[in=-90,out=90] (circ-L.-135);
\draw (f) to[in=-90,out=90] (circ-L.-45);
\draw (eta-v) to[in=-90,out=90] (circ-R.-135);
\draw (j-Fv) to[in=-90,out=90] (circ-R.-45);
\draw (circ-L) to[in=-90,out=90] (otimes.-135);
\draw (circ-R) to[in=-90,out=90] (otimes.-45);
\draw (e) to[in=-90,out=90] (circ-top.-45);
\draw (otimes) to[in=-90,out=90] (circ-top.-135);
\draw (circ-top) to[in=-90,out=90] (top);
\end{tikzpicture}
\,\,=\,\,
\begin{tikzpicture}[baseline=40,smallstring]
\node (1-1) at (0,0) {$\iV$};
\node (v) at (1,0) {$v$};
\node (1-2) at (2.5,0) {$\iV$};
\node[draw,rectangle] (f) at (0,1) {$f$};
\node[draw,rectangle] (eta-v) at (1,1) {$\eta_{v}$};
\node[draw,rectangle] (e) at (2.5,1) {$e_{b,\cF(v)}$};
\node[draw,rectangle] (otimes) at (.5,2) {$-\otimes_\cC -$};
\node[draw,rectangle] (circ) at (1,3) {$-\circ_\cC -$};
\node (top) at (1,4) {$\cC(a\to \cF(v)b)$};
\draw (v) to[in=-90,out=90] (eta-v);
\draw (f) to[in=-90,out=90] (otimes.-135);
\draw (eta-v) to[in=-90,out=90] (otimes.-45);
\draw (otimes) to[in=-90,out=90] (circ.-135);
\draw (e) to[in=-90,out=90] (circ.-45);
\draw (circ) to[in=-90,out=90] (top);
\end{tikzpicture}
$$

Next, we verify the hexagon axiom.
Since $j_{ab}=(j_a j_b) \circ(-\otimes_\cC -)$, the mate of $e_{ab,\cF(v)}$ is equal to
$$
\begin{tikzpicture}[baseline=40,smallstring]
\node (v) at (0,0) {$v$};
\node (1-1) at (1,0) {$\iV$};
\node (1-2) at (2,0) {$\iV$};
\node[draw,rectangle] (eta-v) at (0,1) {$\eta_{v}$};
\node[draw,rectangle] (j-a) at (1,1) {$j_{a}$};
\node[draw,rectangle] (j-b) at (2,1) {$j_{b}$};
\node[draw,rectangle] (otimes-1) at (1.5,2) {$-\otimes_\cC -$};
\node[draw,rectangle] (otimes-2) at (1,3) {$-\otimes_\cC -$};
\node (top) at (1,4) {$\cC(ab\to ab\cF(v))$};
\draw (v) to[in=-90,out=90] (eta-v);
\draw (j-a) to[in=-90,out=90] (otimes-1.-135);
\draw (j-b) to[in=-90,out=90] (otimes-1.-45);
\draw (otimes-1) to[in=-90,out=90] (otimes-2.-45);
\draw (eta-v) to[in=-90,out=90] (otimes-2.-135);
\draw (otimes-2) to[in=-90,out=90] (top);
\end{tikzpicture}
\,\,=\,\,
\begin{tikzpicture}[baseline=40,smallstring]
\node (v) at (0,0) {$v$};
\node (1-1) at (1,0) {$\iV$};
\node (1-2) at (2,0) {$\iV$};
\node[draw,rectangle] (eta-v) at (0,1) {$\eta_{v}$};
\node[draw,rectangle] (j-a) at (1,1) {$j_{a}$};
\node[draw,rectangle] (j-b) at (2,1) {$j_{b}$};
\node[draw,rectangle] (otimes-1) at (.5,2) {$-\otimes_\cC -$};
\node[draw,rectangle] (otimes-2) at (1,3) {$-\otimes_\cC -$};
\node (top) at (1,4) {$\cC(ab\to ab\cF(v))$};
\draw (v) to[in=-90,out=90] (eta-v);
\draw (eta-v) to[in=-90,out=90] (otimes-1.-135);
\draw (j-a) to[in=-90,out=90] (otimes-1.-45);
\draw (otimes-1) to[in=-90,out=90] (otimes-2.-135);
\draw (j-b) to[in=-90,out=90] (otimes-2.-45);
\draw (otimes-2) to[in=-90,out=90] (top);
\end{tikzpicture}
\,\,=\,\,
\begin{tikzpicture}[baseline=40,smallstring]
\node (1-1) at (0,0) {$\iV$};
\node (v) at (1,0) {$v$};
\node (1-2) at (2.5,0) {$\iV$};
\node (1-3) at (4,0) {$\iV$};
\node[draw,rectangle] (j-a) at (0,1) {$j_{a}$};
\node[draw,rectangle] (eta-v) at (1,1) {$\eta_{v}$};
\node[draw,rectangle] (e) at (2.5,1) {$e_{a,\cF(v)}$};
\node[draw,rectangle] (j-b) at (4,1) {$j_{b}$};
\node[draw,rectangle] (otimes-1) at (.5,2) {$-\otimes_\cC -$};
\node[draw,rectangle] (circ) at (1,3) {$-\circ_\cC -$};
\node[draw,rectangle] (otimes-2) at (2,4) {$-\otimes_\cC -$};
\node (top) at (2,5) {$\cC(ab\to ab\cF(v))$};
\draw (v) to[in=-90,out=90] (eta-v);
\draw (j-a) to[in=-90,out=90] (otimes-1.-135);
\draw (eta-v) to[in=-90,out=90] (otimes-1.-45);
\draw (otimes-1) to[in=-90,out=90] (circ.-135);
\draw (e) to[in=-90,out=90] (circ.-45);
\draw (circ) to[in=-90,out=90] (otimes-2.-135);
\draw (j-b) to[in=-90,out=90] (otimes-2.-45);
\draw (otimes-2) to[in=-90,out=90] (top);
\end{tikzpicture}
$$
The first equality above follows by associativity of tensor product, and the second follows by Corollary \ref{cor:MateUsingTrace} applied to $e_{a,\cF(v)}$.
Applying the exchange relation (after adding an identity) and then using associativity of tensor product again, we obtain
$$
\begin{tikzpicture}[baseline=50,smallstring]
\node (1-1) at (0,0) {$\iV$};
\node (v) at (1,0) {$v$};
\node (1-2) at (2,0) {$\iV$};
\node (1-3) at (3.5,0) {$\iV$};
\node (1-4) at (5,0) {$\iV$};
\node[draw,rectangle] (j-a) at (0,1) {$j_{a}$};
\node[draw,rectangle] (eta-v) at (1,1) {$\eta_{v}$};
\node[draw,rectangle] (j-b-1) at (2,1) {$j_{b}$};
\node[draw,rectangle] (e) at (3.5,1) {$e_{a,\cF(v)}$};
\node[draw,rectangle] (j-b-2) at (5,1) {$j_{b}$};
\node[draw,rectangle] (otimes-1) at (.5,2) {$-\otimes_\cC -$};
\node[draw,rectangle] (otimes-2) at (4.25,2) {$-\otimes_\cC -$};
\node[draw,rectangle] (otimes-3) at (1,3) {$-\otimes_\cC -$};
\node[draw,rectangle] (circ) at (2,4) {$-\circ_\cC -$};
\node (top) at (2,5) {$\cC(ab\to ab\cF(v))$};
\draw (v) to[in=-90,out=90] (eta-v);
\draw (j-a) to[in=-90,out=90] (otimes-1.-135);
\draw (eta-v) to[in=-90,out=90] (otimes-1.-45);
\draw (otimes-1) to[in=-90,out=90] (otimes-3.-135);
\draw (j-b-1) to[in=-90,out=90] (otimes-3.-45);
\draw (otimes-3) to[in=-90,out=90] (circ.-135);
\draw (e) to[in=-90,out=90] (otimes-2.-135);
\draw (j-b-2) to[in=-90,out=90] (otimes-2.-45);
\draw (otimes-2) to[in=-90,out=90] (circ.-45);
\draw (circ) to[in=-90,out=90] (top);
\end{tikzpicture}
\,\,=\,\,
\begin{tikzpicture}[baseline=50,smallstring]
\node (1-1) at (0,0) {$\iV$};
\node (v) at (1,0) {$v$};
\node (1-2) at (2,0) {$\iV$};
\node (1-3) at (3.5,0) {$\iV$};
\node (1-4) at (5,0) {$\iV$};
\node[draw,rectangle] (j-a) at (0,1) {$j_{a}$};
\node[draw,rectangle] (eta-v) at (1,1) {$\eta_{v}$};
\node[draw,rectangle] (j-b-1) at (2,1) {$j_{b}$};
\node[draw,rectangle] (e) at (3.5,1) {$e_{a,\cF(v)}$};
\node[draw,rectangle] (j-b-2) at (5,1) {$j_{b}$};
\node[draw,rectangle] (otimes-1) at (1.5,2) {$-\otimes_\cC -$};
\node[draw,rectangle] (otimes-2) at (4.25,2) {$-\otimes_\cC -$};
\node[draw,rectangle] (otimes-3) at (1,3) {$-\otimes_\cC -$};
\node[draw,rectangle] (circ) at (2,4) {$-\circ_\cC -$};
\node (top) at (2,5) {$\cC(ab\to ab\cF(v))$};
\draw (v) to[in=-90,out=90] (eta-v);
\draw (eta-v) to[in=-90,out=90] (otimes-1.-135);
\draw (j-b-1) to[in=-90,out=90] (otimes-1.-45);
\draw (j-a) to[in=-90,out=90] (otimes-3.-135);
\draw (otimes-1) to[in=-90,out=90] (otimes-3.-45);
\draw (otimes-3) to[in=-90,out=90] (circ.-135);
\draw (e) to[in=-90,out=90] (otimes-2.-135);
\draw (j-b-2) to[in=-90,out=90] (otimes-2.-45);
\draw (otimes-2) to[in=-90,out=90] (circ.-45);
\draw (circ) to[in=-90,out=90] (top);
\end{tikzpicture}
$$
which is exactly the mate of $(\id_a e_{b, \cF(v)})\circ (e_{a,\cF(v)}\id_b)$. 

Finally, the fact that $e_{c,\cF(v)}$ is invertible follows formally from naturality, the hexagon relation, and that $e_{1_\cC, \cF(v)}=\id_{\cF(v)}$ together with rigidity in the usual way.
We have
$$
\begin{tikzpicture}[baseline=30,smallstring]
\node (Fv-bottom) at (1,0) {$\cF(v)$};
\node (c-bottom) at (0,0) {$c$};
\node[draw,rectangle] (e-c) at (.5,1) {$e_{c,\cF(v)}$};
\node[draw,rectangle] (e-c*) at (-.5,2) {$e_{c_*,\cF(v)}$};
\node (Fv-top) at (-.75,3) {$\cF(v)$};
\node (c-top) at (-1.5,3) {$c$};
\draw (c-bottom) to[in=-90,out=90] (e-c.-135);
\draw (Fv-bottom) to[in=-90,out=90] (e-c.-45);
\draw (e-c.45) -- ($ (e-c.45) + (0,1) $) to[in=90,out=90] (e-c*.45);
\draw (e-c*.-135) to[in=-90,out=-90] ($ (c-top) - (0,1.3) $) -- (c-top);
\draw (e-c.135) to[in=-90,out=90] (e-c*.-45);
\draw (e-c*.129) to[in=-90,out=90] (Fv-top);
\end{tikzpicture}
\,\,=\,\,
\begin{tikzpicture}[baseline=25,smallstring]
\node (Fv-bottom) at (.75,0) {$\cF(v)$};
\node (c-bottom) at (0,0) {$c$};
\node[draw,rectangle] (e) at (0,1) {$\quad e_{c_* c,\cF(v)}\quad$};
\node (Fv-top) at (-.5,2) {$\cF(v)$};
\node (c-top) at (-1.5,2) {$c$};
\draw (c-bottom) to[in=-90,out=90] (e);
\draw (Fv-bottom) to[in=-90,out=90] (e.-23);
\draw (e.-150) to[in=-90,out=-90] ($ (c-top) - (0,1.3) $) -- (c-top);
\draw (e.148) to[in=-90,out=90] (Fv-top);
\draw (e) -- ($ (e) + (0,.5) $) to[in=90,out=90] ($ (e.23) + (0,.2) $) -- (e.23);
\end{tikzpicture}
\,\,=
\id_{c\cF(v)}
$$
and similarly for the composite the other way.
\end{proof}


Thus $\cF:\cV\to \cC^\cV$ lifts to an oplax monoidal functor $\cF^{\scriptscriptstyle Z}: \cV\to Z(\cC^\cV)$. 
With these half-braidings in hand, we state the following proposition, whose proof is omitted as it is similar to that of Proposition \ref{prop:MateOfComposition}.

\begin{prop}
\label{prop:MateOfTensorProduct}
The mate of the tensor product map $(-\otimes_\cC -)\in \cV(\cC(a\to b) \cC(c\to d) \to \cC(ac\to bd))$ under Adjunction \eqref{eq:La-Adjunction}
with $v= \cC(a\to b)\cC(c\to d)$ and $a=ac$ and $b=bd$ is given by
$$
\begin{tikzpicture}[baseline=50,smallstring]
\node (a) at (0,0) {$a$};
\node (c) at (1,0) {$c$};
\node (F) at (3,0) {$\{a\to b;c\to d\}$};
\node[draw,rectangle] (m) at (3,1) {$\mu_{\{a\to b\}, \{c\to d\}} $};
\draw[double] (F) to[in=-90,out=90] (m.270);
\node[draw,rectangle] (ev-a-b) at (1,3) {$\varepsilon_{a\to b}$};
\node[draw,rectangle] (ev-c-d) at (3,3) {$\varepsilon_{c\to d}$};
\draw[knot] (m.135) to[in=-90,out=90] (ev-a-b.-45);
\draw[knot] (c) to[in=-90,out=90] (ev-c-d.-135);
\draw (m.45) to[in=-90,out=90] (ev-c-d.-45);
\draw (a) to[in=-90,out=90] (ev-a-b.-135);
\node (b) at (1,4) {$b$};
\node (d) at (3,4) {$d$};
\draw (ev-a-b) to[in=-90,out=90] (b);
\draw (ev-c-d) to[in=-90,out=90] (d);
\end{tikzpicture}
.
$$
\end{prop}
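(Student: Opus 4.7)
The plan is to mirror the proof of Proposition \ref{prop:MateOfComposition}, with one extra ingredient: the half-braiding $e_{c,\{a\to b\}}$ must appear in order to move the object $c$ past $\{a\to b\}$ after $\mu$ has split $\{a\to b;c\to d\}$ into $\{a\to b\}\{c\to d\}$. Rather than attempting to simplify $\cF(-\otimes_\cC-)\circ \varepsilon_{ac\to bd}$ directly, I will take the morphism $M$ appearing on the right-hand side of the statement, compute its mate under Adjunction \eqref{eq:La-Adjunction}, and show that it equals $-\otimes_\cC-$.

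First I would compute $\mate(M)$ using Remark \ref{rem:Mates} together with Lemma \ref{lem:MateOfId - aFv}: the mate of $\id_{ac\cF(uv)}$ (with $u=\cC(a\to b),\, v=\cC(c\to d)$) is $(j_{ac}\,\eta_{uv})\circ(-\otimes_\cC-)$, and post-composition by $M$ yields $\mate(M)$. Next, I would break $M$ into its three factors and replace each by a known formula in $\cV$: by Lemma \ref{lem:Oplaxitor}, the oplaxitor $\mu_{u,v}$ has mate $(\eta_u\eta_v)\circ(-\otimes_\cC-)$; by the definition in \eqref{eq:MateOfHalfBraiding}, the half-braiding $e_{c,\cF(u)}$ has mate $(\eta_u\,j_c)\circ(-\otimes_\cC-)$; and by Corollary \ref{cor:Simplify-j-eta-ev}, post-composing with $\varepsilon_{a\to b}$ (resp.\ $\varepsilon_{c\to d}$) absorbs a $j_a\eta_u$ (resp.\ $j_c\eta_v$) back into the identity on $\cC(a\to b)$ (resp.\ $\cC(c\to d)$).

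Plugging all of these into the expression for $\mate(M)$ produces a single large string diagram in $\cV$ with several $\eta$'s, $j$'s, and $\otimes_\cC$/$\circ_\cC$ boxes. The key simplification is then to apply the braided interchange relation \eqref{eq:BraidedInterchange} to swap the order in which $\mu$ acts versus how the two factors $u$ and $v$ are fed into the composition with $\varepsilon_{a\to b}\varepsilon_{c\to d}$; this is precisely where the braiding between $c$ and $\{a\to b\}$ in $\cC$ becomes the braiding between $u$ and $j_c$ in $\cV$. After collapsing the resulting $\eta$/$\varepsilon$ pairs via Corollary \ref{cor:Simplify-j-eta-ev}, one is left with a single $(-\otimes_\cC-)$ box applied to $u\otimes v$, as desired.

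Equivalently, one could first prove a tensor-product analogue of Lemma \ref{lem:AdjunctionCompatibility}: given $f\in\cC^\cV(a\cF(u)\to b)$ and $g\in\cC^\cV(c\cF(v)\to d)$ with mates $\tilde f, \tilde g$, the mate of $(\tilde f\tilde g)\circ(-\otimes_\cC-)$ is $(\id_{ac}\mu_{u,v})\circ(\id_a e_{c,\cF(u)}\id_{\cF(v)})\circ(f\,g)$; then specialize to $f=\varepsilon_{a\to b}$, $g=\varepsilon_{c\to d}$, whose mates are $\id_{\cC(a\to b)}$ and $\id_{\cC(c\to d)}$. The main obstacle in either route is not conceptual but diagrammatic bookkeeping: one must carefully track which $\eta$'s are inserted where, and apply the braided interchange at the right place so that the half-braiding in $\cC^\cV$ manifests as the $\cV$-braiding between $\cC(a\to b)$ and $\cC(c\to d)$ that is hidden inside the definition of $-\otimes_\cC-$ on a four-strand tensor.
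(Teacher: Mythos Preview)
Your approach is correct and is precisely what the paper intends: the proof is explicitly omitted there with the remark that it ``is similar to that of Proposition \ref{prop:MateOfComposition}'', and your plan---compute the mate of the displayed morphism via Corollary \ref{cor:MateUsingTrace}, expand $j_{ac}$ and the half-braiding using \eqref{eq:MateOfHalfBraiding}, then apply braided interchange and Corollary \ref{cor:Simplify-j-eta-ev} to collapse back to $-\otimes_\cC-$---is exactly that analogue, with the half-braiding as the one new ingredient. Your alternative via a tensor-product version of Lemma \ref{lem:AdjunctionCompatibility} is also valid and arguably cleaner; one small caution is your final sentence: there is no braiding ``hidden inside'' $-\otimes_\cC-$ itself, rather the $\cV$-braiding produced by braided interchange is absorbed by the mate of $e_{c,\cF(u)}$, which is why the end result is the unbraided map $-\otimes_\cC-$.
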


\begin{prop}
\label{prop:Braided}
The functor $\cF^{\scriptscriptstyle Z}$ is braided, i.e., $\cF(\beta_{u,v})\circ \mu_{v,u}=\mu_{u,v}\circ e_{\cF(u),\cF(v)}$ for all $u,v\in \cV$.
\end{prop}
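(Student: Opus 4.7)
The plan is to take mates under Adjunction \eqref{eq:TraceAdjunction}, so that the proposition reduces to an equality of morphisms $uv \to \cC(1_\cC \to \cF(v)\cF(u))$ in $\cV$. Using the standard formula $\mate(h) = \eta_w \circ \Tr_\cV(h)$ for $h \in \cC^\cV(\cF(w) \to c)$, the second half of Remark \ref{rem:Mates} for splitting $\mate$ of a composite, the naturality of the unit $\eta$ applied to $\beta_{u,v}:uv\to vu$ (which gives $\eta_{uv}\circ\Tr_\cV(\cF(\beta_{u,v})) = \beta_{u,v}\circ\eta_{vu}$), and the explicit description of $\mate(\mu_{u,v})$ supplied by Lemma \ref{lem:Oplaxitor}, I would compute
\[
\mate(\cF(\beta_{u,v}) \circ \mu_{v,u}) = \beta_{u,v} \circ (\eta_v \eta_u) \circ (-\otimes_\cC -)_{v,u}
\]
and
\[
\mate(\mu_{u,v} \circ e_{\cF(u),\cF(v)}) = (\eta_u \eta_v) \circ (-\otimes_\cC -)_{u,v} \circ \Tr_\cV(e_{\cF(u),\cF(v)}).
\]
Naturality of the braiding in $\cV$ applied to $\eta_u$ and $\eta_v$ rewrites the first of these as $(\eta_u \eta_v) \circ \beta \circ (-\otimes_\cC -)_{v,u}$, so the problem becomes an equation between two morphisms that start with the common prefix $(\eta_u \eta_v)$.

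The remaining task is to verify this equation. The key input is the defining property of $e_{\cF(u),\cF(v)}$: under Adjunction \eqref{eq:La-Adjunction} with $a = \cF(u)$, its mate is $(\eta_v j_{\cF(u)}) \circ (-\otimes_\cC -)$, while Corollary \ref{cor:MateUsingTrace} provides an alternative expression for this same mate built out of $e_{\cF(u),\cF(v)}$, $j_{\cF(u)}$, $\eta_v$, $-\otimes_\cC-$, and $-\circ_\cC-$. Equating these two expressions yields a rewrite rule expressing $e_{\cF(u),\cF(v)}$ in terms of structural data. Expanding $\Tr_\cV(e_{\cF(u),\cF(v)})$ on the right-hand side via the formula $\Tr_\cV(f)(g) = g \circ f$, and then applying the braided interchange axiom in $\cV$ (which swaps compositions of $-\otimes_\cC-$ and $-\circ_\cC-$ at the cost of an inserted braiding) together with the unitality axioms for $j$, converts the right-hand side into the left-hand side.

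The main obstacle is this final bookkeeping step. The underlying algebraic content is the pointwise identity $(g \otimes_\cC h) \circ e_{\cF(u),\cF(v)} = h \otimes_\cC g$ for $g \in \cC^\cV(1_\cC \to \cF(u))$ and $h \in \cC^\cV(1_\cC \to \cF(v))$, which follows immediately from the naturality of $e$ established in Lemma \ref{lem:HalfBraidings} (applied with $f = g$) together with $e_{1_\cC,\cF(v)} = \id_{\cF(v)}$ and the interchange in $\cC^\cV$. Lifting this \emph{elementwise} calculation to an equality of honest morphisms in $\cV$, rather than merely a statement about generalized $1_\cV$-graded elements, is precisely what the braided interchange relation in $\cV$ enables.
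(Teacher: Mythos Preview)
Your approach is correct and follows the same overall strategy as the paper: take mates under Adjunction~\eqref{eq:TraceAdjunction}, compute the mate of the left-hand side as $\beta_{u,v}\circ(\eta_v\eta_u)\circ(-\otimes_\cC-)$, and then use the braided interchange relation together with the defining property of $e_{\cF(u),\cF(v)}$ to match it with the mate of the right-hand side.

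The only substantive difference is in how the mate of the right-hand side is handled. You compute it directly as $(\eta_u\eta_v)\circ(-\otimes_\cC-)\circ\Tr_\cV(e_{\cF(u),\cF(v)})$ and then face the ``bookkeeping step'' you flag as the main obstacle. The paper sidesteps this by invoking Lemma~\ref{lem:AdjunctionCompatibility} with $\tilde f=\eta_u$ and $\tilde g=(\eta_v j_{\cF(u)})\circ(-\otimes_\cC-)$: this lemma packages exactly the computation you describe, and outputs that the mate of $\mu_{u,v}\circ e_{\cF(u),\cF(v)}$ equals $(\eta_u\,\tilde e)\circ(-\circ_\cC-)$. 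The paper then manipulates the mate of the left-hand side (inserting $j_{1_\cC}$ and $j_{\cF(u)}$, applying interchange once) until it visibly has this form. So your route and the paper's route differ only in whether the interchange manipulation is done on the left-hand side (paper) or the right-hand side (you), and whether Lemma~\ref{lem:AdjunctionCompatibility} is cited or unpacked by hand. Using that lemma eliminates precisely the obstacle you identify.
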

\begin{proof}
We prove these maps are equal by taking mates under Adjunction \eqref{eq:TraceAdjunction}.
By Remark \ref{rem:Mates}, the mate of $\cF(\beta_{u,v})\circ \mu_{v,u}$ is equal to $\beta_{u,v}\circ  (\eta_v\eta_u)\circ (-\otimes_\cC-)$.
We may compose with identity elements at no extra cost and then use the interchange relation together with naturality of the braiding in $\cV$ to obtain
$$
\mate(\cF(\beta_{u,v})\circ \mu_{v,u})
=\,\,
\begin{tikzpicture}[baseline=40,smallstring]
\node (1-L) at (0,0) {$\iV$};
\node (u) at (1,0) {$u$};
\node (v) at (2,0) {$v$};
\node (1-R) at (3,0) {$\iV$};
\node[draw, rectangle, fill=white] (j-1) at (0,1) {$j_{1_\cC}$};
\node[draw, rectangle, fill=white] (eta-v) at (1,1) {$\eta_v$};
\node[draw, rectangle, fill=white] (eta-u) at (2,1) {$\eta_u$};
\node[draw, rectangle, fill=white] (j-u) at (3,1) {$j_{\cF(u)}$};
\node[draw, rectangle, fill=white] (circ-L) at (.5,2) {$-\circ_\cC - $};
\node[draw, rectangle, fill=white] (circ-R) at (2.5,2) {$-\circ_\cC - $};
\node[draw, rectangle, fill=white] (otimes) at (1.5,3) {$-\otimes_\cC - $};
\draw (j-1) to [in= -90,out=90] (circ-L.-135);
\draw (j-u) to [in= -90,out=90] (circ-R.-45);
\draw[knot] (v) to [in= -90,out=90] (eta-v);
\draw[knot] (u) to [in= -90,out=90] (eta-u);
\node (1-vu) at (1.5,4) {$\cC(1\to \cF(v)\cF(u))$};
\draw (eta-v) to [in= -90,out=90] (circ-L.-45);
\draw (eta-u) to [in= -90,out=90] (circ-R.-135);
\draw (circ-L) to [in= -90,out=90] (otimes.-135);
\draw (circ-R) to [in= -90,out=90] (otimes.-45);
\draw (otimes) to [in= -90,out=90] (1-vu);
\end{tikzpicture}
\,\,=\,\,
\begin{tikzpicture}[baseline=40,smallstring]
\node (1-L) at (0,0) {$\iV$};
\node (u) at (1,0) {$u$}; 
\node (v) at (2,0) {$v$};
\node (1-R) at (3,0) {$\iV$};
\node[draw, rectangle, fill=white] (j-1) at (0,1) {$j_{1_\cC}$};
\node[draw, rectangle, fill=white] (eta-u) at (1,1) {$\eta_u$};
\node[draw, rectangle, fill=white] (eta-v) at (2,1) {$\eta_v$};
\node[draw, rectangle, fill=white] (j-u) at (3,1) {$j_{\cF(u)}$};
\node[draw, rectangle, fill=white] (otimes-L) at (.5,2) {$-\otimes_\cC - $};
\node[draw, rectangle, fill=white] (otimes-R) at (2.5,2) {$-\otimes_\cC - $};
\node[draw, rectangle, fill=white] (circ) at (1.5,3) {$-\circ_\cC - $};
\node (1-vu) at (1.5,4) {$\cC(1\to \cF(v)\cF(u))$};
\draw (v) to [in= -90,out=90] (eta-v);
\draw (u) to [in= -90,out=90] (eta-u);
\draw (j-1) to [in= -90,out=90] (otimes-L.-135);
\draw (eta-u) to [in= -90,out=90] (otimes-L.-45);
\draw (eta-v) to [in= -90,out=90] (otimes-R.-135);
\draw (j-u) to [in= -90,out=90] (otimes-R.-45);
\draw (otimes-L) to [in= -90,out=90] (circ.-135);
\draw (otimes-R) to [in= -90,out=90] (circ.-45);
\draw (circ) to [in= -90,out=90] (1-vu);
\end{tikzpicture}
\,\,=
(\eta_u \tilde{e})\circ (-\circ_\cC-)
$$
where $\tilde{e}$ is the mate of $e_{\cF(u),\cF(v)}$ as in \eqref{eq:MateOfHalfBraiding} under Adjunction \eqref{eq:La-Adjunction}.
Now applying Lemma \ref{lem:AdjunctionCompatibility} with $\tilde{f}=\eta_u$ and $\tilde{g}=\tilde{e}$, we see that the above map is none other than the mate of $\mu_{u,v} \circ e_{\cF(u),\cF(v)}$.
\end{proof}

Combining Sections \ref{sec:UnderlyingTensorCategory}, \ref{sec:Mates}, and \ref{sec:extracting-oplax}, starting with a rigid $\cV$-monoidal category $\cC$ such that $\Tr_\cV = \cC(1_\cC \to -)$ admits a left adjoint $\cF$, we get an ordinary rigid monoidal category $\cC^\cV$ and a braided oplax monoidal functor $\cF^{\scriptscriptstyle Z}: \cV\to Z(\cC^\cV)$ such that $\cF = \cF^{\scriptscriptstyle Z}\circ R$ admits a right adjoint.

\section{The enriched monoidal category from a braided oplax monoidal functor}
\label{sec:construction}

We now consider the other direction. 
Given a rigid monoidal category $\cT$ and a braided oplax monoidal functor $\cF^{\scriptscriptstyle Z}: \cV \to Z(\cT)$ such that $\cF:= \cF^{\scriptscriptstyle Z}\circ R: \cV\to \cT$ admits a right adjoint, we can construct a $\cV$-monoidal category, which we call $\cT \dslash \cF$. 

\begin{remark}
\label{rem:CategorifiedTrace}
Suppose $\cF^{\scriptscriptstyle Z}$ is strong monoidal, and $\cF=\cF^{\scriptscriptstyle Z}\circ R$ has adjoint $\Tr_\cV: \cT\to \cV$, where $R: Z(\cT)\to \cT$ is the forgetful functor.
In this scenario, we may construct $\cT\dslash \cF$ using the graphical calculus of \cite{1509.02937}.
One sets $\cT \dslash \cF(a\to b) = \Tr_\cV(a^*b)$, and composition and tensor product are given by
\begin{align*}
(-\circ -) &= \mu_{a^*b, b^*c} \circ \Tr_\cV(\id_{a^*} \ev_b \id_c)
\\
(-\otimes -) &= (\id_{a^*}\id_b \tau^{-1}_{c^*,d})\circ\mu_{a^*b, dc^*}\circ\tau_{a^*bd,c^*}.
\end{align*}
Here, $\mu_{a,b}: \Tr_\cV(a) \Tr_\cV(b) \to \Tr_\cV(a b)$ is the laxitor of $\Tr_\cV$ discussed in \eqref{eq:laxitor}, and $\tau_{a,b}: \Tr_\cV(a b) \to \Tr_\cV(b^{**} a)$ is the traciator discussed in Remark \ref{rem:Traciator}.
That these maps satisfy the braided interchange relation is a challenging exercise with the graphical calculus developed in \cite{1509.02937} using only the relations therein, but it becomes an easy calculation using the anchored planar algebra technology developed in \cite{1607.06041}.
We note that pivotality is not required as every traciator is paired with an inverse traciator.
\end{remark}

\subsection{The \texorpdfstring{$\cV$}{V}-monoidal category \texorpdfstring{$\cT \dslash \cF$}{T // F}}
\label{sec:EnrichingT}

We now construct $\cT \dslash \cF$ as a $\cV$-monoidal category only assuming $\cF^{\scriptscriptstyle Z}$ is braided oplax monoidal. 
The category $\cT \dslash \cF$ has the same objects as $\cT$.
To define the hom objects, we first note that similar to Adjunction \eqref{eq:La-Adjunction}, for all $a\in\cT$, the functor  $\cL_a: \cV\to \cT$ given by $\cL_a(v)=a\cF(v)$ has a right adjoint $\bar{\cR}_a:\cT\to \cV$. 
Indeed, let $\bar{\cR}_{1_\cT}: \cT \to \cV$ be the right adjoint of $\cF: \cV\to \cT$, and define $\bar{\cR}_a (b) = \bar{\cR}_{1_\cT}(a^*b)$.
Observe that for all $a\in \cT$, $v\in\cV$, and $b\in \cT$, we have
\begin{equation*}
\begin{split}
\cT( \cL_a(v) \to b)
&=
\cT( a\cF(v) \to b)
\\&\cong
\cT(\cF(v) \to a^*b)
\\&\cong
\cV(v \to \bar{\cR}_{1_\cT}(a^*b))
\\&=
\cV(v\to \bar{\cR}_a(b)).
\end{split}
\end{equation*}
We define $\cT \dslash \cF(a\to b):=\bar{\cR}_a(b)\in \cV$.
Thus $\cT \dslash \cF(a\to b)$ satisfies the adjunction
\begin{equation}
\label{eq:adjunction}
\cV(v \to \cT \dslash \cF(a\to b)) \cong \cT( a\cF(v) \to b).
\end{equation}

\begin{defn}
The identity element $j_a \in \cV(1_\cV \to \cT\dslash \cF (a\to a))$ is the mate of $\id_a \in \cT(a\to a)$.
\end{defn}

We introduce the following notation to make our diagrams easier to read:
\begin{align*}
[a \to b] &\overset{\text{def}}{=} \cF(\cT \dslash \cF(a \to b)).
\\
[a \to b;c\to d;\cdots ] &\overset{\text{def}}{=} \cF(\cT \dslash \cF(a \to b)\cT \dslash \cF(c \to d) \cdots).
\end{align*}

\begin{defn}
The evaluation morphism $\bar{\varepsilon}_{a \to b}: a [a \to b] \to b$ in $\cT$ is
the mate of the identity $\cV(\cT \dslash \cF(a \to b)\to \cT \dslash \cF(a \to b))$ under \eqref{eq:adjunction}.
\end{defn}


\subsection{Composition}

%
%
\begin{defn}
Following Proposition \ref{prop:MateOfComposition}, we define the composition map
$$
(-\circ_{\cT \dslash \cF} -):\cT \dslash \cF(a \to b) \cT \dslash \cF(b \to c) \to \cT \dslash \cF(a \to c)
$$
as the mate of the following map in $\cT(a[a\to b][b\to c]\to c)$ under the adjunction \eqref{eq:adjunction}:
$$
\begin{tikzpicture}[baseline=50,smallstring]
\node (a) at (0,0) {$a$};
\node (F) at (2,0) {$[a\to b; b\to c]$};
\node[draw,rectangle] (m) at (2,1) {$\mu_{[a\to b], [b\to c]} $};
\draw[double] (F) to[in=-90,out=90] (m.270);
\node[draw,rectangle] (ev-a-b) at (1,2) {$\bar{\varepsilon}_{a\to b}$};
\draw (a) to[in=-90,out=90] (ev-a-b.-135);
\draw (m.135) to[in=-90,out=90] (ev-a-b.-45);
\node[draw,rectangle] (ev-b-c) at (2,3) {$\bar{\varepsilon}_{b\to c}$};
\draw (m.45) to[in=-90,out=90] (ev-b-c.-45);
\draw (ev-a-b) to[in=-90,out=90] (ev-b-c.-135);
\node (c) at (2,4) {$c$};
\draw (ev-b-c) to[in=-90,out=90] (c);
\end{tikzpicture}
.
$$
\end{defn}

\begin{lem}
\label{lem:CompositionFormula}
Composition is compatible with evaluation, in the sense that the following diagram commutes:
$$
\begin{tikzcd}[column sep=huge]
a [a\to b; b\to c] \ar[r, "\id \cF(-\circ_{\cT \dslash \cF}-)"] \ar[dd, "\id \mu"] &
a [a\to c]  \ar[dr, "\bar{\varepsilon}_{a\to c}"]
\\
&&c
\\
a [a\to b][b\to c] \ar[r, "\bar{\varepsilon}_{a\to b}\id"] &
b[b\to c] \ar[ur, "\bar{\varepsilon}_{b\to c}"]
\end{tikzcd}
$$
\end{lem}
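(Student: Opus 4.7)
The plan is to observe that this lemma is essentially a direct unpacking of the definition of $-\circ_{\cT \dslash \cF}-$ as a mate under the adjunction \eqref{eq:adjunction}, combined with the general mate principle from Remark \ref{rem:Mates}.

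First I would recall that adjunction \eqref{eq:adjunction} gives, for any $\tilde{f}\in \cV(v \to \cT\dslash\cF(a\to c))$, that its mate in $\cT(a\cF(v)\to c)$ is obtained by the formula from Remark \ref{rem:Mates}: namely, it equals $\cL_a(\tilde{f})$ postcomposed with the mate of $\id_{\cT\dslash\cF(a\to c)}$. Since $\cL_a(\tilde{f}) = \id_a \otimes \cF(\tilde{f})$ and the mate of the identity is precisely the counit $\bar{\varepsilon}_{a\to c}$, this reads
\[
\mate(\tilde{f}) = (\id_a \otimes \cF(\tilde{f}))\circ \bar{\varepsilon}_{a\to c}.
\]

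Next I would specialize to $v = \cT\dslash\cF(a\to b)\cT\dslash\cF(b\to c)$ and $\tilde{f} = -\circ_{\cT\dslash\cF}-$. Applying the above formula gives that the mate of $-\circ_{\cT\dslash\cF}-$ is
\[
(\id_a \otimes \cF(-\circ_{\cT\dslash\cF}-))\circ \bar{\varepsilon}_{a\to c},
\]
which is precisely the upper path $a[a\to b; b\to c] \to c$ in the lemma's diagram.

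On the other hand, by the very definition of $-\circ_{\cT\dslash\cF}-$ given just before the lemma, its mate is the composite displayed in that definition, namely the map
\[
a[a\to b;b\to c] \xrightarrow{\id_a\otimes \mu_{[a\to b],[b\to c]}} a[a\to b][b\to c] \xrightarrow{\bar{\varepsilon}_{a\to b}\otimes\id} b[b\to c] \xrightarrow{\bar{\varepsilon}_{b\to c}} c,
\]
which is exactly the lower path of the diagram. Equating the two expressions for the mate gives the desired commutativity. There is no real obstacle here: the lemma is essentially a tautology unpacking the definition, once Remark \ref{rem:Mates} is available; the only thing to be careful about is matching the oplaxitor $\mu$ in the definition of composition with the $\mu$ appearing in the lower-left leg of the diagram.
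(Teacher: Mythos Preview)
Your proposal is correct and follows essentially the same approach as the paper: the paper's proof simply states that the result follows from Adjunction~\eqref{eq:adjunction} together with Remark~\ref{rem:Mates}, taking $\cL=\cL_a$, $\cR=\bar{\cR}_a$, $v=\cT\dslash\cF(a\to b)\cT\dslash\cF(b\to c)$, and $f=(-\circ_{\cT\dslash\cF}-)$, which is exactly what you have spelled out in detail.
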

\begin{proof}
This follows from \eqref{eq:adjunction} and Remark \ref{rem:Mates} by taking $\cA=\cT$, $\cB=\cV$, $\cL=\cL_a$ and  $\cR=\bar{\cR}_a,$ $v=\cT \dslash \cF(a \to b) \cT \dslash \cF(b \to c)$, and $f=(-\circ_{\cT \dslash \cF} -): v\to \bar{\cR}_a(c)=\cT \dslash \cF(a\to c)$.
\end{proof}

\begin{lem}
\label{lem:CompositionAssociative}
Composition is associative.
\end{lem}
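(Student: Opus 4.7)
The plan is to prove associativity by showing that both composites, viewed as morphisms in $\cV$ from $\cT\dslash\cF(a\to b)\cT\dslash\cF(b\to c)\cT\dslash\cF(c\to d)$ to $\cT\dslash\cF(a\to d)$, have equal mates under the adjunction \eqref{eq:adjunction} with $v = \cT\dslash\cF(a\to b)\cT\dslash\cF(b\to c)\cT\dslash\cF(c\to d)$ and source $a$.  Writing $\alpha = (-\circ_{\cT \dslash \cF} -)\otimes\id$ followed by $(-\circ_{\cT \dslash \cF} -)$ and $\beta = \id\otimes (-\circ_{\cT \dslash \cF}-)$ followed by $(-\circ_{\cT \dslash \cF}-)$, it suffices to check $\mate(\alpha) = \mate(\beta)$ in $\cT(a[a\to b;b\to c;c\to d]\to d)$.

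First, I would compute $\mate(\alpha)$.  Using Remark \ref{rem:Mates} to push composition past mates, together with Lemma \ref{lem:CompositionFormula} applied to the outer composition (joining a morphism in $\cT\dslash\cF(a\to c)$ with one in $\cT\dslash\cF(c\to d)$), and then applied again to the inner composition (joining $\cT\dslash\cF(a\to b)$ with $\cT\dslash\cF(b\to c)$), I expect to unfold $\mate(\alpha)$ as the composite
\[
a[a\to b;b\to c;c\to d]
\xrightarrow{\id_a\,\mu}
a[a\to b;b\to c][c\to d]
\xrightarrow{\id_a\,\mu\,\id}
a[a\to b][b\to c][c\to d]
\]
followed by the three evaluations $\bar\varepsilon_{a\to b}$, $\bar\varepsilon_{b\to c}$, $\bar\varepsilon_{c\to d}$ applied in order.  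An entirely parallel computation for $\beta$ yields the composite that first applies $\mu$ to split off $[a\to b]$ and then $\mu$ to split $[b\to c;c\to d]$, followed by the same three $\bar\varepsilon$'s.

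The two resulting morphisms in $\cT$ differ only in the order of the two applications of $\mu$; but these agree by coassociativity of the oplaxitor $\mu$ (which holds by Lemma \ref{lem:Oplaxitor}, i.e.\ because $\cF$ is oplax monoidal, so $\cF$ applied to the associator of $\cV$ intertwines the two ways of iterating $\mu$).  The remaining tail of evaluations is identical, and associativity of composition in $\cT$ lets us execute the $\bar\varepsilon$'s in any consistent order.  Hence $\mate(\alpha) = \mate(\beta)$ and the adjunction gives $\alpha = \beta$.

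The main obstacle is the bookkeeping of the mate computation in Step 2: one must repeatedly interchange the formation of mates with composition via Remark \ref{rem:Mates}, and each use of Lemma \ref{lem:CompositionFormula} introduces a fresh copy of $\mu$ and a fresh $\bar\varepsilon$, acting on the correct tensor factor.  Once both expansions are written out in the same normal form, the equality is clear; everything reduces to coassociativity of $\mu$ and associativity of $\circ_\cT$.
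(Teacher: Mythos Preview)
Your proposal is correct and follows essentially the same route as the paper: both arguments compute the mates of the two composites under the adjunction \eqref{eq:adjunction}, unfold them via Remark~\ref{rem:Mates} and Lemma~\ref{lem:CompositionFormula} (together with naturality of $\mu$) into a string of three evaluations $\bar\varepsilon_{a\to b},\bar\varepsilon_{b\to c},\bar\varepsilon_{c\to d}$ preceded by two iterated $\mu$'s, and then invoke coassociativity of the oplaxitor to match the two nestings. One minor remark: in this section $\cF^{\scriptscriptstyle Z}$ is \emph{given} as oplax monoidal, so coassociativity of $\mu$ is part of the hypothesis rather than something supplied by Lemma~\ref{lem:Oplaxitor}; your parenthetical ``i.e.\ because $\cF$ is oplax monoidal'' is the correct justification.
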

\begin{proof}
By Remark \ref{rem:Mates} and Lemma \ref{lem:CompositionFormula}, the mate of 
$$
\left(\id(-\circ_{\cT \dslash \cF} -)\right) \circ \left(-\circ_{\cT \dslash \cF} -\right) : \cT \dslash \cF(a \to b) \cT \dslash \cF(b \to c)\cT \dslash \cF(c \to d)\to \cT \dslash \cF(a \to d)
$$ 
is given by
$$
\begin{tikzpicture}[baseline=30,smallstring]
\node (a) at (-.5,-1) {$a$};
\node (F) at (2,-1) {$[a\to b; b\to c;c\to d]$};
\node[draw,rectangle, fill=white] (C) at (2,0) {$\id\cF(-\circ_{\cT \dslash \cF}-)$};
\draw[triple={[line width=.3mm,black] in
      [line width=.9mm,white] in
      [line width=1.5mm,black]}] (F) to[in=-90,out=90] (C.270);
\node[draw,rectangle] (m) at (2,1) {$\mu_{[a\to b], [b\to d]} $};
\draw[double] (C) to[in=-90,out=90] (m.270);
\node[draw,rectangle] (ev-a-b) at (1,2) {$\bar{\varepsilon}_{a\to b}$};
\draw (a) to[in=-90,out=90] (ev-a-b.-135);
\draw (m.135) to[in=-90,out=90] (ev-a-b.-45);
\node[draw,rectangle] (ev-b-d) at (2,3) {$\bar{\varepsilon}_{b\to d}$};
\draw (m.45) to[in=-90,out=90] (ev-b-d.-45);
\draw (ev-a-b) to[in=-90,out=90] (ev-b-d.-135);
\node (d) at (2,4) {$d$};
\draw (ev-b-d) to[in=-90,out=90] (d);
\end{tikzpicture}
=
\begin{tikzpicture}[baseline=30,smallstring]
\node (a) at (0,-1) {$a$};
\node (F) at (3,-1) {$[a\to b; b\to c;c\to d]$};
\node[draw,rectangle] (m) at (3,0) {$\mu_{[a\to b], [b\to c;c\to d]} $};
\draw[triple={[line width=.3mm,black] in
      [line width=.9mm,white] in
      [line width=1.5mm,black]}] (F) to[in=-90,out=90] (m);
\node[draw,rectangle] (ev-a-b) at (1,1.5) {$\bar{\varepsilon}_{a\to b}$};
\node[draw,rectangle, fill=white] (C) at (3.3,1.5) {$\cF(-\circ_{\cT \dslash \cF}-)$};
\draw (a) to[in=-90,out=90] (ev-a-b.-135);
\draw (m.135) to[in=-90,out=90] (ev-a-b);
\draw[double] (m.45) to[in=-90,out=90] (C);
\node[draw,rectangle] (ev-b-d) at (3,3) {$\bar{\varepsilon}_{b\to c}$};
\draw (C) to[in=-90,out=90] (ev-b-d.-45);
\draw (ev-a-b) to[in=-90,out=90] (ev-b-d.-135);
\node (d) at (3,4) {$d$};
\draw (ev-b-d) to[in=-90,out=90] (d);
\end{tikzpicture}
$$
by naturality of $\mu$. 
Now using Lemma \ref{lem:CompositionFormula}, the right hand side above is equal to
$$
\begin{tikzpicture}[baseline=50,smallstring]
\node (a) at (0,0) {$a$};
\node (F) at (3,0) {$[a\to b;b\to c;c\to d]$};
\node[draw,rectangle] (m1) at (3,1) {$\mu_{[a\to b],[b\to c; c\to d]}$};
\draw[triple={[line width=.3mm,black] in
      [line width=.9mm,white] in
      [line width=1.5mm,black]}] (F) to[in=-90,out=90] (m1.270);
\node[draw,rectangle] (ev-a-b) at (1,2.5) {$\bar{\varepsilon}_{a\to b}$};
\draw (a) to[in=-90,out=90] (ev-a-b.-135);
\node[draw,rectangle] (m2) at (3,2.5) {$\mu_{[b\to c],[c\to d]}$};
\draw (m1.135) to[in=-90,out=90] (ev-a-b.-45);
\draw[double] (m1) to[in=-90,out=90] (m2.270);
\node[draw,rectangle] (ev-b-c) at (2,3.5) {$\bar{\varepsilon}_{b\to c}$};
\draw (m2.135) to[in=-90,out=90] (ev-b-c.-45);
\draw (ev-a-b) to[in=-90,out=90] (ev-b-c.-135);
\node[draw,rectangle] (ev-c-d) at (3,4.5) {$\bar{\varepsilon}_{c\to d}$};
\draw (m2) to[in=-90,out=90] (ev-c-d);
\draw (ev-b-c) to[in=-90,out=90] (ev-c-d.-135);
\node (d) at (3,5.5) {$d$};
\draw (ev-c-d) to[in=-90,out=90] (d);
\end{tikzpicture}
=
\begin{tikzpicture}[baseline=50,smallstring]
\node (a) at (0,0) {$a$};
\node (F) at (3,0) {$[a\to b;b\to c;c\to d]$};
\node[draw,rectangle] (m1) at (3,1) {$\mu_{[a\to b;b\to c],[c\to d]}$};
\draw[triple={[line width=.3mm,black] in
      [line width=.9mm,white] in
      [line width=1.5mm,black]}] (F) to[in=-90,out=90] (m1.270);
\node[draw,rectangle] (ev-a-b) at (1,3) {$\bar{\varepsilon}_{a\to b}$};
\draw (a) to[in=-90,out=90] (ev-a-b.-135);
\node[draw,rectangle] (m2) at (2,2) {$\mu_{[a\to b],[b\to c]}$};
\draw (m2.135) to[in=-90,out=90] (ev-a-b.-45);
\draw[double] (m1.135) to[in=-90,out=90] (m2.270);
\node[draw,rectangle] (ev-b-c) at (2,4) {$\bar{\varepsilon}_{b\to c}$};
\draw (m2.45) to[in=-90,out=90] (ev-b-c.-45);
\draw (ev-a-b) to[in=-90,out=90] (ev-b-c.-135);
\node[draw,rectangle] (ev-c-d) at (3,5) {$\bar{\varepsilon}_{c\to d}$};
\draw (m1.45) to[in=-90,out=90] (ev-c-d.-45);
\draw (ev-b-c) to[in=-90,out=90] (ev-c-d.-135);
\node (d) at (3,6) {$d$};
\draw (ev-c-d) to[in=-90,out=90] (d);
\end{tikzpicture}
$$
which is the mate of $\left((-\circ_{\cT \dslash \cF} -)\id \right) \circ \left(-\circ_{\cT \dslash \cF} -\right)$, again using the naturality of $\mu$.
\end{proof}

\subsection{Tensor product}
%

\begin{defn}
Following Proposition \ref{prop:MateOfTensorProduct}, we define the tensor product morphism $\cT \dslash \cF(a \to b)\cT \dslash \cF(c \to d) \to \cT \dslash \cF(ac \to bd)$ as the mate of the following map under the adjunction \eqref{eq:adjunction}:
$$
\begin{tikzpicture}[baseline=50,smallstring]
\node (a) at (0,0) {$a$};
\node (c) at (1,0) {$c$};
\node (F) at (3,0) {$[a\to b;c\to d]$};
\node[draw,rectangle] (m) at (3,1) {$\mu_{[a\to b], [c\to d]} $};
\draw[double] (F) to[in=-90,out=90] (m.270);
\node[draw,rectangle] (ev-a-b) at (1,3) {$\bar{\varepsilon}_{a\to b}$};
\node[draw,rectangle] (ev-c-d) at (3,3) {$\bar{\varepsilon}_{c\to d}$};
\draw[knot] (m.135) to[in=-90,out=90] (ev-a-b.-45);
\draw[knot] (c) to[in=-90,out=90] (ev-c-d.-135);
\draw (m.45) to[in=-90,out=90] (ev-c-d.-45);
\draw (a) to[in=-90,out=90] (ev-a-b.-135);
\node (b) at (1,4) {$b$};
\node (d) at (3,4) {$d$};
\draw (ev-a-b) to[in=-90,out=90] (b);
\draw (ev-c-d) to[in=-90,out=90] (d);
\end{tikzpicture}
.
$$
Since $\cT \dslash \cF(a \to b)$ is an object of $\cV$, the functor $\cF^{\scriptscriptstyle Z}: \cV \to Z(\cT)$ includes data to commute $[a \to b]$ past the object $c\in \cT$. 
\end{defn}

We omit the proofs of the following two lemmas, which are similar to Lemmas \ref{lem:CompositionFormula} and \ref{lem:CompositionAssociative}.

\begin{lem}
\label{lem:AlternateTensorProductFormula}
The mate of the tensor product map $(-\otimes_{\cT \dslash \cF} -):\cT\dslash \cF(a\to b) \cT\dslash\cF(c\to d) \to \cT\dslash\cF(ac\to bd)$ is also given by $(\id_{ac}\cF(-\otimes_{\cT \dslash \cF} -))\circ \bar{\varepsilon}_{ac\to bd}$.
\end{lem}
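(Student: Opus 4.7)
The proof plan is a direct application of Remark \ref{rem:Mates}, following the same template used to establish Lemma \ref{lem:CompositionFormula}.

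I would invoke Remark \ref{rem:Mates} in the setting $\cA = \cT$, $\cB = \cV$, with left adjoint $\cL = \cL_{ac}: \cV \to \cT$ given by $\cL_{ac}(v) = ac\cF(v)$, and right adjoint $\cR = \bar{\cR}_{ac}: \cT \to \cV$, which satisfies $\bar{\cR}_{ac}(bd) = \cT\dslash\cF(ac\to bd)$ by the adjunction \eqref{eq:adjunction}. Take $v := \cT\dslash\cF(a\to b)\cT\dslash\cF(c\to d)$ and let $f := (-\otimes_{\cT\dslash\cF} -): v \to \bar{\cR}_{ac}(bd)$ be the tensor product morphism.

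Remark \ref{rem:Mates} then gives
\[
\mate(f) \;=\; \cL_{ac}(f) \circ \mate(\id_{\bar{\cR}_{ac}(bd)}) \;=\; (\id_{ac}\, \cF(f)) \circ \mate(\id_{\cT\dslash\cF(ac\to bd)}).
\]
By the definition of $\bar{\varepsilon}_{ac\to bd}$ as the mate of the identity on $\cT\dslash\cF(ac\to bd)$ under \eqref{eq:adjunction}, the last factor is exactly $\bar{\varepsilon}_{ac\to bd}$. Substituting back in the definition of $f$ yields
\[
\mate(-\otimes_{\cT\dslash\cF} -) \;=\; (\id_{ac}\, \cF(-\otimes_{\cT\dslash\cF} -)) \circ \bar{\varepsilon}_{ac\to bd},
\]
which is the required expression.

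There is essentially no obstacle: the argument is identical in structure to the proof of Lemma \ref{lem:CompositionFormula} and relies only on the naturality of the adjunction \eqref{eq:adjunction} encapsulated in Remark \ref{rem:Mates}. No braiding or oplaxitor manipulation is needed here; the content of the defining formula for $-\otimes_{\cT\dslash\cF}-$ only enters when one unpacks $\cF(-\otimes_{\cT\dslash\cF}-)$ further, which the lemma statement does not require.
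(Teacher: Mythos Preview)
Your proposal is correct and follows precisely the approach the paper indicates: the paper omits the proof but explicitly states it is similar to that of Lemma \ref{lem:CompositionFormula}, which is exactly the template you invoke (Remark \ref{rem:Mates} applied to the adjunction \eqref{eq:adjunction} with $\cL=\cL_{ac}$, $\cR=\bar{\cR}_{ac}$, and $f=(-\otimes_{\cT\dslash\cF}-)$).
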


\begin{lem}
Tensor product is associative.
\end{lem}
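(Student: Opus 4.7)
The plan is to mirror the proof of Lemma \ref{lem:CompositionAssociative} almost verbatim, replacing Lemma \ref{lem:CompositionFormula} with its tensor-product analogue Lemma \ref{lem:AlternateTensorProductFormula}, and replacing the single use of associativity of $\mu$ with a combined use of (co)associativity of $\mu$ and the hexagon for the half-braidings. First I would fix objects $a,b,c,d,e,f \in \cT$ and write down the two associativity morphisms
\[
\bigl((-\otimes_{\cT \dslash \cF}-)\id\bigr) \circ (-\otimes_{\cT \dslash \cF}-), \qquad \bigl(\id(-\otimes_{\cT \dslash \cF}-)\bigr) \circ (-\otimes_{\cT \dslash \cF}-)
\]
as morphisms $\cT \dslash \cF(a\to d)\cT \dslash \cF(b\to e)\cT \dslash \cF(c\to f) \to \cT \dslash \cF(abc\to def)$ in $\cV$, and aim to show that their mates in $\cT(abc[a\to d;b\to e;c\to f]\to def)$ under the adjunction \eqref{eq:adjunction} agree.

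By Remark \ref{rem:Mates} together with Lemma \ref{lem:AlternateTensorProductFormula}, the mate of the left-associated composite equals
\[
\bigl(\id_{abc}\cF((-\otimes_{\cT \dslash \cF}-)\id)\bigr) \circ \bigl(\id_{abc}\cF(-\otimes_{\cT \dslash \cF}-)\bigr) \circ \bar{\varepsilon}_{abc\to def},
\]
and symmetrically for the right-associated composite. Using naturality of $\mu_{-,-}$, I would then re-expand the outer $\cF(-\otimes_{\cT \dslash \cF}-)$ back into its defining form (an $\id\mu$, a half-braiding $e_{c,[ab\to de]}$, and two parallel $\bar\varepsilon$'s), and then re-expand the inner $\cF((-\otimes_{\cT \dslash \cF}-)\id)$ in the same way. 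This yields a canonical diagram in $\cT$ built from two copies of $\mu$, two half-braidings, and three parallel $\bar\varepsilon$'s. The analogous manipulation applied to the other associativity morphism yields the same shape of diagram, but with the $\mu$'s and half-braidings regrouped.

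To identify the two resulting diagrams, I would invoke three ingredients: coassociativity of the oplaxitor $\mu$ (Lemma \ref{lem:Oplaxitor}) to recognise each pair of nested $\mu$-splittings as the common triple splitting $[a\to d; b\to e; c\to f] \to [a\to d][b\to e][c\to f]$; the hexagon axiom for half-braidings (Lemma \ref{lem:HalfBraidings}) to rewrite $e_{bc,[a\to d]} = (e_{b,[a\to d]}\id_c)\circ(\id_b e_{c,[a\to d]})$; and naturality of both $\mu$ and of the half-braidings $e_{-,-}$ in $Z(\cT)$ to slide splittings past crossings and vice versa. Applying these, both mates collapse to the same morphism
\[
abc[a\to d; b\to e; c\to f] \longrightarrow a[a\to d]\,b[b\to e]\,c[c\to f] \xrightarrow{\ \bar\varepsilon\,\bar\varepsilon\,\bar\varepsilon\ } def,
\]
and associativity follows.

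The main obstacle is purely combinatorial bookkeeping: keeping track of the order in which the two $\mu$'s and the two half-braidings appear on each side, and orchestrating the correct sequence of naturality and hexagon moves. No new structural ingredients beyond coassociativity of $\mu$, the hexagon for $e$, and the naturalities already established are required.
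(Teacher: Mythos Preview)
Your proposal is correct and is exactly the approach the paper has in mind: the authors omit the proof, stating it is ``similar to Lemmas \ref{lem:CompositionFormula} and \ref{lem:CompositionAssociative}'', and your outline is precisely that analogy, with the extra bookkeeping for half-braidings (hexagon plus naturality of $e$ and of $\mu$ as morphisms in $Z(\cT)$) supplying the one genuinely new ingredient. One minor slip: with the paper's left-to-right composition convention the hexagon reads $e_{bc,\cF(v)} = (\id_b e_{c,\cF(v)})\circ(e_{b,\cF(v)}\id_c)$, so double-check the order of your two factors when you write out the details.
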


\subsection{Braided interchange}
We now prove that the braided interchange law is satisfied.
As in the previous sections, this is checked by taking mates.
We use the shorthand notation of one rectangle labelled $\mu$ for composites of $\mu$'s, since the oplaxitor is associative.
After expanding the left hand side of the braided interchange law \eqref{eq:BraidedInterchange}, we get the following diagram:
$$
\begin{tikzpicture}[baseline=50,smallstring]
\node (a) at (0,0) {$a$};
\node (d) at (1,0) {$d$};
\node (F) at (5,0) {$[a\to b;d\to e;b\to c;e\to f]$};
\node[draw,rectangle] (m1) at (5,1) {$\mu_{[a\to b;d\to e], [b\to c;e\to f]} $};
\draw[quadruple={[line width=.3mm,white] in
      [line width=.9mm,black] in
      [line width=1.5mm,white] in
      [line width=2.1mm,black]}] (F) to[in=-90,out=90] (m1.270);
\node[draw,rectangle] (m2) at (3,2) {$\mu_{[a\to b], [d\to e]} $};
\draw[double] (m1.160) to[in=-90,out=90] (m2.270);
\node[draw,rectangle] (ev-a-b) at (1,3.5) {$\bar{\varepsilon}_{a\to b}$};
\node[draw,rectangle] (ev-d-e) at (3,3.5) {$\bar{\varepsilon}_{d\to e}$};
\draw[knot] (m2.135) to[in=-90,out=90] (ev-a-b.-45);
\draw[knot] (d) -- ($ (d) + (0,1.75) $) to[in=-90,out=90] (ev-d-e.-135);
\draw (a) to[in=-90,out=90] (ev-a-b.-135);
\draw (m2.45) to[in=-90,out=90] (ev-d-e.-45);
\node[draw,rectangle] (m3) at (5.5,3) {$\mu_{[b\to c], [e\to f]} $};
\draw[double] (m1.40) to[in=-90,out=90] (m3.270);
\node[draw,rectangle] (ev-b-c) at (3,5) {$\bar{\varepsilon}_{b\to c}$};
\node[draw,rectangle] (ev-e-f) at (5,5) {$\bar{\varepsilon}_{e\to f}$};
\draw (m3.135) to[in=-90,out=90] (ev-b-c.-45);
\draw[knot] (m3.45) to[in=-90,out=90] (ev-e-f.-45);
\draw[knot] (ev-d-e.90) to[in=-90,out=90] (ev-e-f.-135);
\draw (ev-a-b.90) to[in=-90,out=90] (ev-b-c.-135);
\node (c) at (3,6) {$c$};
\node (f) at (5,6) {$f$};
\draw (ev-b-c) to[in=-90,out=90] (c);
\draw (ev-e-f) to[in=-90,out=90] (f);
\end{tikzpicture}
=
\begin{tikzpicture}[baseline=50,smallstring]
\node (a) at (0,0) {$a$};
\node (d) at (1,0) {$d$};
\node (F) at (5,0) {$[a\to b;d\to e;b\to c;e\to f]$};
\node[draw,rectangle] (m1) at (5,1) {$\qquad\mu\qquad $};
\draw[quadruple={[line width=.3mm,white] in
      [line width=.9mm,black] in
      [line width=1.5mm,white] in
      [line width=2.1mm,black]}] (F) to[in=-90,out=90] (m1.270);
\node[draw,rectangle] (ev-a-b) at (1,3.5) {$\bar{\varepsilon}_{a\to b}$};
\node[draw,rectangle] (ev-d-e) at (3,3.5) {$\bar{\varepsilon}_{d\to e}$};
\draw[knot] (m1.160) to[in=-90,out=90] (ev-a-b.-45);
\draw[knot] (d) to[in=-90,out=90] (ev-d-e.-135);
\draw (a) to[in=-90,out=90] (ev-a-b.-135);
\draw (m1.145) to[in=-90,out=90] (ev-d-e.-45);
\node[draw,rectangle] (ev-b-c) at (3,5) {$\bar{\varepsilon}_{b\to c}$};
\node[draw,rectangle] (ev-e-f) at (5,5) {$\bar{\varepsilon}_{e\to f}$};
\draw (m1.120) -- ($ (m1.120) + (0,2) $) to[in=-90,out=90] (ev-b-c.-45);
\draw[knot] (m1.45)  to[in=-90,out=90] (ev-e-f.-45);
\draw[knot] (ev-d-e.90) to[in=-90,out=90] (ev-e-f.-135);
\draw (ev-a-b.90) to[in=-90,out=90] (ev-b-c.-135);
\node (c) at (3,6) {$c$};
\node (f) at (5,6) {$f$};
\draw (ev-b-c) to[in=-90,out=90] (c);
\draw (ev-e-f) to[in=-90,out=90] (f);
\end{tikzpicture}
.
$$
Now we perform isotopy to move $\bar{\varepsilon}_{d\to e}$ closer to $\bar{\varepsilon}_{e\to f}$, which braids the $[d\to e]$ strand over the $[b\to c]$ strand using the half-braiding $e_{[d\to e],[b\to c]}$.
Using naturality of $\mu$, and that $\cF$ is braided oplax monoidal, we have 
$$
\mu \circ \left(\id e_{[d\to e],[b\to c]}\id \right) = \cF\left(\id \beta_{\cT\dslash\cF(d\to e),\cT\dslash\cF(b\to c)} \id \right) \circ \mu,
$$
so the diagram on the right hand side above is equal to the following diagram:
$$
\begin{tikzpicture}[baseline=50,smallstring]
\node (a) at (0,0) {$a$};
\node (d) at (1,0) {$d$};
\node (F) at (5,0) {$[a\to b;d\to e;b\to c;e\to f]$};
\node[draw,rectangle] (m1) at (5,1) {$\qquad\qquad\mu\qquad \qquad$};
\draw[quadruple={[line width=.3mm,white] in
      [line width=.9mm,black] in
      [line width=1.5mm,white] in
      [line width=2.1mm,black]}] (F) to[in=-90,out=90] (m1.270);
\node[draw,rectangle] (ev-a-b) at (1,4) {$\bar{\varepsilon}_{a\to b}$};
\node[draw,rectangle] (ev-d-e) at (4,4) {$\bar{\varepsilon}_{d\to e}$};
\draw[knot] (m1.170) to[in=-90,out=90] (ev-a-b.-45);
\draw (a) to[in=-90,out=90] (ev-a-b.-135);
\node[draw,rectangle] (ev-b-c) at (2,5.5) {$\bar{\varepsilon}_{b\to c}$};
\node[draw,rectangle] (ev-e-f) at (5,5.5) {$\bar{\varepsilon}_{e\to f}$};
\draw (m1.45)  to[in=-90,out=90] (ev-e-f.-45);
\draw[knot] (m1.120) to[in=-90,out=90] (ev-b-c.-45);
\draw[knot] (d) to[in=-90,out=90] (ev-d-e.-135);
\draw[knot] (m1.165) to[in=-90,out=90] (ev-d-e.-45);
\draw (ev-d-e.90) to[in=-90,out=90] (ev-e-f.-135);
\draw (ev-a-b.90) to[in=-90,out=90] (ev-b-c.-135);
\node (c) at (2,6.5) {$c$};
\node (f) at (5,6.5) {$f$};
\draw (ev-b-c) to[in=-90,out=90] (c);
\draw (ev-e-f) to[in=-90,out=90] (f);
\end{tikzpicture}
=
\begin{tikzpicture}[baseline=50,smallstring]
\node (a) at (0,0) {$a$};
\node (d) at (1,0) {$d$};
\node (F) at (5,0) {$[a\to b;d\to e;b\to c;e\to f]$};
\node[draw,rectangle] (m1) at (5,2) {$\qquad\qquad\mu\qquad \qquad$};
\draw[quadruple={[line width=.3mm,white] in
      [line width=.9mm,black] in
      [line width=1.5mm,white] in
      [line width=2.1mm,black]}] (F) to[in=-90,out=90] (m1.270);
\node[draw,rectangle, fill=white] (B) at (5,1) {$\cF(\id\beta\id)$};
\node[draw,rectangle] (ev-a-b) at (1,4) {$\bar{\varepsilon}_{a\to b}$};
\node[draw,rectangle] (ev-d-e) at (4,4) {$\bar{\varepsilon}_{d\to e}$};
\draw[knot] (m1.170) to[in=-90,out=90] (ev-a-b.-45);
\draw (a) to[in=-90,out=90] (ev-a-b.-135);
\draw (m1.120) to[in=-90,out=90] (ev-d-e.-45);
\node[draw,rectangle] (ev-b-c) at (2,5.5) {$\bar{\varepsilon}_{b\to c}$};
\node[draw,rectangle] (ev-e-f) at (5,5.5) {$\bar{\varepsilon}_{e\to f}$};
\draw (m1.45)  to[in=-90,out=90] (ev-e-f.-45);
\draw[knot] (m1.165) to[in=-90,out=90] (ev-b-c.-45);
\draw[knot] (d) to[in=-90,out=90] (ev-d-e.-135);
\draw (ev-d-e.90) to[in=-90,out=90] (ev-e-f.-135);
\draw (ev-a-b.90) to[in=-90,out=90] (ev-b-c.-135);
\node (c) at (2,6.5) {$c$};
\node (f) at (5,6.5) {$f$};
\draw (ev-b-c) to[in=-90,out=90] (c);
\draw (ev-e-f) to[in=-90,out=90] (f);
\end{tikzpicture}
$$
The diagram on the right is the mate of the other diagram in the interchange law after unpacking.
\qed

\subsection{Rigidity and a left adjoint}

\begin{lem}
If $\cT$ is rigid, so is $\cT\dslash \cF$.
\end{lem}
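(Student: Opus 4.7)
The plan is to take the duals (and preduals) of objects in $\cT \dslash \cF$ to be the same as the duals in $\cT$, and to construct the (co)evaluation morphisms by transporting those of $\cT$ across the adjunction \eqref{eq:adjunction}. Concretely, for $c \in \cT$, I would define $\ev_c : 1_\cV \to \cT\dslash\cF(cc^* \to 1_\cT)$ to be the mate of
\[
cc^* \cF(1_\cV) \xrightarrow{\id_{cc^*}\, \epsilon} cc^* \xrightarrow{\ev^\cT_c} 1_\cT,
\]
where $\epsilon : \cF(1_\cV) \to 1_\cT$ is the oplax unitor component of $\cF^{\scriptscriptstyle Z}$, and to define $\coev_c : 1_\cV \to \cT\dslash\cF(1_\cT \to c^*c)$ as the mate of $\cF(1_\cV) \xrightarrow{\epsilon} 1_\cT \xrightarrow{\coev^\cT_c} c^*c$. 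The data for the predual $c_*$ is constructed identically using the predual zig-zags in $\cT$. Since $\cT\dslash\cF$ and $\cT$ have the same objects and tensor product on objects, the assumption (Remark \ref{rem:StrictDuals}) that $(ba)^* = a^*b^*$ in $\cT$ transfers immediately.

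Next I would verify the first zig-zag axiom, that the composite
\[
(j_c\, \coev_c\, \ev_c\, j_c) \circ \bigl((-\otimes_{\cT\dslash\cF}-)(-\otimes_{\cT\dslash\cF}-)\bigr) \circ (-\circ_{\cT\dslash\cF}-)
\]
equals $j_c$. The strategy is to compute mates under \eqref{eq:adjunction}. Using Lemmas \ref{lem:CompositionFormula} and \ref{lem:AlternateTensorProductFormula}, each occurrence of $-\otimes_{\cT\dslash\cF}-$ or $-\circ_{\cT\dslash\cF}-$ can be unpacked by replacing it with the corresponding composite involving the evaluation morphisms $\bar\varepsilon$ and an oplaxitor $\mu$. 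Repeated application reduces the mate to a $\cT$-morphism $c\,\cF(1_\cV) \to c$ built from $\ev^\cT_c$, $\coev^\cT_c$, identities, the half-braidings of $\cF^{\scriptscriptstyle Z}$, and a chain of oplaxitors applied to $\cF(1_\cV \cdot 1_\cV \cdot 1_\cV \cdot 1_\cV) \to \cF(1_\cV)^{\otimes 4}$. Each of the four factors $\cF(1_\cV)$ is then absorbed by an $\epsilon$ inside the image of $\bar\varepsilon$ for an identity hom object, and the remaining $\cT$-diagram becomes precisely $(\id_c\, \coev^\cT_c)(\ev^\cT_c\, \id_c) \circ (\id_c\, \epsilon)$, which by the zig-zag identity in $\cT$ collapses to $\id_c \circ (\id_c\, \epsilon)$, matching the mate of $j_c$. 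The second zig-zag axiom, and the two axioms for the predual, are verified by the same argument.

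The main obstacle is bookkeeping: since $\mu$ is only oplax (not an isomorphism), one cannot freely rearrange, and the braided interchange in $\cT\dslash\cF$ forces the half-braidings of $\cF^{\scriptscriptstyle Z}$ to appear when strands of $[-\to-]$ cross strands in $\cT$. These braidings must be shown to act trivially on the strands involved in the zig-zag, which is where the identification of the relevant hom objects as $\cT\dslash\cF(1_\cT\to\cdot)$ or $\cT\dslash\cF(\cdot\to 1_\cT)$ is crucial, so that the whole auxiliary $[-\to-]$ eventually reduces to $\cF(1_\cV)$ and is consumed by $\epsilon$. Once the diagrammatic reduction is organized carefully, the verification reduces to the zig-zag identity in $\cT$ and the oplax-unit coherence, with no additional hypothesis required.
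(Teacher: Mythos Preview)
Your proposal is correct and follows the same approach as the paper: define the (co)evaluation morphisms in $\cT\dslash\cF$ as the mates under Adjunction~\eqref{eq:adjunction} of the (co)evaluations in $\cT$, then verify the zig-zag axioms by taking mates and reducing to the zig-zag in $\cT$. The paper's proof is much terser—it simply declares the zig-zag verification ``straightforward''—whereas you spell out the mechanism (unpacking via Lemmas~\ref{lem:CompositionFormula} and~\ref{lem:AlternateTensorProductFormula}, collapsing the oplaxitors and half-braidings, and invoking the oplax unitor $\epsilon:\cF(1_\cV)\to 1_\cT$); you are also more careful than the paper in making the role of $\epsilon$ explicit, since the paper writes the mate as if $\cF(1_\cV)=1_\cT$.
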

\begin{proof}
For $c\in \cT$, we define $\ev_c\in \cV\left(1_\cV \to \cT\dslash \cF(cc^* \to 1_\cT)\right)$ to be the mate of $\ev_c\in \cT(cc^*\to 1_\cT)$ under Adjunction \eqref{eq:adjunction} with $a=cc^*$, $v=1_\cV$, and $b=1_\cT$.
The coevaluation is defined similarly. 
That these maps satisfy the zig-zag relations is straightforward.
\end{proof}

We now show that $\Tr_\cV=\cT \dslash \cF(1_{\cT \dslash \cF} \to -) : \cT \dslash \cF^\cV \to \cV$ admits a left adjoint $\cF': \cV \to \cT \dslash \cF^\cV$.
We begin with constructing an equivalence between $\cT \dslash \cF^\cV$ and $\cT$.

\begin{defn}
We define $G: \cT \dslash \cF^\cV \to \cT $ as follows.
Recall that $\cT$ and $\cT \dslash \cF^\cV$ have the same objects, so we define $G(a) = a$ for all $a\in \cT \dslash \cF^\cV$.
For $f\in \cT^\cV\dslash \cF(a\to b) = \cV(1_\cV \to \cT\dslash \cF(a\to b))$, we define $G(f)$ by
$$
G(f) 
= 
(\id_a \cF(f))\circ\bar{\varepsilon}_{a\to b}
=
\cL_a(f)\circ \bar{\varepsilon}_{a\to b}
=
\begin{tikzpicture}[baseline=1.6cm,smallstring]
\node (a) at (0,0) {$a$};
\node[draw,rectangle] (f) at (1.5,1) {$\cF(f)$};
\node[draw,rectangle] (evb-a-b) at (1,2) {$\bar{\varepsilon}_{a\to b} $};
\draw (a) to[in=-90,out=90] (evb-a-b.-135);
\draw (f) to[in=-90,out=90] (evb-a-b.-45);
\node (b) at (1,3) {$b$};
\draw (evb-a-b) to[in=-90,out=90] (b);
\end{tikzpicture}
$$
Notice that $G(f)\in \cT(a\to b)$ is the mate of $f$ under the adjunction \eqref{eq:adjunction} with $v=1_\cV$:
\begin{equation}
\label{eq:GFullyFaithful}
\cT^\cV\dslash \cF(a\to b)
=
\cV(1_\cV \to \cT\dslash \cF(a\to b))
\cong 
\cT(a\to b).
\end{equation}
\end{defn}

\begin{prop}
\label{prop:TIsomorphism}
The assignment $G$ is a monoidal equivalence of categories, where the tensorator $G(ab) \to G(a)G(b)$ is the identity.
\end{prop}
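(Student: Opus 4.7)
The strategy is to verify three properties of $G$: full faithfulness, essential surjectivity, and strict preservation of the monoidal structure (so that the identity really is a legitimate tensorator).

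Fully faithfulness follows immediately from \eqref{eq:GFullyFaithful}: the action of $G$ on hom-sets is by definition the adjunction bijection \eqref{eq:adjunction} specialized to $v=1_\cV$ (using $\cF(1_\cV)=1_\cT$, which is part of the unit-normalization of the oplax monoidal $\cF^{\scriptscriptstyle Z}$). Essential surjectivity is immediate since $G$ is the identity on objects.

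For functoriality, preservation of identities is by definition: $j_a$ is the mate of $\id_a$, hence $G(j_a)=\id_a$. For composition, given $f\in\cT\dslash\cF^\cV(a\to b)$ and $g\in\cT\dslash\cF^\cV(b\to c)$, I would expand $G(f\circ g)=(\id_a\,\cF((fg)\circ(-\circ_{\cT\dslash\cF}-)))\circ\bar\varepsilon_{a\to c}$, apply Lemma \ref{lem:CompositionFormula}, and use naturality and unit-normalization of $\mu$ (so $\cF(fg)\circ\mu=\cF(f)\cF(g)$) to rewrite it as $(\id_a\,\cF(f)\cF(g))\circ(\bar\varepsilon_{a\to b}\,\id)\circ\bar\varepsilon_{b\to c}$. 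Interchange in $\cT$ then reassembles this as $((\id_a\cF(f))\circ\bar\varepsilon_{a\to b})\circ((\id_b\cF(g))\circ\bar\varepsilon_{b\to c})=G(f)\circ G(g)$.

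For the monoidal structure, since $G$ is the identity on objects, the candidate tensorator $G(a)G(b)\to G(ab)$ is the identity and the unit comparison is trivial, so only the morphism-level equation $G(f\otimes g)=G(f)\otimes G(g)$ needs checking (for $f\in\cT\dslash\cF^\cV(a\to b)$, $g\in\cT\dslash\cF^\cV(c\to d)$). Expanding $G(f\otimes g)$ via Lemma \ref{lem:AlternateTensorProductFormula}, and again using naturality and unit-normalization of $\mu$, yields
\[
G(f\otimes g)=(\id_{ac}\,\cF(f)\cF(g))\circ(\id_a\,e_{c,[a\to b]}\,\id_{[c\to d]})\circ(\bar\varepsilon_{a\to b}\,\bar\varepsilon_{c\to d}).
\]
The key step is to absorb the half-braiding: since $\cF(f):1_\cT\to[a\to b]$ is in the image of $\cF^{\scriptscriptstyle Z}$, naturality of the half-braiding together with $e_{c,1_\cT}=\id_c$ gives $(\id_c\cF(f))\circ e_{c,[a\to b]}=\cF(f)\,\id_c$. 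Tensoring with $\id_a$ on the left and $\id_{[c\to d]}$ on the right, and then using interchange in $\cT$ to bring $\cF(g)$ into position, reduces the above expression to $(\id_a\,\cF(f)\,\id_c\,\cF(g))\circ(\bar\varepsilon_{a\to b}\,\bar\varepsilon_{c\to d})$, which by one more application of interchange is precisely $G(f)\otimes G(g)$.

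The main obstacle is the tensor product verification: a half-braiding appears in the mate formula for $-\otimes_{\cT\dslash\cF}-$, and one must invoke both the naturality of the half-braiding and the unit-normalization $e_{c,1_\cT}=\id_c$ to absorb it before interchange in $\cT$ can finish the job. The other checks are comparatively routine applications of Remark \ref{rem:Mates}, Lemma \ref{lem:CompositionFormula}, and interchange.
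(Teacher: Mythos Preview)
Your proposal is correct and follows essentially the same route as the paper: verify functoriality via Lemma~\ref{lem:CompositionFormula} and naturality of $\mu$, then verify strict monoidality via Lemma~\ref{lem:AlternateTensorProductFormula}, naturality of $\mu$, and absorption of the half-braiding. If anything, you are more explicit than the paper about the half-braiding step---the paper buries the passage from $(\id_c\,\cF(f))\circ e_{c,[a\to b]}$ to $\cF(f)\,\id_c$ inside a chain of diagram equalities labelled only ``by naturality of $\mu$'', whereas you correctly isolate it as the key point and justify it via naturality of $e_{c,-}$ together with $e_{c,1_\cT}=\id_c$.
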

\begin{proof}
Since $\cT$ and $\cT \dslash \cF^\cV$ have the same objects $G$ will automatically be essentially surjective provided it is a functor.
By \eqref{eq:GFullyFaithful}, $G$ will automatically be fully faithful provided it is a functor.

We show $G$ is a functor.
Since we defined $j_a\in \cV(1_\cV \to  \cT\dslash \cF(a\to a))$ to be the mate of the identity $1_a\in \cT(a\to a)$, we see that $G$ preserves identities.
Suppose now $f\in \cT^\cV\dslash \cF(a\to b)$ and $g\in \cT^\cV\dslash \cF(b\to c)$.
Recall that the composite of $f$ and $g$ in $\cT^\cV\dslash \cF$ is given by $(fg)\circ(-\circ_{\cT\dslash \cF} -)$.
We calculate using naturality of $\mu$ that
$$
G\big((fg)\circ (-\circ_{\cT\dslash \cF} -)\big)
=
\begin{tikzpicture}[baseline=50,smallstring]
\node (a) at (0,0) {$a$};
\node[draw,rectangle] (m) at (2,2) {$\mu_{[a\to b], [b\to c]} $};
\node[draw,rectangle, fill=white] (F) at (2,1) {$\cF(fg)$};
\draw[double] (F) to[in=-90,out=90] (m.270);
\node[draw,rectangle] (ev-a-b) at (1,3) {$\bar{\varepsilon}_{a\to b}$};
\draw (a) to[in=-90,out=90] (ev-a-b.-135);
\draw (m.135) to[in=-90,out=90] (ev-a-b.-45);
\node[draw,rectangle] (ev-b-c) at (2,4) {$\bar{\varepsilon}_{b\to c}$};
\draw (m.45) to[in=-90,out=90] (ev-b-c.-45);
\draw (ev-a-b) to[in=-90,out=90] (ev-b-c.-135);
\node (c) at (2,5) {$c$};
\draw (ev-b-c) to[in=-90,out=90] (c);
\end{tikzpicture}
\,\,\,\,=
\begin{tikzpicture}[baseline=50,smallstring]
\node (a) at (0,0) {$a$};
\node[draw,rectangle, fill=white] (f) at (1.5,1) {$\cF(f)$};
\node[draw,rectangle] (ev-a-b) at (1,2) {$\bar{\varepsilon}_{a\to b}$};
\draw (a) to[in=-90,out=90] (ev-a-b.-135);
\draw (f) to[in=-90,out=90] (ev-a-b.-45);
\node[draw,rectangle] (ev-b-c) at (2,3) {$\bar{\varepsilon}_{b\to c}$};
\node[draw,rectangle, fill=white] (g) at (3,2) {$\cF(g)$};
\draw (g) to[in=-90,out=90] (ev-b-c.-45);
\draw (ev-a-b) to[in=-90,out=90] (ev-b-c.-135);
\node (c) at (2,4) {$c$};
\draw (ev-b-c) to[in=-90,out=90] (c);
\end{tikzpicture}
\,\,=
G(f)\circ G(g),
$$
which proves $G$ is a functor.
Thus $G$ is an equivalence of categories.

We now endow $G$ with the structure of a strong monoidal functor by taking the tensorator $G(ab) \to G(a)G(b)$ to be the identity.
Indeed, for $f\in \cT \dslash \cF^\cV(a\to b)$ and $g\in \cT \dslash \cF^\cV(c\to d)$, by naturality of $\mu$, together with Lemma \ref{lem:AlternateTensorProductFormula}, we see that
$$
G(fg)
=
\begin{tikzpicture}[baseline=50,smallstring]
\node (a) at (0,-.5) {$a$};
\node (c) at (1,-.5) {$c$};
\node[draw,rectangle] (F) at (3,0) {$\cF(fg)$};
\node[draw,rectangle] (m) at (3,1) {$\mu_{[a\to b], [c\to d]} $};
\draw[double] (F) to[in=-90,out=90] (m.270);
\node[draw,rectangle] (ev-a-b) at (1,3) {$\bar{\varepsilon}_{a\to b}$};
\node[draw,rectangle] (ev-c-d) at (2.5,3) {$\bar{\varepsilon}_{c\to d}$};
\draw[knot] (m.135) to[in=-90,out=90] (ev-a-b.-45);
\draw[knot] (c) to[in=-90,out=90] (ev-c-d.-135);
\draw (m.45) to[in=-90,out=90] (ev-c-d.-45);
\draw (a) to[in=-90,out=90] (ev-a-b.-135);
\node (b) at (1,4) {$b$};
\node (d) at (2.5,4) {$d$};
\draw (ev-a-b) to[in=-90,out=90] (b);
\draw (ev-c-d) to[in=-90,out=90] (d);
\end{tikzpicture}
=
\begin{tikzpicture}[baseline=50,smallstring]
\node (a) at (-.5,0) {$a$};
\node (c) at (1,0) {$c$};
\node[draw,rectangle] (Ff) at (3,1) {$\cF(f)$};
\node[draw,rectangle] (Fg) at (4.5,1) {$\cF(g)$};
\node[draw,rectangle] (ev-a-b) at (1.5,3) {$\bar{\varepsilon}_{a\to b}$};
\node[draw,rectangle] (ev-c-d) at (3,3) {$\bar{\varepsilon}_{c\to d}$};
\draw[knot] (Ff) to[in=-90,out=90] (ev-a-b.-45);
\draw (Fg) to[in=-90,out=90] (ev-c-d.-45);
\draw[knot] (c) to[in=-90,out=90] (ev-c-d.-135);
\draw (a) to[in=-90,out=90] (ev-a-b.-135);
\node (b) at (1.5,4) {$b$};
\node (d) at (3,4) {$d$};
\draw (ev-a-b) to[in=-90,out=90] (b);
\draw (ev-c-d) to[in=-90,out=90] (d);
\end{tikzpicture}
=
\begin{tikzpicture}[baseline=50,smallstring]
\node (a) at (0,0) {$a$};
\node (c) at (3,0) {$c$};
\node[draw,rectangle] (Ff) at (2,1) {$\cF(f)$};
\node[draw,rectangle] (Fg) at (5,1) {$\cF(g)$};
\node[draw,rectangle] (ev-a-b) at (1,2) {$\bar{\varepsilon}_{a\to b}$};
\node[draw,rectangle] (ev-c-d) at (4,2) {$\bar{\varepsilon}_{c\to d}$};
\draw[knot] (Ff) to[in=-90,out=90] (ev-a-b.-45);
\draw (Fg) to[in=-90,out=90] (ev-c-d.-45);
\draw[knot] (c) to[in=-90,out=90] (ev-c-d.-135);
\draw (a) to[in=-90,out=90] (ev-a-b.-135);
\node (b) at (1,3) {$b$};
\node (d) at (4,3) {$d$};
\draw (ev-a-b) to[in=-90,out=90] (b);
\draw (ev-c-d) to[in=-90,out=90] (d);
\end{tikzpicture}
=
G(f)G(g).
$$
Thus $G$ is a monoidal equivalence of categories.
\end{proof}

Now since $\cT$ and $\cT\dslash \cF$ have the same objects, and since $G: \cT \dslash \cF^\cV \to \cT$ is a monoidal equivalence which is the identity on objects, we may unambiguously define $G^{-1} : \cT \to \cT \dslash \cF^\cV$.
We now use $G^{-1}$ to define $\cF' : \cV \to \cT \dslash \cF^\cV$ as the composite $\cF \circ G^{-1}$.
Thus we see that starting with a braided oplax monoidal functor $\cF^{\scriptscriptstyle Z}: \cV \to Z(\cT)$ where $\cT$ is rigid, we get a rigid $\cV$-monoidal category $\cT \dslash \cF$ such that the functor $\cT \dslash \cF(1_\cT \to -)$ admits a left adjoint $\cF'$.
Indeed,
\begin{equation}
\label{eq:CompositeAdjunction}
\cT'(\cF'(v) \to c) 
\underset{G}{\cong}
\cT(\cF(v) \to c)
\underset{\eqref{eq:adjunction}}{\cong}
\cV(v \to \cT \dslash \cF(1_{\cT \dslash \cF} \to c)).
\end{equation}

%
%

\section{Equivalence}

Finally, we shall prove that the constructions described in the previous two sections are inverses.
Theorems \ref{thm:FromFunctorToEnrichedAndBack-oplax} and \ref{thm:FromEnrichedToFunctorAndBack-oplax} below combine to prove Theorem \ref{thm:Main}.

As in the previous sections, we assume $\cV$ is braided.
Before stating Theorem \ref{thm:FromFunctorToEnrichedAndBack-oplax}, we recall the definition of a morphism between two pairs $(\cT, \cF)$ and $(\cT', \cF')$ from \cite[Def.~3.2]{1607.06041}, suitably modified for oplax monoidal functors.

\begin{defn}
\label{defn:PairMorphism}
Suppose we have two rigid monoidal categories $\cT,\cT'$ equipped with braided oplax monoidal functors $(\cF^{\scriptscriptstyle Z},\mu): \cV \to Z(\cT)$ and $({\cF'}^{\scriptscriptstyle Z},\mu'): \cV\to Z(\cT')$.
A morphism $(G, \nu, \gamma):(\cT', {\cF'}^{\scriptscriptstyle Z}) \to (\cT, \cF^{\scriptscriptstyle Z}) $ consists of an oplax monoidal functor $(G,\nu): \cT'\to \cT$ and an \emph{action coherence} monoidal natural isomorphism $\gamma: \cF' \circ G \Rightarrow \cF$.
This consists of a family of natural isomorphisms $\gamma_v: G(\cF'(v)) \to \cF(v)$ such that the following diagram commutes:
\begin{equation}
\label{eq:PairMorphism}
\begin{tikzcd}
G(\cF'(uv))
	\ar[r, "\gamma_{uv}"]
	\ar[d, "G(\mu'_{u,v})"]
&
\cF(uv)
	\ar[r, "\mu_{u,v}"]
&
\cF(u)\cF(v)
\\
G(\cF'(u)\cF'(v))
	\ar[rr, "\nu_{\cF'(u),\cF'(v)}"]
&&
G(\cF'(u))G(\cF'(v))
	\ar[u, "\gamma_u\gamma_v"]
\end{tikzcd}
\end{equation}
We also require that $\gamma$ is strictly unital, i.e., $\gamma_{1_\cT}: G(\cF'(1_\cV))=1_{\cT} \to \cF(1_\cV)=1_{\cT}$ is equal to the identity.

Moreover, we require the following compatibility with the half-braidings.
For all $c\in\cT$ and $v\in \cV$,
\begin{equation}
\label{eq:CompatibilityWithHalfBraidings}
\begin{tikzcd}
G(c\cF'(v))
	\ar[rr,"\nu_{c,\cF'(v)}"]
	\ar[d, "G(e_{c,\cF'(v)})"]
&&
G(c)G(\cF'(v))
	\ar[rr, "\id_{G(c)}\gamma_{v}"]
&&
G(c)\cF(v)
	\ar[d,"e_{G(c), \cF(v)}"]
\\
G(\cF'(v)c)
	\ar[rr, "\nu_{\cF(v),c}"]
&&
G(\cF'(v))G(c)
	\ar[rr, "\gamma_v\id_{G(c)}"]
&&
\cF(v)G(c)
\end{tikzcd}
\end{equation}

A morphism $(G, \nu, \gamma): (\cT', {\cF'}^{\scriptscriptstyle Z})\to (\cT, \cF^{\scriptscriptstyle Z})$ is called an equivalence if $G$ is a strong monoidal equivalence of categories.
\end{defn}

\begin{remark}
Following \cite[\S~3.1]{1607.06041}, there is a 2-category of pairs $(\cT, \cF^{\scriptscriptstyle Z})$ where $\cT$ is a monoidal category and $\cF^{\scriptscriptstyle Z}$ is a braided oplax monoidal functor $\cV \to Z(\cT)$.
One can then discuss what it means for a 1-morphism to be an isomorphism in this 2-category.
However, we will not pursue this here, and we will be content to show a morphism is an equivalence as defined above.
\end{remark}

Recall that in Proposition \ref{prop:TIsomorphism}, we constructed a monoidal equivalence $G: \cT \dslash \cF^\cV \to \cT$ which allowed us to define $\cF' = \cF \circ G^{-1}$, which is a left adjoint of the functor $\cT\dslash \cF(1_\cT \to -)$.
By Section \ref{sec:Mates}, $\cF'$ lifts to a braided oplax monoidal functor $({\cF'}^{\scriptscriptstyle Z},\mu'):\cV \to Z(\cT \dslash \cF^\cV)$.
%

\begin{thm}
\label{thm:FromFunctorToEnrichedAndBack-oplax}
Suppose $\cT$ is rigid and $\cF^{\scriptscriptstyle Z}: \cV \to Z(\cT)$ is a braided oplax monoidal functor which admits a right adjoint.
The pairs $(\cT \dslash \cF^{\cV}, {\cF'}^{\scriptscriptstyle Z})$ and $(\cT, \cF^{\scriptscriptstyle Z})$ are equivalent in the sense of Definition \ref{defn:PairMorphism} above.
%
\end{thm}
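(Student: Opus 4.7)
The plan is to exhibit the equivalence as the triple $(G, \id, \id)$, where $G: \cT \dslash \cF^\cV \to \cT$ is the strict monoidal equivalence of Proposition \ref{prop:TIsomorphism}. Because $G$ is strict monoidal (tensorator is the identity) we can take $\nu = \id$. Because $\cF'$ was \emph{defined} as $\cF \circ G^{-1}$ and $G$ is the identity on objects, we have $G(\cF'(v)) = \cF(v)$ strictly, so we can take the action coherence isomorphism to be $\gamma_v = \id_{\cF(v)}$ (which is trivially strictly unital and natural). With these choices the outer commutative diagrams of Definition \ref{defn:PairMorphism} reduce to two identities:
\begin{equation*}
G(\mu'_{u,v}) = \mu_{u,v} \in \cT(\cF(uv) \to \cF(u)\cF(v))
\end{equation*}
and
\begin{equation*}
G(e'_{c,\cF'(v)}) = e_{c,\cF(v)} \in \cT(c\cF(v) \to \cF(v)c),
\end{equation*}
where $\mu',e'$ are the oplaxitor and half-braidings extracted in Sections \ref{sec:Mates}--\ref{sec:extracting-oplax} from the pair $(\cT\dslash\cF^\cV,{\cF'}^{\scriptscriptstyle Z})$, and $\mu,e$ are the original data of $\cF^{\scriptscriptstyle Z}$.

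For the first identity, I would argue by uniqueness. By Lemma \ref{lem:Oplaxitor}, $\mu'_{u,v}$ is uniquely determined as the mate, under the adjunction ${\cF' \dashv \Tr_\cV}$ (adjunction \eqref{eq:CompositeAdjunction}), of the composite ${(\eta'_u \eta'_v) \circ \text{laxitor}': uv \to \Tr_\cV(\cF'(u)\cF'(v))}$. Via $G$, this adjunction is identified with the adjunction ${\cF \dashv \bar{\cR}_{1_\cT}}$ in $\cT$, under which $\eta'_v$ corresponds to the unit $\eta_v$ and $\cT\dslash\cF(1_\cT\to -) = \bar{\cR}_{1_\cT}$ by definition. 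The key compatibility is then that the laxitor \eqref{eq:laxitor} on $\cT\dslash\cF(1_\cT \to -)$, namely $- \otimes_{\cT\dslash\cF} -$, corresponds under $G$ to the canonical lax monoidal structure on $\bar{\cR}_{1_\cT}$ induced by the oplax structure on $\cF$. This last fact is verified by taking mates: Proposition \ref{prop:MateOfTensorProduct} specialised to $a = c = 1_\cT$ says precisely that the mate of $-\otimes_{\cT\dslash\cF}-$ is the composite $\cF(\bar{\cR}_{1_\cT}(a)\bar{\cR}_{1_\cT}(b)) \xrightarrow{\mu} \cF\bar{\cR}_{1_\cT}(a) \cF\bar{\cR}_{1_\cT}(b) \xrightarrow{\bar\varepsilon_{1\to a}\bar\varepsilon_{1\to b}} ab$, which is the textbook formula for the induced lax structure on a right adjoint. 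Since the oplax structure on a left adjoint is uniquely determined by the lax structure on the right adjoint, we conclude $G(\mu'_{u,v})=\mu_{u,v}$.

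For the second identity, I would proceed analogously, but directly. By Lemma \ref{lem:HalfBraidings}, $e'_{c,\cF'(v)}$ is uniquely characterised as the morphism in $\cT\dslash\cF^\cV(c\cF'(v)\to \cF'(v)c)$ whose mate under the $\cL_c \dashv \bar{\cR}_c$ adjunction for $\cT\dslash\cF$ is $(\eta'_v j'_c) \circ (-\otimes_{\cT\dslash\cF}-)$. Applying $G$ translates this adjunction to the analogous adjunction in $\cT$ (namely $- \cF(v) \dashv \cF(v)^* \cdot -$ composed with $\cF \dashv \bar{\cR}_{1_\cT}$). Under these identifications, $\eta'_v$ becomes $\eta_v$ and $j'_c$ becomes $\id_c$ (using that $j'_c$ is defined as the mate of $\id_c$), while $-\otimes_{\cT\dslash\cF}-$ becomes $\mu$ composed with evaluations, as above. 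Chasing through exactly mirrors the proof of Lemma \ref{lem:HalfBraidings}: the resulting mate in $\cT$ is precisely the defining mate of the half-braiding $e_{c, \cF(v)}$ inherited from $\cF^{\scriptscriptstyle Z}: \cV \to Z(\cT)$.

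The main obstacle is the last point: carefully tracking the identification of units, counits, and laxitors under the strict monoidal equivalence $G$ (which on morphisms is itself defined via mates). Once these bookkeeping steps are done, the equivalence statement is immediate from Proposition \ref{prop:TIsomorphism}, since $G$ is already a strong monoidal equivalence of categories.
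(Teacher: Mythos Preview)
Your proposal is correct and follows essentially the same route as the paper: take the equivalence to be $(G,\id,\id)$, reduce \eqref{eq:PairMorphism} and \eqref{eq:CompatibilityWithHalfBraidings} to the bare identities $G(\mu'_{u,v})=\mu_{u,v}$ and $G(e'_{c,\cF'(v)})=e_{c,\cF(v)}$, and verify these by passing to mates, using that the adjunction \eqref{eq:CompositeAdjunction} defining $\cF'$ factors through $G$ and adjunction \eqref{eq:adjunction}.

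The only difference is stylistic. For $G(\mu')=\mu$ the paper computes the mate directly: expanding via the definition of $-\otimes_{\cT\dslash\cF}-$ (which is built to match Proposition~\ref{prop:MateOfTensorProduct}) and naturality of $\mu$, then collapsing using the triangle identity $\cF(\eta_v)\circ\bar\varepsilon_{1_\cT\to\cF(v)}=\id_{\cF(v)}$. Your uniqueness argument (matching the induced lax structures on the right adjoints and invoking that the oplax structure on the left adjoint is determined) is the same computation packaged abstractly; it works, but you should be aware that unpacking it is exactly the paper's three-line diagram chase. One small wording issue: in your half-braiding paragraph, the phrase ``the defining mate of the half-braiding $e_{c,\cF(v)}$ inherited from $\cF^{\scriptscriptstyle Z}$'' is off---$e_{c,\cF(v)}$ is given data, not a mate. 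What actually happens is that the half-braiding appears as the crossing in the definition of $-\otimes_{\cT\dslash\cF}-$, and after the triangle identities kill $\cF(\eta_v)\circ\bar\varepsilon$ and $\cL_c(j_c)\circ\bar\varepsilon_{c\to c}$, only that crossing survives.
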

\begin{proof}
For $v\in \cV$, we define the action coherence morphism $\gamma_v : G(\cF'(v)) \to \cF(v)$ to be the identity $\id_{\cF(v)}$.
We must now prove that Equations \eqref{eq:PairMorphism} and \eqref{eq:CompatibilityWithHalfBraidings} hold.
To do so, we note that the braided oplax monoidal lift ${\cF'}^{\scriptscriptstyle Z}$ of $\cF'$ is defined using Adjunction \eqref{eq:TraceAdjunction}, which in our case, is Adjunction \eqref{eq:CompositeAdjunction}.
Thus \eqref{eq:CompositeAdjunction} factors through $G$ and \eqref{eq:adjunction} with $a=1_{\cT \dslash \cF}$.
This means that to calculate $G(\mu'_{u,v})$ and $G(e_{c , \cF'(v)})$, it suffices to calculate the mates of $\mu'_{u,v}$ and $e_{c , \cF'(v)}$ respectively under Adjunction \eqref{eq:adjunction}.

%

To prove \eqref{eq:PairMorphism} holds, we must show that $\mu_{u,v}$ and $G(\mu'_{u,v})$, which are both maps in $\cT(\cF(uv) \to \cF(u)\cF(v))$, are equal on the nose.
Recall from Lemma \ref{lem:Oplaxitor} that the oplaxitor $\mu'_{u,v}\in \cT \dslash \cF^\cV(\cF'(uv) \to \cF'(u)\cF'(v))$ is defined as the mate of $(\eta_u\eta_v)\circ(-\otimes_{\cT \dslash \cF} -)\in \cV(uv \to \cT\dslash \cF(1_\cT \to \cF(u)\cF(v)))$ under Adjunction \eqref{eq:TraceAdjunction}.
By the above paragraph, we have that $G(\mu'_{u,v})$ is given by the mate of $(\eta_u\eta_v)\circ(-\otimes_{\cT \dslash \cF} -)\in \cV(uv \to \cT\dslash \cF(1_\cT \to \cF(u)\cF(v)))$ under Adjunction \eqref{eq:adjunction}.
Thus by Proposition \ref{prop:MateOfTensorProduct} and Remark \ref{rem:Mates},
$$
G(\mu'_{u,v})
=
\begin{tikzpicture}[baseline=40,smallstring]
\node (a) at (-3,-1) {$1_\cT$};
\node (c) at (-1,-1) {$1_\cT$};
\node (Fuv) at (3,-1) {$\cF(uv)$};
\node[draw,rectangle] (F) at (3,0) {$\cF(\eta_u\eta_v)$};
\node[draw,rectangle] (m) at (3,1) {$\mu_{[1_\cT\to \cF(u)], [1_\cT\to \cF(u)]} $};
\draw[double] (Fuv) to[in=-90,out=90] (F.270);
\draw[double] (F) to[in=-90,out=90] (m.270);
\node[draw,rectangle] (ev-1-u) at (0,3) {$\bar{\varepsilon}_{1_\cT\to \cF(u)}$};
\node[draw,rectangle] (ev-1-v) at (2,3) {$\bar{\varepsilon}_{1_\cT\to \cF(v)}$};
\draw[knot] (m.135) to[in=-90,out=90] (ev-1-u.-45);
\draw[knot, dotted] (c) to[in=-90,out=90] (ev-1-v.-135);
\draw (m.45) to[in=-90,out=90] (ev-1-v.-45);
\draw[dotted] (a) to[in=-90,out=90] (ev-1-u.-135);
\node (b) at (0,4) {$\cF(u)$};
\node (d) at (2,4) {$\cF(v)$};
\draw (ev-1-u) to[in=-90,out=90] (b);
\draw (ev-1-v) to[in=-90,out=90] (d);
\end{tikzpicture}
=
\begin{tikzpicture}[baseline=40,smallstring]
\node (a) at (-1,-1) {$1_\cT$};
\node (c) at (1,-1) {$1_\cT$};
\node (Fuv) at (4,-1) {$\cF(uv)$};
\node[draw,rectangle] (m) at (4,0) {$\mu_{u,v}$};
\node[draw,rectangle] (Ff) at (3,1) {$\cF(\eta_u)$};
\node[draw,rectangle] (Fg) at (5,1) {$\cF(\eta_v)$};
\node[draw,rectangle] (ev-a-b) at (1,3) {$\bar{\varepsilon}_{1_\cT\to \cF(u)}$};
\node[draw,rectangle] (ev-c-d) at (3,3) {$\bar{\varepsilon}_{1_\cT\to \cF(v)}$};
\draw[double] (Fuv) to[in=-90,out=90] (m.-90);
\draw (m.135) to[in=-90,out=90] (Ff.-90);
\draw (m.45) to[in=-90,out=90] (Fg.-90);
\draw[knot] (Ff) to[in=-90,out=90] (ev-a-b.-45);
\draw (Fg) to[in=-90,out=90] (ev-c-d.-45);
\draw[knot, dotted] (c) to[in=-90,out=90] (ev-c-d.-135);
\draw[dotted] (a) to[in=-90,out=90] (ev-a-b.-135);
\node (b) at (1,4) {$\cF(u)$};
\node (d) at (3,4) {$\cF(v)$};
\draw (ev-a-b) to[in=-90,out=90] (b);
\draw (ev-c-d) to[in=-90,out=90] (d);
\end{tikzpicture}
=
\mu_{u,v},
$$
since for all $v\in \cV$, $\cF(\eta_v)\circ \bar{\varepsilon}_{1_\cT\to \cF(v)} = \id_{\cF(v)}$.

Finally, proving \eqref{eq:CompatibilityWithHalfBraidings} reduces to showing that the half-braidings $e_{c,\cF(v)}$ and $G(e_{c, \cF'(v)})$, which are both maps in $\cT(a\cF(v) \to \cF(v)a)$, are equal on the nose.
Recall from \eqref{eq:MateOfHalfBraiding} that $e_{c, \cF'(v)}$ is the mate of $(\eta_v j_c)\circ(-\otimes_{\cT\dslash \cF} -)$ under Adjunction \eqref{eq:La-Adjunction}.
Again by the first paragraph in the proof, we have that $G(e_{c, \cF'(v)})$ is the mate of $(\eta_v j_c)\circ(-\otimes_{\cT\dslash \cF} -)$ under Adjunction \eqref{eq:adjunction}.
By Proposition \ref{prop:MateOfTensorProduct} and Remark \ref{rem:Mates},
$$
G(e_{c, \cF'(v)})
=
\begin{tikzpicture}[baseline=40,smallstring]
\node (a) at (-3,-1) {$1_\cT$};
\node (c) at (-1,-1) {$c$};
\node (Fv) at (3,-1) {$\cF(v)$};
\node[draw,rectangle] (F) at (3,0) {$\cF(\eta_vj_c)$};
\node[draw,rectangle] (m) at (3,1) {$\mu_{[1_\cT\to \cF(v)], [c\to c]} $};
\draw (Fv) to[in=-90,out=90] (F.270);
\draw[double] (F) to[in=-90,out=90] (m.270);
\node[draw,rectangle] (ev-1-v) at (0,3) {$\bar{\varepsilon}_{1_\cT\to \cF(v)}$};
\node[draw,rectangle] (ev-c-c) at (2,3) {$\bar{\varepsilon}_{c\to c}$};
\draw[knot] (m.135) to[in=-90,out=90] (ev-1-v.-45);
\draw[knot] (c) to[in=-90,out=90] (ev-c-c.-135);
\draw (m.45) to[in=-90,out=90] (ev-c-c.-45);
\draw[dotted] (a) to[in=-90,out=90] (ev-1-v.-135);
\node (b) at (0,4) {$\cF(v)$};
\node (d) at (2,4) {$c$};
\draw (ev-1-v) to[in=-90,out=90] (b);
\draw (ev-c-c) to[in=-90,out=90] (d);
\end{tikzpicture}
=
\begin{tikzpicture}[baseline=50,smallstring]
\node (a) at (-.5,0) {$1_\cT$};
\node (c) at (1,0) {$c$};
\node (Fv) at (3,0) {$\cF(v)$};
\node[draw,rectangle] (Ff) at (3,1) {$\cF(\eta_v)$};
\node[draw,rectangle] (Fg) at (4.5,1) {$\cF(j_c)$};
\node[draw,rectangle] (ev-a-b) at (1.25,3) {$\bar{\varepsilon}_{1_\cT\to \cF(v)}$};
\node[draw,rectangle] (ev-c-d) at (3,3) {$\bar{\varepsilon}_{c\to c}$};
\draw (Fv) to[in=-90,out=90] (Ff.-90);
\draw[knot] (Ff) to[in=-90,out=90] (ev-a-b.-45);
\draw (Fg) to[in=-90,out=90] (ev-c-d.-45);
\draw[knot] (c) to[in=-90,out=90] (ev-c-d.-135);
\draw[dotted] (a) to[in=-90,out=90] (ev-a-b.-135);
\node (b) at (1.25,4) {$\cF(v)$};
\node (d) at (3,4) {$c$};
\draw (ev-a-b) to[in=-90,out=90] (b);
\draw (ev-c-d) to[in=-90,out=90] (d);
\end{tikzpicture}
=
\begin{tikzpicture}[baseline=20,smallstring]
\node (bottom-L) at (0,0) {$c$};
\node (bottom-R) at (2,0) {$\cF(v)$};
\node (top-L) at (0,2) {$\cF(v)$};
\node (top-R) at (2,2) {$c$};
\draw[knot] (bottom-R.90)  to[in=-90,out=90] (top-L.-90);
\draw[knot] (bottom-L.90)  to[in=-90,out=90] (top-R.-90);
\end{tikzpicture}
=
e_{c,\cF(v)},
$$
since
$\cF(\eta_v)\circ \bar{\varepsilon}_{1_\cT\to \cF(v)} = \id_{\cF(v)}$
and
$(\id_c \cF(j_c))\circ \bar{\varepsilon}_{c\to c}=\cL_c(j_c)\circ \bar{\varepsilon}_{c\to c}=\id_c$.
We are finished.
\end{proof}

As before, we have  $\cV$ is a fixed a braided monoidal category.
Suppose $\cC$ is a rigid $\cV$-monoidal category such that the categorified `trace' $\Tr_\cV = \cC(1_\cC \to -): \cC^\cV \to \cV$ admits a left adjoint $\cF : \cV \to \cC^\cV$.
Following Section \ref{sec:extracting-oplax}, $\cF$ lifts to a braided oplax monoidal functor $\cF^{\scriptscriptstyle Z} : \cV \to Z(\cC^\cV)$.

\begin{thm}
\label{thm:FromEnrichedToFunctorAndBack-oplax}
%
There is a $\cV$-monoidal equivalence between $\cC$ and $\cC^\cV \dslash \cF$.
\end{thm}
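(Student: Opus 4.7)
The plan is to construct an explicit strict $\cV$-monoidal equivalence $\Phi:\cC\to\cC^\cV\dslash\cF$ that is the identity on objects and encodes a Yoneda-style identification of the hom-objects. The key observation is that the functor $\cL_a:\cV\to\cC^\cV$, $v\mapsto a\cF(v)$, admits two \emph{a priori} distinct right adjoints: by Adjunction~\eqref{eq:La-Adjunction} in Section~\ref{sec:ExtraStructureFromRigidity}, it is $b\mapsto\cC(a\to b)$; by Adjunction~\eqref{eq:adjunction} applied with $\cT=\cC^\cV$, it is $b\mapsto\cC^\cV\dslash\cF(a\to b)$. By uniqueness of adjoints, there is a canonical natural isomorphism $\Phi_{a\to b}:\cC(a\to b)\to\cC^\cV\dslash\cF(a\to b)$ in $\cV$; concretely, $\Phi_{a\to b}$ is the mate of $\varepsilon_{a\to b}$ under \eqref{eq:adjunction}, characterised by the intertwining identity $(\id_a\cF(\Phi_{a\to b}))\circ\bar\varepsilon_{a\to b}=\varepsilon_{a\to b}$.

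To show that these isomorphisms assemble into a strict $\cV$-monoidal functor, I would verify preservation of identities, composition, and tensor product. For composition, compare mates under~\eqref{eq:adjunction} of the two sides of the desired identity $(-\circ_\cC-)\circ\Phi_{a\to c}=(\Phi_{a\to b}\Phi_{b\to c})\circ(-\circ_{\cC^\cV\dslash\cF}-)$. By Remark~\ref{rem:Mates} and the intertwining identity, the mate of the left-hand side is $\cL_a(-\circ_\cC-)\circ\varepsilon_{a\to c}$, which is exactly the diagram described in Proposition~\ref{prop:MateOfComposition} (with $\mu_{\{a\to b\},\{b\to c\}}$ and two $\varepsilon$'s). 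By the definition of $-\circ_{\cC^\cV\dslash\cF}-$ in Section~\ref{sec:EnrichingT}, the mate of the right-hand side is the analogous diagram with $\mu_{[a\to b],[b\to c]}$ and two $\bar\varepsilon$'s, precomposed with $\id_a\cF(\Phi_{a\to b}\Phi_{b\to c})$; naturality of $\mu$ moves $\cF(\Phi)\cF(\Phi)$ past the oplaxitor, and then the intertwining identity converts both $\bar\varepsilon$'s into $\varepsilon$'s, so the two mates coincide. Preservation of $-\otimes-$ is an entirely parallel mate computation, using Proposition~\ref{prop:MateOfTensorProduct} in place of Proposition~\ref{prop:MateOfComposition}; the braiding that appears there matches the half-braiding on $\cF^{\scriptscriptstyle Z}(v)$ by construction. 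Preservation of identities is a short check: both $j_a^\cC$ and $j_a^{\cC^\cV\dslash\cF}$ mate to the identity on $a$ in $\cC^\cV$ under their respective adjunctions, so $\Phi_{a\to a}\circ j_a^{\cC^\cV\dslash\cF}=j_a^\cC$ by the defining property of $\Phi$.

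Since $\Phi$ is the identity on objects we have $\Phi(ab)=\Phi(a)\Phi(b)$, and the tensorator may be taken to be $\alpha_{a,b}=j_{ab}$, making $\Phi$ strictly $\cV$-monoidal; each $\Phi_{a\to b}$ is already invertible in $\cV$, so $\Phi$ is an equivalence. The main obstacle is the bookkeeping in the composition and tensor-product mate calculations: one must carefully move $\cF(\Phi)$ past the oplaxitor using naturality of $\mu$, apply the intertwining identity twice to convert both $\bar\varepsilon$'s into $\varepsilon$'s, and, in the tensor case, track that the braiding introduced by $e_{-,\cF(-)}$ via Proposition~\ref{prop:MateOfTensorProduct} lines up with the analogous braiding used in defining $-\otimes_{\cC^\cV\dslash\cF}-$ without any stray twists.
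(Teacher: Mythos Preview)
Your proposal is correct and follows essentially the same approach as the paper: your $\Phi$ is exactly the paper's functor $\cG$, defined as the identity on objects with $\cG_{a\to b}$ the mate of $\varepsilon_{a\to b}$ under Adjunction~\eqref{eq:adjunction}, with trivial tensorator $j_{ab}$, and your mate computation for preservation of composition matches the paper's verbatim (the paper likewise omits the tensor-product case as similar). The only cosmetic difference is that you invoke uniqueness of right adjoints to deduce that $\Phi_{a\to b}$ is invertible, whereas the paper explicitly constructs the inverse functor $\cH$ (with $\cH_{a\to b}$ the mate of $\bar\varepsilon_{a\to b}$ under~\eqref{eq:La-Adjunction}) and checks $\cG_{a\to b}\circ\cH_{a\to b}=\id$ and $\cH_{a\to b}\circ\cG_{a\to b}=\id$ directly via the intertwining identities~\eqref{eq:EvBarGFormula} and~\eqref{eq:EvBarHFormula}.
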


Before we begin the proof, we give some helpful notation and lemmas.
Thinking of $\cC^\cV$ as $\cT$, as before, we define
\begin{align*}
[a\to b] &\overset{\text{def}}{=} \cF(\cC^\cV\dslash \cF(a\to b))
\\
[a \to b;c\to d;\cdots ] &\overset{\text{def}}{=} \cF(\cC^\cV\dslash \cF(a \to b)\cC^\cV\dslash \cF(c \to d) \cdots)
\\
\{a \to b\} &\overset{\text{def}}{=} \cF(\cC(a \to b))
\\
\{a \to b;c\to d;\cdots \} &\overset{\text{def}}{=} \cF(\cC(a \to b)\cC(c \to d) \cdots).
\end{align*}

We have two adjunctions: \eqref{eq:La-Adjunction} and \eqref{eq:adjunction} with $\cT=\cC^\cV$.
\begin{align}
\label{eq:adjunctionCV}
\cC^\cV(a \cF(v) \to b)
&\cong
\cV(v \to \cC(a \to b) ) 
\\
\cC^\cV(a \cF(v) \to b)
&\cong 
\cV(v \to \cC^\cV \dslash \cF(a \to b) ) 
\label{eq:adjunctionCVbar}
\end{align}
The first adjunction uses the right adjoint $\cR_a: \cC^\cV \to \cV$ of $\cL_a$ given by $b\mapsto \cC(a\to b)$ from \S \ref{sec:ExtraStructureFromRigidity}.
The second adjunction uses the right adjoint $\bar{\cR}_a: \cC^\cV \to \cV$ of $\cL_a$ given by $\bar{\cR}_a(v) = \cC^\cV\dslash \cF(a\to b)$ from \S \ref{sec:EnrichingT}. 

As in the previous sections, $\varepsilon_{a\to b}: a\{a\to b\}\to b$ is the mate of $\id\in \cV(\cC(a\to b) \to \cC(a\to b))$ under \eqref{eq:adjunctionCV}, and $\bar{\varepsilon}_{a\to b}:a[a\to b]\to b$ is the mate of $\id\in \cV(\cC^\cV\dslash \cF(a\to b) \to \cC^\cV\dslash \cF(a\to b))$ under \eqref{eq:adjunctionCVbar}.



We now use Adjunctions \eqref{eq:adjunctionCV} and \eqref{eq:adjunctionCVbar} to construct $\cV$-monoidal functors
$\cG: \cC^\cV\dslash \cF\to \cC$ and $\cH : \cC\to \cC^\cV\dslash\cF$ which witness an equivalence of $\cV$-monoidal categories.

\begin{defn}
We define $\cG: \cC\to \cC^\cV\dslash \cF$ by $\cG(c) = c$ on objects and we define $\cG_{a\to b}\in \cV(\cC(a\to b)\to\cC^\cV \dslash \cF(a \to b))$ as the mate of $\varepsilon_{a\to b}$ under Adjunction \eqref{eq:adjunctionCVbar} with $v=\cC(a\to b)$.
We define the tensorator $\alpha^\cG$  by 
$$
\alpha^\cG_{a,b} = j^{\cC^\cV\dslash\cF}_{ab}: 1_\cV \to \cC^\cV\dslash\cF(\cG(ab) \to \cG(a)\cG(b))=\cC^\cV\dslash\cF(ab \to ab).
$$

We define $\cH: \cC^\cV\dslash \cF\to \cC$ by $\cH(c) = c$ on objects and we define $\cH_{a\to b}\in \cV(\cC^\cV \dslash \cF(a \to b)\to\cC(a \to b))$ as the mate of $\bar{\varepsilon}_{a\to b}$ under Adjunction \eqref{eq:adjunctionCV} with $v=\cC^\cV\dslash \cF(a\to b)$ .
We define the tensorator 
$$
\alpha^\cH_{a,b}= j^{\cC}_{ab}: 1_\cV \to \cC(\cH(ab) \to \cH(a)\cH(b))=\cC(ab \to ab).
$$

In summary, the definition of $\cG_{a\to b}$  is given by taking mates as follows:
\begin{equation}
\label{eq:MatesAndEvs}
\begin{tikzcd}
\cV\left(\cC(a\to b) \to \cC(a \to b) \right) 
\ar[r, leftrightarrow, "\cong"]
&
\cC^\cV\left(a \{a\to b\} \to b\right)
\ar[r, leftrightarrow, "\cong"]
&
\cV\left(\cC(a\to b) \to \cC^\cV \dslash \cF(a \to b) \right) 
\\[-22.5pt]
\id_{\cC(a\to b)}
\ar[r, leftrightarrow]
&
\varepsilon_{a\to b}
\ar[r, leftrightarrow]
&
\cG_{a\to b}
\end{tikzcd}
\end{equation}
and for $\cH_{a\to b}$ as:
\begin{equation*}
\begin{tikzcd}
\cV\left(\cC^\cV \dslash \cF(a\to b) \to \cC^\cV \dslash \cF(a \to b) \right) 
\ar[r, leftrightarrow, "\cong"]
&
\cC^\cV\left(a [a\to b] \to b\right)
\ar[r, leftrightarrow, "\cong"]
&
\cV\left(\cC^\cV \dslash \cF(a\to b) \to \cC(a \to b) \right) 
\\[-22.5pt]
\id_{\cC^\cV\dslash \cF (a\to b)}
\ar[r, leftrightarrow]
&
\bar{\varepsilon}_{a\to b}
\ar[r, leftrightarrow]
&
\cH_{a\to b}
\end{tikzcd}
\end{equation*}
\end{defn}

Applying Remark \ref{rem:Mates} with $\cA= \cC^\cV$, $\cB=\cV$, $\cL=\cL_a$ and $\cR=\bar{\cR}_a$, $v=\cC (a\to b)$, and $f=\cG_{a\to b}$ gives us the formula:
\begin{equation}
\label{eq:EvBarGFormula}
\begin{tikzpicture}[baseline=.9cm,smallstring]
\node (a) at (0,0) {$a$};
\node (F) at (1,0) {$\{a\to b\}$};
\node[draw,rectangle] (evb-a-b) at (1,1) {$\varepsilon_{a\to b} $};
\draw (a) to[in=-90,out=90] (evb-a-b.-135);
\draw (F) to[in=-90,out=90] (evb-a-b);
\node (b) at (1,2) {$b$};
\draw (evb-a-b) to[in=-90,out=90] (b);
\end{tikzpicture}
=
\varepsilon_{a\to b} = (\id_a \cF(\cG_{a\to b})) \circ\bar{\varepsilon}_{a\to b}
=
\begin{tikzpicture}[baseline=1.6cm,smallstring]
\node (a) at (0,0) {$a$};
\node (F) at (2,0) {$\{a\to b\}$};
\node[draw,rectangle] (H) at (2,1) {$\cF(\cG_{a\to b})$};
\draw (F) to[in=-90,out=90] (H);
\node[draw,rectangle] (evb-a-b) at (1,2.5) {$\bar{\varepsilon}_{a\to b} $};
\draw (a) to[in=-90,out=90] (evb-a-b.-135);
\draw (H) to[in=-90,out=90] (evb-a-b.-45);
\node (b) at (1,3.5) {$b$};
\draw (evb-a-b) to[in=-90,out=90] (b);
\end{tikzpicture}.
\end{equation}
Similarly, applying Remark \ref{rem:Mates} with $\cA= \cC^\cV$, $\cB=\cV$, $\cL=\cL_a$ and $\cR=\cR_a$, $v=\cC^\cV\dslash\cF (a\to b)$, and $f=\cH_{a\to b}$ gives us the formula:
\begin{equation}
\label{eq:EvBarHFormula}
\begin{tikzpicture}[baseline=.9cm,smallstring]
\node (a) at (0,0) {$a$};
\node (F) at (1,0) {$[a\to b]$};
\node[draw,rectangle] (evb-a-b) at (1,1) {$\bar{\varepsilon}_{a\to b} $};
\draw (a) to[in=-90,out=90] (evb-a-b.-135);
\draw (F) to[in=-90,out=90] (evb-a-b);
\node (b) at (1,2) {$b$};
\draw (evb-a-b) to[in=-90,out=90] (b);
\end{tikzpicture}
=
\bar{\varepsilon}_{a\to b} = (\id_a \cF(\cH_{a\to b})) \circ\varepsilon_{a\to b}
=
\begin{tikzpicture}[baseline=1.6cm,smallstring]
\node (a) at (0,0) {$a$};
\node (F) at (2,0) {$[a\to b]$};
\node[draw,rectangle] (H) at (2,1) {$\cF(\cH_{a\to b})$};
\draw (F) to[in=-90,out=90] (H);
\node[draw,rectangle] (evb-a-b) at (1,2.5) {$\varepsilon_{a\to b} $};
\draw (a) to[in=-90,out=90] (evb-a-b.-135);
\draw (H) to[in=-90,out=90] (evb-a-b.-45);
\node (b) at (1,3.5) {$b$};
\draw (evb-a-b) to[in=-90,out=90] (b);
\end{tikzpicture}.
\end{equation}

\begin{proof}[Proof of Theorem \ref{thm:FromEnrichedToFunctorAndBack-oplax}]
First, we need to check that the diagrams
\eqref{eq:CompositionAxiomForVFunctors} and \eqref{eq:TriangleIdentityAxiomForVFunctors} for $\cG$ commute.
We only show \eqref{eq:CompositionAxiomForVFunctors} as the other calculation is easier.
By Remark \ref{rem:Mates}, the mate of $\left(\cG_{a\to b}\cG_{b\to c} \right) \circ \left(-\circ_{\cC^\cV \dslash \cF} -\right)$ is given by
$$
\begin{tikzpicture}[baseline=50,smallstring]
\node (a) at (0,-1) {$a$};
\node (F) at (2,-1) {$\{a\to b; b\to c\}$};
\node[draw,rectangle] (m) at (2,1) {$\mu_{[a\to b], [b\to c]} $};
\draw[double] (F) to[in=-90,out=90] (m.270);
\node[draw,rectangle, fill=white] (G) at (2,0) {$\cF(\cG_{a\to b}\cG_{b\to c})$};
\node[draw,rectangle] (ev-a-b) at (1,2) {$\bar{\varepsilon}_{a\to b}$};
\draw (a) to[in=-90,out=90] (ev-a-b.-135);
\draw (m.135) to[in=-90,out=90] (ev-a-b.-45);
\node[draw,rectangle] (ev-b-c) at (2,3) {$\bar{\varepsilon}_{b\to c}$};
\draw (m.45) to[in=-90,out=90] (ev-b-c.-45);
\draw (ev-a-b) to[in=-90,out=90] (ev-b-c.-135);
\node (c) at (2,4) {$c$};
\draw (ev-b-c) to[in=-90,out=90] (c);
\end{tikzpicture}
=
\begin{tikzpicture}[baseline=50,smallstring]
\node (a) at (0,-1) {$a$};
\node (F) at (3,-1) {$\{a\to b; b\to c\}$};
\node[draw,rectangle] (m) at (3,0) {$\mu_{\{a\to b\}, \{b\to c\}} $};
\draw[double] (F) to[in=-90,out=90] (m.270);
\node[draw,rectangle, fill=white] (G-a-b) at (2,1) {$\cF(\cG_{a\to b})$};
\node[draw,rectangle] (ev-a-b) at (1,2) {$\bar{\varepsilon}_{a\to b}$};
\draw (a) to[in=-90,out=90] (ev-a-b.-135);
\draw (m.135) to[in=-90,out=90] (G-a-b);
\draw (G-a-b) to[in=-90,out=90] (ev-a-b.-45);
\node[draw,rectangle] (ev-b-c) at (3,3.5) {$\bar{\varepsilon}_{b\to c}$};
\draw (m.45) to[in=-90,out=90] (ev-b-c.-45);
\node[draw,rectangle, fill=white] (G-b-c) at (3,2) {$\cF(\cG_{b\to c})$};
\draw (ev-a-b) to[in=-90,out=90] (ev-b-c.-135);
\node (c) at (3,4.5) {$c$};
\draw (ev-b-c) to[in=-90,out=90] (c);
\end{tikzpicture}
=
\begin{tikzpicture}[baseline=50,smallstring]
\node (a) at (0,0) {$a$};
\node (F) at (2,0) {$\{a\to b; b\to c\}$};
\node[draw,rectangle] (m) at (2,1) {$\mu_{\{a\to b\}, \{b\to c\}} $};
\draw[double] (F) to[in=-90,out=90] (m.270);
\node[draw,rectangle] (ev-a-b) at (1,2) {${\varepsilon}_{a\to b}$};
\draw (a) to[in=-90,out=90] (ev-a-b.-135);
\draw (m.135) to[in=-90,out=90] (ev-a-b.-45);
\node[draw,rectangle] (ev-b-c) at (2,3) {$\varepsilon_{b\to c}$};
\draw (m.45) to[in=-90,out=90] (ev-b-c.-45);
\draw (ev-a-b) to[in=-90,out=90] (ev-b-c.-135);
\node (c) at (2,4) {$c$};
\draw (ev-b-c) to[in=-90,out=90] (c);
\end{tikzpicture}
=
\begin{tikzpicture}[baseline=1.6cm,smallstring]
\node (a) at (0,0) {$a$};
\node (F) at (2,0) {$\{a\to b; b\to c\}$};
\node[draw,rectangle] (H) at (2,1) {$\cF(-\circ_\cC-)$};
\draw[double] (F) to[in=-90,out=90] (H);
\node[draw,rectangle] (evb-a-b) at (1,2.5) {${\varepsilon}_{a\to b} $};
\draw (a) to[in=-90,out=90] (evb-a-b.-135);
\draw (H) to[in=-90,out=90] (evb-a-b.-45);
\node (c) at (1,3.5) {$c$};
\draw (evb-a-b) to[in=-90,out=90] (c);
\end{tikzpicture}
$$
which is the mate of $(-\circ_{\cC}-) \circ \cG_{a\to c}$.
The first equality follows by the naturality of $\mu$, the second by \eqref{eq:EvBarGFormula}, and the last from Proposition \ref{prop:MateOfComposition}.

That the tensorator $\alpha^\cG$ satisfies the naturality and associativity axioms is clear, since all the maps are identity elements.
Proving that $\cH$ is a $\cV$-monoidal functor is similar, and the proof is omitted.

Finally, it remains to show that $\cG$ and $\cH$ witness an equivalence.
We see that $\cG_{a\to b}\circ\cH_{a\to b}=\id_{\cC(a\to b)} \in \cV( \cC(a\to b)\to \cC(a\to b))$ by taking mates.
Indeed, $\mate(\cG_{a\to b}\circ\cH_{a\to b}) \in \cC^\cV(a\{a\to b\} \to b)$ is exactly the right hand side of \eqref{eq:EvBarGFormula}!
Thus it is equal to ${\varepsilon}_{a\to b}$, which is the mate of $\id_{\cC(a\to b)}$ by \eqref{eq:MatesAndEvs}.
That $\cH_{a\to b}\circ \cG_{a\to b}= \id_{\cC^\cV\dslash\cF(a\to b)}$ is similar using \eqref{eq:EvBarHFormula}.
\end{proof}

\appendix

\section{Technical details.}

\subsection{Transporting enrichment}
\label{sec:TransportingEnrichment}

Suppose $\cC$ is a $\cV$-monoidal category, and $(\cF,\mu): \cV\to \cW$ is a braided lax monoidal functor. (This means
that $(\cF,\mu)$ is lax monoidal, and also satisfies the compatibility $ \mu_{u,v}\circ \cF(\beta^\cV_{u,v}) = \beta^\cW_{\cF(u),\cF(v)}\circ \mu_{\cF(u),\cF
(v)}$ for all $u,v\in\cV$.)

We may form a $\cW$-monoidal category, denoted $\cF_*\cC$, whose objects are
the same as those of $\cC$, whose hom
objects are given by $\cF_*\cC(a \to b) = \cF(\cC(a\to b))$, and whose
composition and tensor product are given by $\mu \circ \cF
(-\circ_\cC-)$ and $\mu \circ \cF(-\otimes_\cC-)$ respectively.
Since $(\cF,\mu)$ is lax monoidal, we easily see that composition and tensor product are associative.
The interchange relation uses the compatibility with the braiding.
Indeed, we see
\begin{align*}
\begin{tikzpicture}[baseline=50,smallstring]
\node (a-b) at (0,0) {$\cF_*\cC(a \to b)$};
\node (d-e) at (3,0) {$\cF_*\cC(d \to e)$};
\node (b-c) at (6,0) {$\cF_*\cC(b \to c)$};
\node (e-f) at (9,0) {$\cF_*\cC(e \to f)$};
\node[draw,rectangle] (t1) at (1.5,2) {$\quad -\otimes_{\cF_*\cC}-\quad $};
\draw (a-b) to[in=-90,out=90] (t1.-135);
\draw (d-e) to[in=-90,out=90] (t1.-45);
\node[draw,rectangle] (t2) at (7.5,2) {$\quad -\otimes_{\cF_*\cC}-\quad $};
\draw (b-c) to[in=-90,out=90] (t2.-135);
\draw (e-f) to[in=-90,out=90] (t2.-45);
\node[draw,rectangle] (c) at (4.5,4) {$\quad - \circ _{\cF_*\cC} - \quad $};
\draw (t1) to[in=-90,out=90]  (c.-135);
\draw (t2) to[in=-90,out=90]  (c.-45);
\node (r) at (4.5,5.5) {$\cF_*\cC(ad \to cf)$};
\draw (c) -- (r);
\end{tikzpicture}
=
\begin{tikzpicture}[baseline=50,smallstring]
\node (a-b) at (0,0) {$\cF(\cC(a \to b))$};
\node (d-e) at (3,0) {$\cF(\cC(d \to e))$};
\node (b-c) at (6,0) {$\cF(\cC(b \to c))$};
\node (e-f) at (9,0) {$\cF(\cC(e \to f))$};
\node[draw,rectangle] (mu-L) at (1.5,2) {$\quad \mu\quad $};
\node[draw,rectangle] (otimes-L) at (1.5,3) {$\cF(-\otimes_{\cC}-)$};
\draw[double] (mu-L) to[in=-90,out=90]  (otimes-L);
\draw (a-b) to[in=-90,out=90] (mu-L.-135);
\draw (d-e) to[in=-90,out=90] (mu-L.-45);
\node[draw,rectangle] (mu-R) at (7.5,2) {$\quad\mu\quad$};
\node[draw,rectangle] (otimes-R) at (7.5,3) {$\cF( -\otimes_{\cC}-)$};
\draw[double] (mu-R) to[in=-90,out=90]  (otimes-R);
\draw (b-c) to[in=-90,out=90] (mu-R.-135);
\draw (e-f) to[in=-90,out=90] (mu-R.-45);
\node[draw,rectangle] (mu-top) at (4.5,5) {$\quad\mu\quad$};
\node[draw,rectangle] (circ) at (4.5,6) {$\cF( - \circ _{\cC} - )$};
\draw[double] (mu-top) to[in=-90,out=90]  (circ);
\draw (otimes-L) to[in=-90,out=90]  (mu-top.-135);
\draw (otimes-R) to[in=-90,out=90]  (mu-top.-45);
\node (top) at (4.5,7.5) {$\cF(\cC(ad \to cf))$};
\draw (circ) -- (top);
\end{tikzpicture}
\displaybreak[1]\\
=
\begin{tikzpicture}[baseline=50,smallstring]
\node (a-b) at (3,0) {
};
\node (d-e) at (4,0) {
};
\node (b-c) at (5,0) {
};
\node (e-f) at (6,0) {
};
\node[draw,rectangle] (mu-L) at (3.5,1.5) {$\quad\mu\quad $};
\node[draw,rectangle] (mu-R) at (5.5,1.5) {$\quad\mu\quad$};
\node[draw,rectangle] (mu-top) at (4.5,3) {$\quad\qquad\mu\qquad\quad$};
\node[draw,rectangle] (otimes) at (4.5,4) {$\cF\big((-\otimes_\cC-)(-\otimes_\cC-)\big)$};
\draw[quadruple={[line width=.3mm,white] in
      [line width=.9mm,black] in
      [line width=1.5mm,white] in
      [line width=2.1mm,black]}] (mu-top) to[in=-90,out=90] (otimes.270);
\node[draw,rectangle] (circ) at (4.5,5) {$\cF(-\circ_\cC-)$};
\draw[double] (otimes) to[in=-90,out=90] (circ.270);
\node (top) at (4.5,6) {
};
\draw (a-b) to[in=-90,out=90] (mu-L.-135);
\draw (d-e) to[in=-90,out=90] (mu-L.-45);
\draw (b-c) to[in=-90,out=90] (mu-R.-135);
\draw (e-f) to[in=-90,out=90] (mu-R.-45);
\draw[double] (mu-L) to[in=-90,out=90]  (mu-top.-150);
\draw[double] (mu-R) to[in=-90,out=90]  (mu-top.-30);
\draw (circ) -- (top);
\end{tikzpicture}
=
\begin{tikzpicture}[baseline=50,smallstring]
\node (a-b) at (3,0) {
};
\node (d-e) at (4,0) {
};
\node (b-c) at (5,0) {
};
\node (e-f) at (6,0) {
};
\node[draw,rectangle] (mu) at (4.5,2) {$\quad\qquad\mu\qquad\quad$};
\node[draw,rectangle] (beta) at (4.5,2.95) {$\cF\big(\id\beta\id)$};
\node[draw,rectangle] (circ) at (4.5,4) {$\cF\big((-\circ_\cC-)(-\circ_\cC-)\big)$};
\draw[quadruple={[line width=.3mm,white] in
      [line width=.9mm,black] in
      [line width=1.5mm,white] in
      [line width=2.1mm,black]}] (mu) to[in=-90,out=90] (beta.270);
\draw[quadruple={[line width=.3mm,white] in
      [line width=.9mm,black] in
      [line width=1.5mm,white] in
      [line width=2.1mm,black]}] (beta) to[in=-90,out=90] (circ.270);
\node[draw,rectangle] (otimes) at (4.5,5) {$\cF(-\otimes_\cC-)$};
\draw[double] (circ) to[in=-90,out=90] (otimes.270);
\node (top) at (4.5,6) {
};
\draw (a-b) to[in=-90,out=90] (mu.-160);
\draw (d-e) to[in=-90,out=90] (mu.-135);
\draw (b-c) to[in=-90,out=90] (mu.-45);
\draw (e-f) to[in=-90,out=90] (mu.-20);
\draw (otimes) -- (top);
\end{tikzpicture}
=
\begin{tikzpicture}[baseline=50,smallstring]
\node (a-b) at (3,0) {
};
\node (d-e) at (4,0) {
};
\node (b-c) at (5,0) {
};
\node (e-f) at (6,0) {
};
\node[draw,rectangle] (mu-lower) at (4.5,1.5) {$\qquad\mu\qquad$};
\node[draw,rectangle] (beta) at (4.5,2.5) {$\cF\big(\beta)$};
\node[draw,rectangle] (mu-upper) at (4.5,3.5) {$\quad\qquad\mu\qquad\quad$};
\node[draw,rectangle] (circ) at (4.5,4.5) {$\cF\big((-\circ_\cC-)(-\circ_\cC-)\big)$};
\draw[quadruple={[line width=.3mm,white] in
      [line width=.9mm,black] in
      [line width=1.5mm,white] in
      [line width=2.1mm,black]}] (mu-upper) to[in=-90,out=90] (circ.270);
\node[draw,rectangle] (otimes) at (4.5,5.5) {$\cF(-\otimes_\cC-)$};
\draw[double] (mu-lower) to[in=-90,out=90] (beta.270);
\draw[double] (beta) to[in=-90,out=90] (mu-upper.270);
\draw[double] (circ) to[in=-90,out=90] (otimes.270);
\node (top) at (4.5,6.5) {
};
\draw (a-b) to[in=-90,out=90] (mu-upper.-165);
\draw (d-e) to[in=-90,out=90] (mu-lower.-140);
\draw (b-c) to[in=-90,out=90] (mu-lower.-40);
\draw (e-f) to[in=-90,out=90] (mu-upper.-15);
\draw (otimes) -- (top);
\end{tikzpicture}
=
\begin{tikzpicture}[baseline=50,smallstring]
\node (a-b) at (3,0) {
};
\node (d-e) at (4,0) {
};
\node (b-c) at (5,0) {
};
\node (e-f) at (6,0) {
};
\node[draw,rectangle] (mu) at (4.5,2) {$\quad\qquad\mu\qquad\quad$};
\node[draw,rectangle] (circ) at (4.5,3) {$\cF\big((-\circ_\cC-)(-\circ_\cC-)\big)$};
\draw[quadruple={[line width=.3mm,white] in
      [line width=.9mm,black] in
      [line width=1.5mm,white] in
      [line width=2.1mm,black]}] (mu) to[in=-90,out=90] (circ.270);
\node[draw,rectangle] (otimes) at (4.5,4) {$\cF(-\otimes_\cC-)$};
\draw[double] (circ) to[in=-90,out=90] (otimes.270);
\node (top) at (4.5,5) {
};
\draw (a-b) to[in=-90,out=90] (mu.-160);
\draw[knot] (b-c) to[in=-90,out=90] (mu.-135);
\draw[knot] (d-e) to[in=-90,out=90] (mu.-45);
\draw (e-f) to[in=-90,out=90] (mu.-20);
\draw (otimes) -- (top);
\end{tikzpicture}
\displaybreak[1]\\
=
\begin{tikzpicture}[baseline=70,smallstring]
\node (a-b) at (3,1) {
};
\node (d-e) at (4,1) {
};
\node (b-c) at (5,1) {
};
\node (e-f) at (6,1) {
};
\node[draw,rectangle] (mu-L) at (3,3) {$\quad \mu\quad $};
\node[draw,rectangle] (circ-L) at (3,4) {$\cF(-\circ_{\cC}-)$};
\draw[double] (mu-L) to[in=-90,out=90]  (circ-L);
\node[draw,rectangle] (mu-R) at (6,3) {$\quad\mu\quad$};
\node[draw,rectangle] (circ-R) at (6,4) {$\cF( -\circ_{\cC}-)$};
\node[draw,rectangle] (mu-top) at (4.5,6) {$\quad\mu\quad$};
\node[draw,rectangle] (otimes) at (4.5,7) {$\cF( - \otimes _{\cC} - )$};
\node (top) at (4.5,8) {
};
\draw[double] (mu-R) to[in=-90,out=90]  (circ-R);
\draw[knot] (b-c) to[in=-90,out=90] (mu-L.-45);
\draw (e-f) to[in=-90,out=90] (mu-R.-45);
\draw (a-b) to[in=-90,out=90] (mu-L.-135);
\draw[knot] (d-e) to[in=-90,out=90] (mu-R.-135);
\draw[double] (mu-top) to[in=-90,out=90]  (otimes);
\draw (circ-L) to[in=-90,out=90]  (mu-top.-135);
\draw (circ-R) to[in=-90,out=90]  (mu-top.-45);
\draw (otimes) -- (top);
\end{tikzpicture}
=
\begin{tikzpicture}[baseline=50,smallstring]
\node (a-b) at (0,0) {$\cF_*\cC(a \to b)$};
\node (d-e) at (3,0) {$\cF_*\cC(d \to e)$};
\node (b-c) at (6,0) {$\cF_*\cC(b \to c)$};
\node (e-f) at (9,0) {$\cF_*\cC(e \to f)$};
\node[draw,rectangle] (t1) at (1.5,3) {$ -\circ_{\cF_*\cC}- $};
\node[draw,rectangle] (t2) at (7.5,3) {$ -\circ_{\cF_*\cC}- $};
\node[draw,rectangle] (c) at (4.5,5) {$ - \otimes_{\cF_*\cC} -  $};
\node (r) at (4.5,6) {$\cF_*\cC(ad \to cf)$};
\draw[knot] (b-c) to[in=-90,out=90] (t1.-45);
\draw (e-f) to[in=-90,out=90] (t2.-45);
\draw (a-b) to[in=-90,out=90] (t1.-135);
\draw[knot] (d-e) to[in=-90,out=90] (t2.-135);
\draw (t1) to[in=-90,out=90]  (c.-135);
\draw (t2) to[in=-90,out=90]  (c.-45);
\draw (c) -- (r);
\end{tikzpicture}
.
\end{align*}
\begin{example}
The functor $\cV \to \Vec$ given by $\cV(1_\cV \to -)$ is braided lax monoidal.
Transporting enrichment using this functor yields the underlying monoidal category $\cC^\cV$.
\end{example}

\subsection{Proofs for \texorpdfstring{$\cV$}{V}-natural transformations}
\label{sec:NaturalityOfNaturalTransformationComposition}

We now prove naturality of horizontal composition of $\cV$-natural transformations.

\begin{proof}[Proof of Lemma \ref{lem:HorizontalComposition}]
To verify naturality of the horizontal composition, we calculate
\begin{align*}
\begin{tikzpicture}[baseline=40,smallstring]
\node (u) at (0,0) {$u$};
\node (v) at (2,0) {$v$};
\node (a-b) at (4,0) {$\cC(a \to b)$};
\node[draw] (FH) at (0,2) {$\left(\cF \circ \cH\right)_{a \to b}$};
\node[draw] (alpha) at (2,2) {$\alpha_b$};
\node[draw] (beta) at (4,2) {$\beta_{\cG(b)}$};
\draw[knot] (a-b) to[in=-90, out=90] (FH);
\draw[knot] (u) to[in=-90, out=90] (alpha);
\draw[knot] (v) to[in=-90, out=90] (beta);
\node[draw] (H) at (2,3) {$\cH_{\cF(b) \to \cG(b)}$};
\draw (alpha) -- (H);
\node[draw] (compose1) at (3,4) {$-\circ_\cE-$};
\node[draw] (compose2) at (2,5) {$-\circ_\cE-$};
\draw (compose1) to[in=-90, out=90] (compose2.-45);
\draw (FH) to[in=-90, out=90] (compose2.-135);
\draw (H) to[in=-90, out=90] (compose1.-90);
\draw (beta) to[in=-90, out=90] (compose1.-45);
\node (target) at (2,6) {$\cE\left(\cH(\cF(a)) \to \cI(\cG(b))\right)$};
\draw (compose2) -- (target);
\end{tikzpicture}
& \overset{\text{associativity}}{=}
\begin{tikzpicture}[baseline=40,smallstring]
\node (u) at (0,0) {$u$};
\node (v) at (2,0) {$v$};
\node (a-b) at (4,0) {$\cC(a \to b)$};
\node[draw] (FH) at (0,2) {$\left(\cF \circ \cH\right)_{a \to b}$};
\node[draw] (alpha) at (2,2) {$\alpha_b$};
\node[draw] (beta) at (4,2) {$\beta_{\cG(b)}$};
\draw[knot] (a-b) to[in=-90, out=90] (FH);
\draw[knot] (u) to[in=-90, out=90] (alpha);
\draw[knot] (v) to[in=-90, out=90] (beta);
\node[draw] (H) at (2,3) {$\cH_{\cF(b) \to \cG(b)}$};
\draw (alpha) -- (H);
\node[draw] (compose1) at (1,4) {$-\circ_\cE-$};
\node[draw] (compose2) at (2,5) {$-\circ_\cE-$};
\draw (compose1) to[in=-90, out=90] (compose2.-135);
\draw (FH) to[in=-90, out=90] (compose1.-135);
\draw (H) to[in=-90, out=90] (compose1.-45);
\draw (beta) to[in=-90, out=90] (compose2.-45);
\node (target) at (2,6) {$\cE\left(\cH(\cF(a)) \to \cI(\cG(b))\right)$};
\draw (compose2) -- (target);
\end{tikzpicture}
 \overset{\text{functoriality}}{=}
\begin{tikzpicture}[baseline=40,smallstring]
\node (u) at (0,0) {$u$};
\node (v) at (2,0) {$v$};
\node (a-b) at (4,0) {$\cC(a \to b)$};
\node[draw] (F) at (0,2) {$\cF_{a \to b}$};
\node[draw] (alpha) at (2,2) {$\alpha_b$};
\node[draw] (beta) at (4,2) {$\beta_{\cG(b)}$};
\draw[knot] (a-b) to[in=-90, out=90] (F);
\draw[knot] (u) to[in=-90, out=90] (alpha);
\draw[knot] (v) to[in=-90, out=90] (beta);
\node[draw] (compose1) at (1,3) {$-\circ_\cE-$};
\draw (F) to[in=-90, out=90] (compose1.-135);
\draw (alpha) to[in=-90, out=90] (compose1.-45);
\node[draw] (H) at (1,4) {$\cH_{\cF(b) \to \cG(b)}$};
\draw (compose1) to[in=-90, out=90] (H);
\node[draw] (compose2) at (2,5) {$-\circ_\cE-$};
\draw (H) to[in=-90, out=90] (compose2.-135);
\draw (beta) to[in=-90, out=90] (compose2.-45);
\node (target) at (2,6) {};
\draw (compose2) -- (target);
\end{tikzpicture} \displaybreak[1]\\
& \overset{\text{naturality}}{=}
\begin{tikzpicture}[baseline=40,smallstring]
\node (u) at (0,0) {$u$};
\node (v) at (2,0) {$v$};
\node (a-b) at (4,0) {$\cC(a \to b)$};
\node[draw] (G) at (2,2) {$\cG_{a \to b}$};
\node[draw] (alpha) at (0,2) {$\alpha_a$};
\node[draw] (beta) at (4,2) {$\beta_{\cG(b)}$};
\draw[knot] (a-b) to[in=-90, out=90] (G);
\draw[knot] (u) to[in=-90, out=90] (alpha);
\draw[knot] (v) to[in=-90, out=90] (beta);
\node[draw] (compose1) at (1,3) {$-\circ_\cE-$};
\draw (G) to[in=-90, out=90] (compose1.-45);
\draw (alpha) to[in=-90, out=90] (compose1.-135);
\node[draw] (H) at (1,4) {$\cH_{\cF(b) \to \cG(b)}$};
\draw (compose1) to[in=-90, out=90] (H);
\node[draw] (compose2) at (2,5) {$-\circ_\cE-$};
\draw (H) to[in=-90, out=90] (compose2.-135);
\draw (beta) to[in=-90, out=90] (compose2.-45);
\node (target) at (2,6) {};
\draw (compose2) -- (target);
\end{tikzpicture} 
\overset{\text{functoriality}}{=}
\begin{tikzpicture}[baseline=40,smallstring]
\node (u) at (0,0) {$u$};
\node (v) at (2,0) {$v$};
\node (a-b) at (4,0) {$\cC(a \to b)$};
\node[draw] (G) at (4,1) {$\cG_{a \to b}$};
\node[draw] (alpha) at (0,1) {$\alpha_a$};
\node[draw] (beta) at (4,2.5) {$\beta_{\cG(b)}$};
\draw (a-b) to[in=-90, out=90] (G);
\draw (u) to[in=-90, out=90] (alpha);
\node[draw] (H1) at (0,3.5) {$\cH_{\cF(a) \to \cG(a)}$};
\node[draw] (H2) at (2,2.5) {$\cH_{\cG(a) \to \cG(b)}$};
\draw[knot] (G) to[in=-90, out=90] (H2);
\draw[knot] (v) to[in=-90, out=90] (beta.-135);
\draw (alpha) to[in=-90, out=90] (H1);
\node[draw] (compose1) at (3,4) {$-\circ_\cE-$};
\node[draw] (compose2) at (2,5) {$-\circ_\cE-$};
\draw (H2) to[in=-90, out=90] (compose1.-135);
\draw (beta) to[in=-90, out=90] (compose1.-45);
\draw (H1) to[in=-90, out=90] (compose2.-135);
\draw (compose1) to[in=-90, out=90] (compose2.-45);
\node (target) at (2,6) {};
\draw (compose2) -- (target);
\end{tikzpicture} \displaybreak[1] \\
& \overset{\text{naturality}}{=}
\begin{tikzpicture}[baseline=40,smallstring]
\node (u) at (0,0) {$u$};
\node (v) at (2,0) {$v$};
\node (a-b) at (4,0) {$\cC(a \to b)$};
\node[draw] (IG) at (4,1) {$(\cI \circ \cG)_{a \to b}$};
\node[draw] (alpha) at (0,1) {$\alpha_a$};
\node[draw] (beta) at (2,1) {$\beta_{\cG(a)}$};
\draw (a-b) to[in=-90, out=90] (IG);
\draw (u) to[in=-90, out=90] (alpha);
\node[draw] (H1) at (0,2) {$\cH_{\cF(a) \to \cG(a)}$};
\draw (v) to[in=-90, out=90] (beta);
\draw (alpha) to[in=-90, out=90] (H1);
\node[draw] (compose1) at (3,3.25) {$-\circ_\cE-$};
\node[draw] (compose2) at (2,4.5) {$-\circ_\cE-$};
\draw (IG) to[in=-90, out=90] (compose1.-45);
\draw (compose1) to[in=-90, out=90] (compose2.-45);
\draw (beta) to[in=-90, out=90] (compose1.-135);
\draw (H1) to[in=-90, out=90] (compose2.-135);
\node (target) at (2,6) {};
\draw (compose2) -- (target);
\end{tikzpicture}
\overset{\text{associativity}}{=}
\begin{tikzpicture}[baseline=40,smallstring]
\node (u) at (0,0) {$u$};
\node (v) at (2,0) {$v$};
\node (a-b) at (4,0) {$\cC(a \to b)$};
\node[draw] (IG) at (4,1) {$(\cI \circ \cG)_{a \to b}$};
\node[draw] (alpha) at (0,1) {$\alpha_a$};
\node[draw] (beta) at (2,1) {$\beta_{\cG(a)}$};
\draw (a-b) to[in=-90, out=90] (IG);
\draw (u) to[in=-90, out=90] (alpha);
\node[draw] (H1) at (0,2) {$\cH_{\cF(a) \to \cG(a)}$};
\draw (v) to[in=-90, out=90] (beta);
\draw (alpha) to[in=-90, out=90] (H1);
\node[draw] (compose1) at (1,3.25) {$-\circ_\cE-$};
\node[draw] (compose2) at (2,4.5) {$-\circ_\cE-$};
\draw (IG) to[in=-90, out=90] (compose2.-45);
\draw (compose1) to[in=-90, out=90] (compose2.-135);
\draw (beta) to[in=-90, out=90] (compose1.-45);
\draw (H1) to[in=-90, out=90] (compose1.-135);
\node (target) at (2,6) {};
\draw (compose2) -- (target);
\end{tikzpicture}.
\end{align*}

Associativity is now straightforward. If we have
$$
\begin{tikzpicture}[baseline=0]
\node (B) at (-2,0) {$\cB$};
\node (C) at (0,0) {$\cC$};
\node (D) at (2,0) {$\cD$};
\node (E) at (4,0) {$\cE$};
\draw[->] (B) to[out=-45,in=-135] node (BC1) {} node[below,xshift=-3mm] (F) {\tiny $\cF$}  (C);
\draw[->] (B) to[out=45,in=135] node (BC2) {} node[above,xshift=3mm] (G) {\tiny $\cG$}(C);
\draw[->] (C) to[out=-45,in=-135] node (CD1) {} node[below,xshift=-3mm] (H) {\tiny $\cH$}  (D);
\draw[->] (C) to[out=45,in=135] node (CD2) {} node[above,xshift=3mm] (I) {\tiny $\cI$}(D);
\draw[->] (D) to[out=-45,in=-135] node (DE1) {} node[below,xshift=-3mm] (J) {\tiny $\cJ$}  (E);
\draw[->] (D) to[out=45,in=135] node (DE2) {} node[above,xshift=3mm] (K) {\tiny $\cK$}(E);
\draw[double,arrows={-stealth}] (BC1) -- node[left] {\tiny $\kappa$} (BC2);
\draw[double,arrows={-stealth}] (CD1) -- node[left] {\tiny $\lambda$} (CD2);
\draw[double,arrows={-stealth}] (DE1) -- node[left] {\tiny $\mu$} (DE2);
\end{tikzpicture},
$$ then

\begin{align*}
(\kappa \lambda) \mu & =
\begin{tikzpicture}[smallstring,x=1cm,y=0.8cm,baseline=30]
\node[draw] (kappa)  at (0,0) {$\kappa$};
\node[draw] (lambda) at (1,0) {$\lambda$};
\node[draw] (mu)     at (2,0) {$\mu$}  ;
\node[draw] (H)      at (0,1) {$\cH$};
\node[draw] (c1)     at (0.5,2) { $- \circ -$ } ;
\node[draw] (J)      at (0.5,3) {$\cJ$};
\node[draw] (c2)     at (1,4)  { $- \circ -$ };
\draw (kappa) --++(0,-0.75);
\draw (lambda) --++(0,-0.75);
\draw (mu) --++(0,-0.75);
\draw (kappa) -- (H);
\draw (H)      to[out=90,in=-90] (c1.-135);
\draw (lambda) to[out=90,in=-90] (c1.-45);
\draw (c1) -- (J);
\draw (J)  to[out=90,in=-90] (c2.-135);
\draw (mu) to[out=90,in=-90] (c2.-45);
\draw (c2) --++(0,0.75);
\end{tikzpicture}
= 
\begin{tikzpicture}[smallstring,x=1cm,y=0.8cm,baseline=30]
\node[draw] (kappa)  at (-0.35,0) {$\kappa$};
\node[draw] (lambda) at (1,0) {$\lambda$};
\node[draw] (mu)     at (2,0) {$\mu$}  ;
\node[draw] (HJ)      at (-0.35,1) {$\cH {\circ} \cJ$};
\node[draw] (J)      at (1,1) {$\cJ$};
\node[draw] (c1)     at (1.5,3) { $- \circ -$ } ;
\node[draw] (c2)     at (1,4)  { $- \circ -$ };
\draw (kappa) --++(0,-0.75);
\draw (lambda) --++(0,-0.75);
\draw (mu) --++(0,-0.75);
\draw (kappa) -- (HJ);
\draw (lambda) to[out=90,in=-90] (J);
\draw (J)  to[out=90,in=-90] (c1.-135);
\draw (mu) to[out=90,in=-90] (c1.-45);
\draw (c1) to[out=90,in=-90] (c2.-45);
\draw (HJ)     to[out=90,in=-90] (c2.-135);
\draw (c2) --++(0,0.75);
\end{tikzpicture}
=\kappa(\lambda \mu). \qedhere
\end{align*}
\end{proof}

We now prove that $\cV$-natural transformations satisfy a braided interchange relation.

\begin{proof}[Proof of Lemma \ref{lem:BraidedInterchangeForNaturalTransformations}]
We have
\begin{align*}
\sigma_2 \circ ((\kappa \circ \mu)( \lambda \circ \nu)) & =
\begin{tikzpicture}[smallstring,x=1cm,y=1cm,baseline=30]
\node[draw] (kappa)  at (0,0) {$\kappa$};
\node[draw] (mu)     at (1,0) {$\mu$};
\node[draw] (lambda) at (2,0) {$\lambda$};
\node[draw] (nu)     at (3,0) {$\nu$};
\draw (kappa) --++(0,-1);
\draw       (mu)     to[out=-90,in=90] (2,-1);
\draw[knot] (lambda) to[out=-90,in=90] (1,-1);
\draw (nu) --++(0,-1);
\node[draw] (c1) at (0.5,1) {$ - {\circ} -$};
\node[draw] (c2) at (2.5,1) {$ - {\circ} -$};
\node[draw] (I) at (0.5,2) {$\cI$};
\node[draw] (c3) at (1.5,3) {$ - {\circ} -$};
\draw (kappa)  to[out=90,in=-90] (c1.-135);
\draw (mu)     to[out=90,in=-90] (c1.-45);
\draw (lambda) to[out=90,in=-90] (c2.-135);
\draw (nu)     to[out=90,in=-90] (c2.-45);
\draw (c1) -- (I);
\draw (c2) to[out=90,in=-90] (c3.-45);
\draw (I)  to[out=90,in=-90] (c3.-135);
\draw (c3) --++(0,0.75);
\end{tikzpicture}
\overset{\parbox{2cm}{\center \tiny functoriality \\ and \\ associativity}}{=}
\begin{tikzpicture}[smallstring,x=1cm,y=1cm,baseline=30]
\node[draw] (kappa)  at (0,-0.5) {$\kappa$};
\node[draw] (mu)     at (2,-0.5) {$\mu$};
\node[draw] (lambda) at (2,1) {$\lambda$};
\node[draw] (nu)     at (3,-0.5) {$\nu$};
\node[draw] (c) at (1.5,3) {$ - \circ - $};
\node[draw] (I1) at (0,1) {$\cI$};
\node[draw] (I2) at (1,1) {$\cI$};
\draw (kappa)  to[out=90,in=-90] (I1);
\draw (mu)     to[out=90,in=-90] (I2);
\draw (I1)     to[out=90,in=-90] (c.-150);
\draw (I2)     to[out=90,in=-90] (c.-110);
\draw (lambda) to[out=90,in=-90] (c.-70);
\draw (nu)     to[out=90,in=-90] (c.-30);
\draw (c) --++(0,0.75);
\draw (kappa) --++(0,-0.5);
\draw       (mu)     to[out=-90,in=90] (2,-1);
\draw[knot] (lambda) to[out=-90,in=90] (1,-0.3) -- (1,-1);
\draw (nu) --++(0,-0.5);
\end{tikzpicture} \displaybreak[1] \\
&
\overset{\parbox{2cm}{\tiny \center naturality \\ of $\lambda$}}{=}
\begin{tikzpicture}[smallstring,x=1cm,y=1cm,baseline=30]
\node[draw] (kappa)  at (0,-0.25) {$\kappa$};
\node[draw] (mu)     at (2,-0.25) {$\mu$};
\node[draw] (lambda) at (1,-0.25) {$\lambda$};
\node[draw] (nu)     at (3,-0.25) {$\nu$};
\node[draw] (c) at (1.5,3) {$ - \circ - $};
\node[draw] (I) at (0,1) {$\cI$};
\node[draw] (J) at (2,1) {$\cJ$};
\draw (kappa)  to[out=90,in=-90] (I);
\draw (mu)     to[out=90,in=-90] (J);
\draw (I)     to[out=90,in=-90] (c.-150);
\draw (J)     to[out=90,in=-90] (c.-70);
\draw (lambda) to[out=90,in=-90] (c.-110);
\draw (nu)     to[out=90,in=-90] (c.-30);
\draw (c) --++(0,0.75);
\draw (kappa) --++(0,-0.75);
\draw (lambda) --++(0,-0.75);
\draw (mu)     --++ (0,-0.75);
\draw (nu) --++(0,-0.75);
\end{tikzpicture}
=(\kappa \lambda)\circ( \mu \nu).
\qedhere
\end{align*}
\end{proof}

\subsection{Separate naturality conditions for \texorpdfstring{$\cV$}
{V}-monoidal functors}

The following lemma is useful for proving naturality for a $\cV$-monoidal functor.

\begin{lem}
\label{lem:SeparateNaturalityForVMonoidal}
The naturality condition \eqref{eq:NaturalityForVMonoidal} is equivalent to two separate naturality conditions, one in each factor: for all $a\in \cC$
\begin{equation}
\label{eq:LeftNaturalityForVMonoidal}
\begin{tikzcd}
\cC(a \to c)
	\ar[r, "-\otimes_\cC j_b"]
	\ar[d,"\cF_{a\to c}"]
& \cC(ab \to cb) \ar[dr, "\cF_{ab \to cb}"]	
&
\\
\cD(\cF(a)\to \cF(c)) \ar[dr, "-\otimes_\cD j_{\cF(b)}"]
& &
\cD(\cF(ab) \to \cF(cb)) \ar[d, "-\circ \alpha_{c,b}"]	  
\\
& 
\cD(\cF(a)\cF(b) \to \cF(c)\cF(b)) \ar[r, "\alpha_{a,b}\circ-"] 
&
\cD(\cF(ab) \to \cF(c)\cF(b))
\end{tikzcd}
\end{equation}
and for all $b \in \cC$
\begin{equation}
\label{eq:RightNaturalityForVMonoidal}
\begin{tikzcd}
\cC(b \to d)
	\ar[r, "j_a \otimes_\cC-"]
	\ar[d,"\cF_{b\to d}"]
& \cC(ab \to ad) \ar[dr, "\cF_{ab \to ad}"]	
&
\\
\cD(\cF(b)\to \cF(d)) \ar[dr, "j_{\cF(a)} \otimes_\cD-"]
& &
\cD(\cF(ab) \to \cF(ad)) \ar[d, "-\circ \alpha_{a,d}"]	  
\\
& 
\cD(\cF(a)\cF(b) \to \cF(a)\cF(d)) \ar[r, "\alpha_{a,b}\circ-"] 
&
\cD(\cF(ab) \to \cF(a)\cF(d)).
\end{tikzcd}
\end{equation}
\end{lem}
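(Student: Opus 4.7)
The plan is to prove the equivalence by a diagram chase: the forward implication is a routine specialization, while the reverse direction hinges on a single factorization identity coming from the braided interchange axiom.

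For the forward direction, to derive \eqref{eq:LeftNaturalityForVMonoidal} from \eqref{eq:NaturalityForVMonoidal}, I would set $d=b$ and precompose both paths of \eqref{eq:NaturalityForVMonoidal} with the morphism $\id_{\cC(a\to c)}\, j_b : \cC(a\to c)\to \cC(a\to c)\cC(b\to b)$ in $\cV$. On the top path, the composite $(\id\, j_b)\circ(-\otimes_\cC-)$ is by definition the map $-\otimes_\cC j_b$ appearing in \eqref{eq:LeftNaturalityForVMonoidal}, and the remaining steps ($\cF_{ab\to cb}$ followed by $-\circ\alpha_{c,b}$) match exactly. On the bottom path, the composite $(\id\, j_b)\circ(\cF_{a\to c}\,\cF_{b\to b})$ equals $\cF_{a\to c}\cdot(j_b^\cC\circ\cF_{b\to b})$, which by the triangle identity \eqref{eq:TriangleIdentityAxiomForVFunctors} simplifies to $\cF_{a\to c}\cdot j_{\cF(b)}^\cD$; the remainder then reproduces exactly the bottom path of \eqref{eq:LeftNaturalityForVMonoidal}. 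The derivation of \eqref{eq:RightNaturalityForVMonoidal} is entirely symmetric, setting $c=a$ and precomposing with $j_a\,\id_{\cC(b\to d)}$ instead.

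For the reverse direction, the key observation is the following factorization identity of morphisms $\cC(a\to c)\cC(b\to d)\to \cC(ab\to cd)$ in $\cV$:
$$-\otimes_\cC -\;=\; \bigl((-\otimes_\cC j_b)\,(j_c\otimes_\cC -)\bigr)\circ(-\circ_\cC -).$$
This is precisely the braided interchange \eqref{eq:BraidedInterchange} specialized at $g_1 = j_b$ and $f_2 = j_c$: by naturality of the braiding together with $\beta_{1_\cV,1_\cV}=\id_{1_\cV}$, the braiding on $j_b\, j_c$ reduces to $j_c\, j_b$, while the identity axiom of composition gives $f\circ j_c = f$ and $j_b\circ g = g$. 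The analogous factorization holds in $\cD$. Assuming both \eqref{eq:LeftNaturalityForVMonoidal} and \eqref{eq:RightNaturalityForVMonoidal}, I would substitute this factorization into the top path of \eqref{eq:NaturalityForVMonoidal} and then use the $\cV$-functoriality of $\cF$ (axiom \eqref{eq:CompositionAxiomForVFunctors}) to push $\cF$ past the composition, producing the two-factor expression $\cF(-\otimes j_b)\,\cF(j_c\otimes -)$ followed by $-\circ_\cD -$ and then $-\circ\alpha_{c,d}$. Associativity of $-\circ_\cD -$ then lets me push the postcomposition by $\alpha_{c,d}$ onto the second tensor factor, where I apply \eqref{eq:RightNaturalityForVMonoidal} to produce a postcomposition by $\alpha_{c,b}$; associativity again moves this $\alpha_{c,b}$ onto the first factor, where I apply \eqref{eq:LeftNaturalityForVMonoidal} to produce $\alpha_{a,b}\circ-$ on the outside. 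What remains is the composite $(\cF_{a\to c}\,\cF_{b\to d})$ followed by $\bigl((-\otimes j_{\cF(b)})\,(j_{\cF(c)}\otimes -)\bigr)\circ(-\circ_\cD -)$ and finally $\alpha_{a,b}\circ-$; applying the factorization identity in $\cD$ in reverse collapses the middle piece to $-\otimes_\cD -$, reproducing exactly the bottom path of \eqref{eq:NaturalityForVMonoidal}.

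The main obstacle I anticipate is purely bookkeeping: ensuring that the various $\alpha$'s are pushed across the composition in the correct order and onto the correct tensor factor before each of \eqref{eq:LeftNaturalityForVMonoidal} and \eqref{eq:RightNaturalityForVMonoidal} is applied, and that the unitality and triangle-identity simplifications occur at the right steps. Once the factorization identity from the braided interchange is in place, the rest of the argument is essentially mechanical.
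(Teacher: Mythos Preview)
Your proposal is correct and follows essentially the same route as the paper. The paper's proof also obtains the separate conditions by precomposing with $\id\, j_b$ and $j_a\,\id$, and for the converse it inserts identity elements and applies the braided interchange in $\cC$ (which is precisely your factorization identity), then uses functoriality of $\cF$, associativity, applies \eqref{eq:RightNaturalityForVMonoidal} and then \eqref{eq:LeftNaturalityForVMonoidal}, and finally undoes the interchange and identities in $\cD$; your packaging of the first and last steps as a single ``factorization identity'' is a clean way to phrase exactly what the paper does step by step in string diagrams.
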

\begin{proof}
The naturality condition \eqref{eq:NaturalityForVMonoidal} gives the pair of conditions \eqref{eq:LeftNaturalityForVMonoidal} and \eqref{eq:RightNaturalityForVMonoidal} by taking $b=d$ and precomposing with $1 j_b$,
or by taking  $a=c$ and precomposing with $j_a 1$.
We give a terse calculation that \eqref{eq:NaturalityForVMonoidal} follows from \eqref{eq:LeftNaturalityForVMonoidal} and \eqref{eq:RightNaturalityForVMonoidal}, without fully labelling all the boxes.
\begin{align*}
\begin{tikzpicture}[baseline=32,smallstring]
\node (ac) at (0,0) {$\cC(a \to c)$};
\node (bd) at (2,0) {$\cC(b \to d)$};
\node[draw] (ab-cd) at (1,1.5) {$- \otimes_\cC -$};
\draw (ac) to[in=-90,out=90] (ab-cd.-135);
\draw (bd) to[in=-90,out=90] (ab-cd.-45);
\node[draw] (F) at (1,2.5) {$\cF$};
\draw (ab-cd) -- (F);
\node[draw] (compose) at (1.5,3.5) {$- \circ_\cD -$};
\node[draw] (alpha) at (2,2.5) {$\alpha$};
\draw (F) to[in=-135,out=90] (compose);
\draw (alpha) to[in=-45,out=90] (compose);
\node (top) at (1.5,5) {$\cD(\cF(ab) \to \cF(c) \cF(d)$};
\draw (compose) -- (top);
\end{tikzpicture}
& \overset{\text{identities}}{=}
\begin{tikzpicture}[baseline,smallstring]
\node (ac) at (-1,-1.5) {$\cC(a \to c)$};
\node (bd) at (3,-1.5) {$\cC(b \to d)$};
\node[draw] (j1) at (0.5,-0.6) {$j$};
\node[draw] (j2) at (1.5,-0.6) {$j$};
\node[draw] (c1) at (-0.25,0.5) {$-\circ_\cC-$};
\node[draw] (c2) at (2.25,0.5) {$-\circ_\cC-$};
\node[draw] (ab-cd) at (1,1.5) {$- \otimes_\cC -$};
\draw (ac) to[in=-90,out=90] (c1.-135);
\draw (j1) to[in=-90,out=90] (c1.-45);
\draw (bd) to[in=-90,out=90] (c2.-45);
\draw (j2) to[in=-90,out=90] (c2.-135);
\draw (c1) to[in=-90,out=90] (ab-cd.-135);
\draw (c2) to[in=-90,out=90] (ab-cd.-45);
\node[draw] (F) at (1,2.5) {$\cF$};
\draw (ab-cd) -- (F);
\node[draw] (compose) at (1.5,3.5) {$- \circ_\cD -$};
\node[draw] (alpha) at (2,2.5) {$\alpha$};
\draw (F) to[in=-135,out=90] (compose);
\draw (alpha) to[in=-45,out=90] (compose);
\draw (compose) -- ++(0,1);
\end{tikzpicture}
\overset{\text{interchange}}{=}
\begin{tikzpicture}[baseline,smallstring]
\node (ac) at (-1,-1.5) {$\cC(a \to c)$};
\node (bd) at (3,-1.5) {$\cC(b \to d)$};
\node[draw] (j1) at (0.5,-0.9) {$j$};
\node[draw] (j2) at (1.5,-0.9) {$j$};
\node[draw] (c1) at (-0.25,0.5) {$-\otimes_\cC-$};
\node[draw] (c2) at (2.25,0.5) {$-\otimes_\cC-$};
\node[draw] (ab-cd) at (1,1.5) {$- \circ_\cC -$};
\draw (ac) to[in=-90,out=90] (c1.-135);
\draw[knot] (j1) to[in=-90,out=90] (c2.-135);
\draw (bd) to[in=-90,out=90] (c2.-45);
\draw[knot] (j2) to[in=-90,out=90] (c1.-45);
\draw (c1) to[in=-90,out=90] (ab-cd.-135);
\draw (c2) to[in=-90,out=90] (ab-cd.-45);
\node[draw] (F) at (1,2.5) {$\cF$};
\draw (ab-cd) -- (F);
\node[draw] (compose) at (1.5,3.5) {$- \circ_\cD -$};
\node[draw] (alpha) at (2,2.5) {$\alpha$};
\draw (F) to[in=-135,out=90] (compose);
\draw (alpha) to[in=-45,out=90] (compose);
\draw (compose) -- ++(0,1);
\end{tikzpicture} \displaybreak[1]\\
&
\overset{\text{functoriality}}{=}
\begin{tikzpicture}[baseline,smallstring]
\node (ac) at (-1,-1.5) {$\cC(a \to c)$};
\node (bd) at (3,-1.5) {$\cC(b \to d)$};
\node[draw] (j1) at (0.5,-0.6) {$j$};
\node[draw] (j2) at (1.5,-0.6) {$j$};
\node[draw] (c1) at (-0.25,0.5) {$-\otimes_\cC-$};
\node[draw] (c2) at (2.25,0.5) {$-\otimes_\cC-$};
\draw (ac) to[in=-90,out=90] (c1.-135);
\draw (j1) to[in=-90,out=90] (c1.-45);
\draw (bd) to[in=-90,out=90] (c2.-45);
\draw (j2) to[in=-90,out=90] (c2.-135);
\node[draw] (F1) at (-0.25,1.5) {$\cF$};
\node[draw] (F2) at (2.25,1.5) {$\cF$};
\draw (c1) to[in=-90,out=90] (F1);
\draw (c2) to[in=-90,out=90] (F2);
\node[draw] (ab-cd) at (1,2.5) {$- \circ_\cD -$};
\draw (F1) to[in=-90,out=90] (ab-cd.-135);
\draw (F2) to[in=-90,out=90] (ab-cd.-45);
\node[draw] (compose) at (1.5,3.5) {$- \circ_\cD -$};
\node[draw] (alpha) at (2.5,2.5) {$\alpha$};
\draw (F) to[in=-135,out=90] (compose);
\draw (alpha) to[in=-45,out=90] (compose);
\draw (compose) -- ++(0,1);
\end{tikzpicture}
\overset{\text{associativity}}{=}
\begin{tikzpicture}[baseline,smallstring]
\node (ac) at (-1,-1.5) {$\cC(a \to c)$};
\node (bd) at (3,-1.5) {$\cC(b \to d)$};
\node[draw] (j1) at (0.5,-0.6) {$j$};
\node[draw] (j2) at (1.5,-0.6) {$j$};
\node[draw] (c1) at (-0.25,0.5) {$-\otimes_\cC-$};
\node[draw] (c2) at (2.25,0.5) {$-\otimes_\cC-$};
\draw (ac) to[in=-90,out=90] (c1.-135);
\draw (j1) to[in=-90,out=90] (c1.-45);
\draw (bd) to[in=-90,out=90] (c2.-45);
\draw (j2) to[in=-90,out=90] (c2.-135);
\node[draw] (F1) at (-0.25,1.5) {$\cF$};
\node[draw] (F2) at (2.25,1.5) {$\cF$};
\draw (c1) to[in=-90,out=90] (F1);
\draw (c2) to[in=-90,out=90] (F2);
\node[draw] (ab-cd) at (2.75,2.5) {$- \circ_\cD -$};
\node[draw] (compose) at (1.5,3.5) {$- \circ_\cD -$};
\draw (F1) to[in=-90,out=90] (compose.-135);
\draw (F2) to[in=-90,out=90] (ab-cd.-135);
\node[draw] (alpha) at (3.5,1.5) {$\alpha$};
\draw (ab-cd) to[in=-90,out=90] (compose.-45);
\draw (alpha) to[in=-45,out=90] (ab-cd.-45);
\draw (compose) -- ++(0,1);
\end{tikzpicture} \displaybreak[1]\\
& \overset{\text{Equation \eqref{eq:RightNaturalityForVMonoidal}}}{=}
\begin{tikzpicture}[baseline,smallstring]
\node (ac) at (-1,-1.5) {$\cC(a \to c)$};
\node (bd) at (3,-1.5) {$\cC(b \to d)$};
\node[draw] (j1) at (0.5,-0.6) {$j$};
\node[draw] (j2) at (1.5,-0.6) {$j$};
\node[draw] (c1) at (-0.25,0.5) {$-\otimes_\cC-$};
\node[draw] (c2) at (2.5,0.75) {$-\otimes_\cC-$};
\draw (ac) to[in=-90,out=90] (c1.-135);
\draw (j1) to[in=-90,out=90] (c1.-45);
\draw (j2) to[in=-90,out=90] (c2.-135);
\node[draw] (F1) at (-0.25,1.5) {$\cF$};
\node[draw] (F2) at (3,-0.5) {$\cF$};
\draw (bd) to[in=-90,out=90] (F2);
\draw (c1) to[in=-90,out=90] (F1);
\node[draw] (ab-cd) at (2.25,2) {$- \circ_\cD -$};
\draw (c2) to[in=-90,out=90] (ab-cd.-45);
\node[draw] (compose) at (1.5,3.5) {$- \circ_\cD -$};
\draw (F1) to[in=-90,out=90] (compose.-135);
\draw (F2) to[in=-90,out=90] (c2.-45);
\node[draw] (alpha) at (1.25,0.75) {$\alpha$};
\draw (ab-cd) to[in=-90,out=90] (compose.-45);
\draw (alpha) to[in=-90,out=90] (ab-cd.-135);
\draw (compose) -- ++(0,1);
\end{tikzpicture}
\overset{\text{associativity}}{=}
\begin{tikzpicture}[baseline,smallstring]
\node (ac) at (-1,-1.5) {$\cC(a \to c)$};
\node (bd) at (3,-1.5) {$\cC(b \to d)$};
\node[draw] (j1) at (0.5,-0.6) {$j$};
\node[draw] (j2) at (1.5,-0.6) {$j$};
\node[draw] (c1) at (-0.25,0.5) {$-\otimes_\cC-$};
\node[draw] (c2) at (2.5,0.75) {$-\otimes_\cC-$};
\draw (ac) to[in=-90,out=90] (c1.-135);
\draw (j1) to[in=-90,out=90] (c1.-45);
\draw (j2) to[in=-90,out=90] (c2.-135);
\node[draw] (F1) at (-0.25,1.5) {$\cF$};
\node[draw] (F2) at (3,-0.5) {$\cF$};
\draw (bd) to[in=-90,out=90] (F2);
\draw (c1) to[in=-90,out=90] (F1);
\node[draw] (ab-cd) at (0.75,2.5) {$- \circ_\cD -$};
\node[draw] (compose) at (1.5,3.5) {$- \circ_\cD -$};
\draw (c2) to[in=-90,out=90] (compose.-45);
\draw (F1) to[in=-90,out=90] (ab-cd.-135);
\draw (F2) to[in=-90,out=90] (c2.-45);
\node[draw] (alpha) at (1.25,0.75) {$\alpha$};
\draw (ab-cd) to[in=-90,out=90] (compose.-135);
\draw (alpha) to[in=-90,out=90] (ab-cd.-45);
\draw (compose) -- ++(0,1);
\end{tikzpicture} \displaybreak[1]\\
& \overset{\text{Equation \eqref{eq:LeftNaturalityForVMonoidal}}}{=}
\begin{tikzpicture}[baseline,smallstring]
\node (ac) at (-1,-1.5) {$\cC(a \to c)$};
\node (bd) at (3,-1.5) {$\cC(b \to d)$};
\node[draw] (j1) at (0.5,-0.6) {$j$};
\node[draw] (j2) at (1.5,-0.6) {$j$};
\node[draw] (c1) at (-0.25,0.75) {$-\otimes_\cC-$};
\node[draw] (c2) at (2.5,0.75) {$-\otimes_\cC-$};
\draw (j1) to[in=-90,out=90] (c1.-45);
\draw (j2) to[in=-90,out=90] (c2.-135);
\node[draw] (F1) at (-1,-0.5) {$\cF$};
\node[draw] (F2) at (3,-0.5) {$\cF$};
\draw (ac) to[in=-90,out=90] (F1);
\draw (bd) to[in=-90,out=90] (F2);
\draw (c1) to[in=-90,out=90] (ab-cd.-45);
\node[draw] (ab-cd) at (0.75,2.5) {$- \circ_\cD -$};
\node[draw] (compose) at (1.5,3.5) {$- \circ_\cD -$};
\draw (c2) to[in=-90,out=90] (compose.-45);
\draw (F1) to[in=-90,out=90] (c1.-135);
\draw (F2) to[in=-90,out=90] (c2.-45);
\node[draw] (alpha) at (-1,1.5) {$\alpha$};
\draw (ab-cd) to[in=-90,out=90] (compose.-135);
\draw (alpha) to[in=-90,out=90] (ab-cd.-135);
\draw (compose) -- ++(0,1);
\end{tikzpicture}
\overset{\text{associativity}}{=}
\begin{tikzpicture}[baseline,smallstring]
\node (ac) at (-1,-1.5) {$\cC(a \to c)$};
\node (bd) at (3,-1.5) {$\cC(b \to d)$};
\node[draw] (j1) at (0.5,-0.6) {$j$};
\node[draw] (j2) at (1.5,-0.6) {$j$};
\node[draw] (c1) at (-0.25,0.75) {$-\otimes_\cC-$};
\node[draw] (c2) at (2.5,0.75) {$-\otimes_\cC-$};
\draw (j1) to[in=-90,out=90] (c1.-45);
\draw (j2) to[in=-90,out=90] (c2.-135);
\node[draw] (F1) at (-1,-0.5) {$\cF$};
\node[draw] (F2) at (3,-0.5) {$\cF$};
\draw (ac) to[in=-90,out=90] (F1);
\draw (bd) to[in=-90,out=90] (F2);
\node[draw] (ab-cd) at (1.5,2) {$- \circ_\cD -$};
\node[draw] (compose) at (1,3.5) {$- \circ_\cD -$};
\draw (c1) to[in=-90,out=90] (ab-cd.-135);
\draw (c2) to[in=-90,out=90] (ab-cd.-45);
\draw (F1) to[in=-90,out=90] (c1.-135);
\draw (F2) to[in=-90,out=90] (c2.-45);
\node[draw] (alpha) at (-1,2) {$\alpha$};
\draw (ab-cd) to[in=-90,out=90] (compose.-45);
\draw (alpha) to[in=-90,out=90] (compose.-135);
\draw (compose) -- ++(0,1);
\end{tikzpicture}
\displaybreak[1]\\
& \overset{\text{interchange}}{=}
\begin{tikzpicture}[baseline,smallstring]
\node (ac) at (-1,-1.5) {$\cC(a \to c)$};
\node (bd) at (3,-1.5) {$\cC(b \to d)$};
\node[draw] (j1) at (0.5,-0.6) {$j$};
\node[draw] (j2) at (1.5,-0.6) {$j$};
\node[draw] (c1) at (-0.25,0.75) {$-\circ_\cD-$};
\node[draw] (c2) at (2.5,0.75) {$-\circ_\cD-$};
\draw[knot] (j2) to[in=-90,out=90] (c1.-45);
\draw[knot] (j1) to[in=-90,out=90] (c2.-135);
\node[draw] (F1) at (-1,-0.5) {$\cF$};
\node[draw] (F2) at (3,-0.5) {$\cF$};
\draw (ac) to[in=-90,out=90] (F1);
\draw (bd) to[in=-90,out=90] (F2);
\node[draw] (ab-cd) at (1.5,2) {$- \otimes_\cD -$};
\node[draw] (compose) at (1,3.5) {$- \circ_\cD -$};
\draw (c1) to[in=-90,out=90] (ab-cd.-135);
\draw (c2) to[in=-90,out=90] (ab-cd.-45);
\draw (F1) to[in=-90,out=90] (c1.-135);
\draw (F2) to[in=-90,out=90] (c2.-45);
\node[draw] (alpha) at (-1,2) {$\alpha$};
\draw (ab-cd) to[in=-90,out=90] (compose.-45);
\draw (alpha) to[in=-90,out=90] (compose.-135);
\draw (compose) -- ++(0,1);
\end{tikzpicture}
\overset{\text{identities}}{=}
\begin{tikzpicture}[baseline,smallstring]
\node (ac) at (1,-1.5) {$\cC(a \to c)$};
\node (bd) at (3,-1.5) {$\cC(b \to d)$};
\node[draw] (F1) at (1,-0.5) {$\cF$};
\node[draw] (F2) at (3,-0.5) {$\cF$};
\draw (ac) to[in=-90,out=90] (F1);
\draw (bd) to[in=-90,out=90] (F2);
\node[draw] (ab-cd) at (2,0.75) {$- \otimes_\cD -$};
\node[draw] (compose) at (1,2) {$- \circ_\cD -$};
\draw (F1) to[in=-90,out=90] (ab-cd.-135);
\draw (F2) to[in=-90,out=90] (ab-cd.-45);
\node[draw] (alpha) at (0,0.75) {$\alpha$};
\draw (ab-cd) to[in=-90,out=90] (compose.-45);
\draw (alpha) to[in=-90,out=90] (compose.-135);
\node (top) at (1,3.5) {$\cD(\cF(ab) \to \cF(c) \cF(d)$};
\draw (compose) -- (top);
\end{tikzpicture}.
\qedhere
\end{align*}
\end{proof}

\section{Proofs using adjunctions and mates}
\label{sec:ProofsAdjunctionsAndMates}

\noindent
\textbf{Lemma \ref{lem:MateOfId - aFv}}
\textit{
The mate of $\id_{a\cF(v)}\in \cC^\cV(a\cF(v)\to a\cF(v))$ is given by $$(j_a \eta_v) \circ (-\otimes_\cC -)\in \cV(v\to
\cC(a\to a\cF(v))).$$
}
\begin{proof}
We begin with $\id_{a\cF(v)}\in \cC^\cV(a\cF(v)\to a\cF(v))$ and perform the first isomorphism in Adjunction \eqref{eq:La-Adjunction} to obtain $(\coev_a\id_{\cF(v)})(-\otimes_\cC-)\in\cC^\cV(\cF(v)\to a^*a\cF(v))$.
Applying the next isomorphism gives us its mate 
$$
\begin{tikzpicture}[baseline=50,smallstring]
\node (v) at (2,0) {$v$};
\node (1-2) at (4,0) {$\iV$};
\node (1-3) at (6,0) {$\iV$};
\node[draw,rectangle] (eta-v) at (2,1) {$\eta_{v}$};
\node[draw,rectangle] (j-Fv-1) at (6,1) {$j_{\cF(v)}$};
\node[draw,rectangle] (coev-a) at (4,1) {$\coev_{a}$};
\node[draw,rectangle] (otimes-1) at (5,2) {$-\otimes_\cC -$};
\node[draw,rectangle] (circ-1) at (4,3) {$-\circ_\cC -$};
\node (1-a*aFv) at (4,4) {$\cC(1_\cC\to a^*a\cF(v))$};
\draw (v) to[in=-90,out=90] (eta-v);
\draw (circ-1) to[in=-90,out=90] (1-a*aFv);
\draw (eta-v) to[in=-90,out=90] (circ-1.-135);
\draw (otimes-1) to[in=-90,out=90] (circ-1.-45);
\draw (coev-a) to[in=-90,out=90] (otimes-1.-135);
\draw (j-Fv-1) to[in=-90,out=90] (otimes-1.-45);
\end{tikzpicture}
\in \cV(v\to \cC(1_\cC\to a^*a\cF(v)))
$$
under Adjunction \ref{eq:TraceAdjunction}, which we computed using Remark \ref{rem:Mates}.
We now apply Frobenius reciprocity in $\cC$ to obtain the following map in $\cV(v\to \cC(a\to a\cF(v)))$,
which we need to show is equal to $(j_a\eta_v)(-\otimes_\cC-)$:
$$
\begin{tikzpicture}[baseline=50,smallstring]
\node (1-1) at (0,0) {$\iV$};
\node (v) at (2,0) {$v$};
\node (1-2) at (4,0) {$\iV$};
\node (1-3) at (6,0) {$\iV$};
\node (1-4) at (8,0) {$\iV$};
\node (1-5) at (10,0) {$\iV$};
\node (1-6) at (12,0) {$\iV$};
\node[draw,rectangle] (j-a-1) at (0,1) {$j_a$};
\node[draw,rectangle] (eta-v) at (2,1) {$\eta_{v}$};
\node[draw,rectangle] (j-Fv-1) at (6,1) {$j_{\cF(v)}$};
\node[draw,rectangle] (coev-a) at (4,1) {$\coev_{a}$};
\node[draw,rectangle] (ev-a) at (8,1) {$\ev_a$};
\node[draw,rectangle] (j-a-2) at (10,1) {$j_{a}$};
\node[draw,rectangle] (j-Fv-2) at (12,1) {$j_{\cF(v)}$};
\node[draw,rectangle] (otimes-1) at (5,2) {$-\otimes_\cC -$};
\node[draw,rectangle] (otimes-2) at (11,2) {$-\otimes_\cC -$};
\node[draw,rectangle] (otimes-3) at (10,3) {$-\otimes_\cC -$};
\node[draw,rectangle] (otimes-4) at (3,4) {$-\otimes_\cC -$};
\node[draw,rectangle] (circ-1) at (4,3) {$-\circ_\cC -$};
\node[draw,rectangle] (circ-2) at (6,6) {$-\circ_\cC -$};
\node (a-aFv) at (6,7) {$\cC(a\to a\cF(v))$};
\draw (v) to[in=-90,out=90] (eta-v);
\draw (j-a-1) to[in=-90,out=90] (otimes-4.-135);
\draw (circ-1) to[in=-90,out=90] (otimes-4.-45);
\draw (eta-v) to[in=-90,out=90] (circ-1.-135);
\draw (otimes-1) to[in=-90,out=90] (circ-1.-45);
\draw (coev-a) to[in=-90,out=90] (otimes-1.-135);
\draw (j-Fv-1) to[in=-90,out=90] (otimes-1.-45);
\draw (otimes-4) to[in=-90,out=90] (circ-2.-135);
\draw (otimes-3) to[in=-90,out=90] (circ-2.-45);
\draw (otimes-2) to[in=-90,out=90] (otimes-3.-45);
\draw (ev-a) to[in=-90,out=90] (otimes-3.-135);
\draw (j-a-2) to[in=-90,out=90] (otimes-2.-135);
\draw (j-Fv-2) to[in=-90,out=90] (otimes-2.-45);
\draw (circ-2) to[in=-90,out=90] (a-aFv);
\end{tikzpicture}
.
$$
Applying the exchange relation allows us to simplify the lower right side.
We then use associativity of tensor product in two places to obtain
$$
\begin{tikzpicture}[baseline=50,smallstring]
\node (1-1) at (2,0) {$\iV$};
\node (1-2) at (4,0) {$\iV$};
\node (v) at (6,0) {$v$};
\node (1-3) at (8,0) {$\iV$};
\node (1-4) at (10,0) {$\iV$};
\node (1-5) at (12,0) {$\iV$};
\node[draw,rectangle] (j-a-1) at (2,1) {$j_a$};
\node[draw,rectangle] (coev-a) at (4,1) {$\coev_{a}$};
\node[draw,rectangle] (eta-v) at (6,1) {$\eta_{v}$};
\node[draw,rectangle] (ev-a) at (8,1) {$\ev_a$};
\node[draw,rectangle] (j-a-2) at (10,1) {$j_{a}$};
\node[draw,rectangle] (j-Fv) at (12,1) {$j_{\cF(v)}$};
\node[draw,rectangle] (otimes-1) at (3,2) {$-\otimes_\cC -$};
\node[draw,rectangle] (otimes-2) at (9,2) {$-\otimes_\cC -$};
\node[draw,rectangle] (otimes-3) at (10,3) {$-\otimes_\cC -$};
\node[draw,rectangle] (otimes-4) at (4,3) {$-\otimes_\cC -$};
\node[draw,rectangle] (circ) at (7,5) {$-\circ_\cC -$};
\node (a-aFv) at (7,6) {$\cC(a\to a\cF(v))$};
\draw (v) to[in=-90,out=90] (eta-v);
\draw (j-a-1) to[in=-90,out=90] (otimes-1.-135);
\draw (eta-v) to[in=-90,out=90] (otimes-4.-45);
\draw (otimes-1) to[in=-90,out=90] (otimes-4.-135);
\draw (coev-a) to[in=-90,out=90] (otimes-1.-45);
\draw (otimes-4) to[in=-90,out=90] (circ.-135);
\draw (otimes-3) to[in=-90,out=90] (circ.-45);
\draw (otimes-2) to[in=-90,out=90] (otimes-3.-135);
\draw (ev-a) to[in=-90,out=90] (otimes-2.-135);
\draw (j-a-2) to[in=-90,out=90] (otimes-2.-45);
\draw (j-Fv) to[in=-90,out=90] (otimes-3.-45);
\draw (circ) to[in=-90,out=90] (a-aFv);
\end{tikzpicture}
.
$$
We are now in the position to use the interchange relation again, obtaining
$$
\begin{tikzpicture}[baseline=50,smallstring]
\node (1-1) at (2,0) {$\iV$};
\node (1-2) at (4,0) {$\iV$};
\node (1-3) at (6,0) {$\iV$};
\node (1-4) at (8,0) {$\iV$};
\node (v) at (10,0) {$v$};
\node (1-5) at (12,0) {$\iV$};
\node[draw,rectangle] (j-a-1) at (2,1) {$j_a$};
\node[draw,rectangle] (coev-a) at (4,1) {$\coev_{a}$};
\node[draw,rectangle] (ev-a) at (6,1) {$\ev_a$};
\node[draw,rectangle] (j-a-2) at (8,1) {$j_{a}$};
\node[draw,rectangle] (eta-v) at (10,1) {$\eta_{v}$};
\node[draw,rectangle] (j-Fv) at (12,1) {$j_{\cF(v)}$};
\node[draw,rectangle] (otimes-1) at (3,2) {$-\otimes_\cC -$};
\node[draw,rectangle] (otimes-2) at (7,2) {$-\otimes_\cC -$};
\node[draw,rectangle] (circ-L) at (5,3.5) {$-\circ_\cC -$};
\node[draw,rectangle] (circ-R) at (11,2.5) {$-\circ_\cC -$};
\node[draw,rectangle] (otimes) at (7,5) {$-\otimes_\cC -$};
\node (a-aFv) at (7,6) {$\cC(a\to a\cF(v))$};
\draw (v) to[in=-90,out=90] (eta-v);
\draw (eta-v) to[in=-90,out=90] (circ-R.-135);
\draw (j-Fv) to[in=-90,out=90] (circ-R.-45);
\draw (circ-R) to[in=-90,out=90] (circ.-45);
\draw (j-a-1) to[in=-90,out=90] (otimes-1.-135);
\draw (coev-a) to[in=-90,out=90] (otimes-1.-45);
\draw (ev-a) to[in=-90,out=90] (otimes-2.-135);
\draw (j-a-2) to[in=-90,out=90] (otimes-2.-45);
\draw (otimes-1) to[in=-90,out=90] (circ-L.-135);
\draw (otimes-2) to[in=-90,out=90] (circ-L.-45);
\draw (circ-L) to[in=-90,out=90] (otimes.-135);
\draw (otimes) to[in=-90,out=90] (a-aFv);
\end{tikzpicture}
.
$$
Now using the zig-zag relation and simplifying, we have exactly $(j_a\eta_v)\circ(-\otimes_\cC-)$.
\end{proof}

\noindent
\textbf{Proposition \ref{prop:MateOfComposition}}
\textit{
The mate of the composition map $(-\circ-): \cC(a\to b)\cC(b\to c) \to \cC(a\to c)$ under Adjunction \eqref{eq:La-Adjunction}
with $v= \cC(a\to b)\cC(b\to c)$ and $b=c$ is given by
$$
\begin{tikzpicture}[baseline=50,smallstring]
\node (a) at (0,0) {$a$};
\node (F) at (2,0) {$\{a\to b; b\to c\}$};
\node[draw,rectangle] (m) at (2,1) {$\mu_{\{a\to b\}, \{b\to c\}} $};
\draw[double] (F) to[in=-90,out=90] (m.270);
\node[draw,rectangle] (ev-a-b) at (1,2) {$\varepsilon_{a\to b}$};
\draw (a) to[in=-90,out=90] (ev-a-b.-135);
\draw (m.135) to[in=-90,out=90] (ev-a-b.-45);
\node[draw,rectangle] (ev-b-c) at (2,3) {$\varepsilon_{b\to c}$};
\draw (m.45) to[in=-90,out=90] (ev-b-c.-45);
\draw (ev-a-b) to[in=-90,out=90] (ev-b-c.-135);
\node (c) at (2,4) {$c$};
\draw (ev-b-c) to[in=-90,out=90] (c);
\end{tikzpicture}
.
$$
}
\begin{proof}
By Corollary \ref{cor:MateUsingTrace}, the mate of 
the diagram in the statement of Proposition \ref{prop:MateOfComposition} is given by
$$
\begin{tikzpicture}[baseline=50,smallstring]
\node (1-1) at (0,0) {$\iV$};
\node (a-b) at (2,0) {$\cC(a\to b)$};
\node (b-c) at (4,0) {$\cC(b\to c)$};
\node (1-2) at (6,0) {$\iV$};
\node (1-3) at (8,0) {$\iV$};
\node (1-4) at (10,0) {$\iV$};
\node[draw,rectangle] (j-a) at (0,1) {$j_a$};
\node[draw,rectangle] (eta-a-b) at (2,1) {$\eta_{\cC(a\to b)}$};
\node[draw,rectangle] (eta-b-c) at (4,1) {$\eta_{\cC(b\to c)}$};
\node[draw,rectangle] (ev-a-b) at (6,1) {$\varepsilon_{a\to b}$};
\node[draw,rectangle] (j-b-c) at (8,1) {$j_{\{b\to c\}}$};
\node[draw,rectangle] (ev-b-c) at (10,1) {$\varepsilon_{b\to c}$};
\node[draw,rectangle] (otimes-1) at (1,2) {$-\otimes_\cC -$};
\node[draw,rectangle] (otimes-2) at (2,3) {$-\otimes_\cC -$};
\node[draw,rectangle] (otimes-3) at (7,2) {$-\otimes_\cC -$};
\node[draw,rectangle] (circ-1) at (4,4.5) {$-\circ_\cC -$};
\node[draw,rectangle] (circ-2) at (5,5.5) {$-\circ_\cC -$};
\node (a-c) at (5,6.5) {$\cC(a\to c)$};
%
\draw (a-b) to[in=-90,out=90] (eta-a-b);
\draw (b-c) to[in=-90,out=90] (eta-b-c);
%
\draw (j-a) to[in=-90,out=90] (otimes-1.-135);
\draw (eta-a-b) to[in=-90,out=90] (otimes-1.-45);
\draw (otimes-1) to[in=-90,out=90] (otimes-2.-135);
\draw (eta-b-c) to[in=-90,out=90] (otimes-2.-45);
\draw (ev-a-b) to[in=-90,out=90] (otimes-3.-135);
\draw (j-b-c) to[in=-90,out=90] (otimes-3.-45);
\draw (otimes-3) to[in=-90,out=90] (circ-1.-45);
\draw (otimes-2) to[in=-90,out=90] (circ-1.-135);
\draw (circ-1) to[in=-90,out=90] (circ-2.-135);
\draw (ev-b-c) to[in=-90,out=90] (circ-2.-45);
\draw (circ-2) to[in=-90,out=90] (a-c);
\end{tikzpicture}
.
$$
We may apply the interchange relation to obtain the map
$$
\begin{tikzpicture}[baseline=50,smallstring]
\node (1-1) at (0,0) {$\iV$};
\node (a-b) at (2,0) {$\cC(a\to b)$};
\node (b-c) at (6,0) {$\cC(b\to c)$};
\node (1-2) at (4,0) {$\iV$};
\node (1-3) at (8,0) {$\iV$};
\node (1-4) at (10,0) {$\iV$};
\node[draw,rectangle] (j-a) at (0,1) {$j_a$};
\node[draw,rectangle] (eta-a-b) at (2,1) {$\eta_{\cC(a\to b)}$};
\node[draw,rectangle] (eta-b-c) at (6,1) {$\eta_{\cC(b\to c)}$};
\node[draw,rectangle] (ev-a-b) at (4,1) {$\varepsilon_{a\to b}$};
\node[draw,rectangle] (j-b-c) at (8,1) {$j_{\{b\to c\}}$};
\node[draw,rectangle] (ev-b-c) at (10,1) {$\varepsilon_{b\to c}$};
\node[draw,rectangle] (otimes-1) at (1,2) {$-\otimes_\cC -$};
\node[draw,rectangle] (circ-1) at (2,3) {$-\circ_\cC -$};
\node[draw,rectangle] (circ-2) at (7,2) {$-\circ_\cC -$};
\node[draw,rectangle] (otimes-2) at (4,4.5) {$-\otimes_\cC -$};
\node[draw,rectangle] (circ-3) at (5,5.5) {$-\circ_\cC -$};
\node (a-c) at (5,6.5) {$\cC(a\to c)$};
%
\draw (a-b) to[in=-90,out=90] (eta-a-b);
\draw (b-c) to[in=-90,out=90] (eta-b-c);
%
\draw (j-a) to[in=-90,out=90] (otimes-1.-135);
\draw (eta-a-b) to[in=-90,out=90] (otimes-1.-45);
\draw (otimes-1) to[in=-90,out=90] (circ-1.-135);
\draw (eta-b-c) to[in=-90,out=90] (circ-2.-135);
\draw (ev-a-b) to[in=-90,out=90] (circ-1.-45);
\draw (j-b-c) to[in=-90,out=90] (circ-2.-45);
\draw (otimes-3) to[in=-90,out=90] (otimes-2.-45);
\draw (otimes-2) to[in=-90,out=90] (circ-3.-135);
\draw (circ-1) to[in=-90,out=90] (otimes-2.-135);
\draw (ev-b-c) to[in=-90,out=90] (circ-3.-45);
\draw (circ-3) to[in=-90,out=90] (a-c);
\end{tikzpicture}
.
$$
We may now apply Corollary \ref{cor:Simplify-j-eta-ev} and simplify to obtain
$$
\begin{tikzpicture}[baseline=50,smallstring]
\node (a-b) at (0,0) {$\cC(a\to b)$};
\node (b-c) at (2,0) {$\cC(b\to c)$};
\node (1) at (4,0) {$\iV$};
\node[draw,rectangle] (eta-b-c) at (2,1) {$\eta_{\cC(b\to c)}$};
\node[draw,rectangle] (ev-b-c) at (4,1) {$\varepsilon_{b\to c}$};
\node[draw,rectangle] (otimes) at (1,2) {$-\otimes_\cC -$};
\node[draw,rectangle] (circ) at (2,3) {$-\circ_\cC -$};
\node (a-c) at (2,5.5) {$\cC(a\to c)$};
\draw (b-c) to[in=-90,out=90] (eta-b-c);
\draw (a-b) to[in=-90,out=90] (otimes.-135);
\draw (eta-b-c) to[in=-90,out=90] (otimes.-45);
\draw (otimes-1) to[in=-90,out=90] (circ.-135);
\draw (ev-b-c) to[in=-90,out=90] (circ.-45);
\draw (circ) to[in=-90,out=90] (a-c);
\end{tikzpicture}
=
\begin{tikzpicture}[baseline=50,smallstring]
\node (a-b) at (0,0) {$\cC(a\to b)$};
\node (1-1) at (2,0) {$\iV$};
\node (1-2) at (4,0) {$\iV$};
\node (b-c) at (6,0) {$\cC(b\to c)$};
\node (1-3) at (8,0) {$\iV$};
\node[draw,rectangle] (j-1) at (2,1) {$j_{1_\cC}$};
\node[draw,rectangle] (j-b) at (4,1) {$j_b$};
\node[draw,rectangle] (eta-b-c) at (6,1) {$\eta_{\cC(b\to c)}$};
\node[draw,rectangle] (ev-b-c) at (8,1) {$\varepsilon_{b\to c}$};
\node[draw,rectangle] (circ-L) at (2,2.5) {$-\circ_\cC -$};
\node[draw,rectangle] (circ-R) at (4,2.5) {$-\circ_\cC -$};
\node[draw,rectangle] (otimes) at (3,3.5) {$-\otimes_\cC -$};
\node[draw,rectangle] (circ) at (4,4.5) {$-\circ_\cC -$};
\node (a-c) at (4,5.5) {$\cC(a\to c)$};
\draw (b-c) to[in=-90,out=90] (eta-b-c);
\draw (a-b) to[in=-90,out=90] (circ-L.-135);
\draw[knot] (j-b) to[in=-90,out=90] (circ-L.-45);
\draw[knot] (j-1) to[in=-90,out=90] (circ-R.-135);
\draw (eta-b-c) to[in=-90,out=90] (circ-R.-45);
\draw (circ-L) to[in=-90,out=90] (otimes.-135);
\draw (circ-R) to[in=-90,out=90] (otimes.-45);
\draw (otimes) to[in=-90,out=90] (circ.-135);
\draw (ev-b-c) to[in=-90,out=90] (circ.-45);
\draw (circ) to[in=-90,out=90] (a-c);
\end{tikzpicture}$$
since we may add identity elements to the diagram at no cost.
Now applying the interchange relation and using associativity of composition, we obtain
$$
\begin{tikzpicture}[baseline=50,smallstring]
\node (a-b) at (0,0) {$\cC(a\to b)$};
\node (1-1) at (2,0) {$\iV$};
\node (1-2) at (3,0) {$\iV$};
\node (b-c) at (5,0) {$\cC(b\to c)$};
\node (1-3) at (7,0) {$\iV$};
\node[draw,rectangle] (j-1) at (2,1) {$j_{1_\cC}$};
\node[draw,rectangle] (j-b) at (3,1) {$j_b$};
\node[draw,rectangle] (eta-b-c) at (5,1) {$\eta_{\cC(b\to c)}$};
\node[draw,rectangle] (ev-b-c) at (7,1) {$\varepsilon_{b\to c}$};
\node[draw,rectangle] (otimes-L) at (1,2) {$-\otimes_\cC -$};
\node[draw,rectangle] (otimes-R) at (4,2) {$-\otimes_\cC -$};
\node[draw,rectangle] (circ-1) at (5,3) {$-\circ_\cC -$};
\node[draw,rectangle] (circ) at (4,4) {$-\circ_\cC -$};
\node (a-c) at (4,5) {$\cC(a\to c)$};
\draw (b-c) to[in=-90,out=90] (eta-b-c);
\draw (a-b) to[in=-90,out=90] (otimes-L.-135);
\draw[knot] (j-b) to[in=-90,out=90] (otimes-R.-135);
\draw[knot] (j-1) to[in=-90,out=90] (otimes-L.-45);
\draw (eta-b-c) to[in=-90,out=90] (otimes-R.-45);
\draw (otimes-L) to[in=-90,out=90] (circ.-135);
\draw (otimes-R) to[in=-90,out=90] (circ-1.-135);
\draw (circ-1) to[in=-90,out=90] (circ.-45);
\draw (ev-b-c) to[in=-90,out=90] (circ-1.-45);
\draw (circ) to[in=-90,out=90] (a-c);
\end{tikzpicture}
=
\begin{tikzpicture}[baseline=50,smallstring]
\node (a-b) at (0,0) {$\cC(a\to b)$};
\node (b-c) at (2,0) {$\cC(b\to c)$};
\node[draw,rectangle] (circ) at (1,2) {$-\circ_\cC -$};
\node (a-c) at (1,4) {$\cC(a\to c)$};
\draw (a-b) to[in=-90,out=90] (circ.-135);
\draw (b-c) to[in=-90,out=90] (circ.-45);
\draw (circ) to[in=-90,out=90] (a-c);
\end{tikzpicture}
$$
by applying Corollary \ref{cor:Simplify-j-eta-ev} again.
\end{proof}

\subsection*{Acknowledgements}
This project began at the 2016 Trimester on von Neumann algebras at the Hausdorff Institute for Mathematics (HIM) and
the 2016 Noncommutative Geometry and Operator Algebras Spring Institute.
The authors would like to thank HIM and the organizers for their support during this time. 
This project continued at the 2016 AIM SQuaRE on Classifying fusion categories.
The authors would like to thank AIM for their support and hospitality.

The authors would like to thank 
Marcel Bischoff,
Andr\'{e} Henriques, 
Corey Jones,
Steve Lack,
Zhengwei Liu,
Julia Plavnik,
Noah Snyder,
Ross Street,
James Tener, and
Kevin Walker
for helpful conversations.
Scott Morrison was partially supported by Australian Research Council grants
`Subfactors and symmetries' DP140100732 and `Low dimensional categories' DP160103479.
David Penneys was partially supported by NSF DMS grant 1500387/1655912.


\setlength{\bibitemsep}{0pt}
\raggedright
\printbibliography

\end{document}